\newtheorem{The}{Theorem}[section]
\newtheorem{Pro}{Proposition}[section]
\newtheorem{Lem}{Lemma}[section]
\newtheorem{Cor}{Corollary}[section]
\newtheorem{Def}{Definition}[section]
\newtheorem{Exa}{Example}[section]
\newtheorem{Rem}{Remark}[section]
\numberwithin{equation}{section}
\numberwithin{The}{section}
\newenvironment {proof} {\noindent {\bf Proof.}}{\quad $\square$\par\vspace{3mm}}
\begin{document}

\title{ On some properties of three different types of triangular blocked tensors
\thanks{This work was supported by the NSF of China (Grant Nos. 11231004, 11271288 and 11571123)
and the NSF of Guangdong Provincial (Grant No. 2015A030313377).}}
\author{Jiayu Shao$^{a,}$\footnote{{\it{Email address:\;}}jyshao@tongji.edu.cn}
\qquad Lihua You$^{b,}$\footnote{{\it{Corresponding author:\;}}ylhua@scnu.edu.cn}}
\vskip.2cm
\date{{\small
$^{a}$ Department of Mathematics, Tongji University, Shanghai, 200092, P.R. China\\
$^{b}$ School of Mathematical Sciences, South China Normal University, Guangzhou, \\
510631, P.R. China\\
}}
\maketitle

%\author{Jiayu Shao
%\thanks{Department of Mathematics, Tongji University, Shanghai, China,
%\textit{Email address: jyshao@tongji.edu.cn}}
%\ Lihua You\thanks{Department of Mathematics, South China Normal University, Guangzhou, China;
%\textit{E-mail address: ylhua@scnu.edu.cn }} }
%\maketitle

\begin{abstract}

We define three types of upper (and lower) triangular blocked tensors, which are all generalizations of the triangular blocked matrices. We study some basic properties and characterizations of these three types of triangular blocked tensors. We obtain the formulas for the determinants, characteristic polynomials and spectra of the first and second type triangular blocked tensors, and give an example to show that these formulas no longer hold for the third type triangular blocked tensors. We prove that the product of any two $(n_1,\cdots,n_r)$-upper (or lower) triangular blocked tensors of the first or second or third type is still an $(n_1,\cdots,n_r)$-upper (or lower) triangular blocked tensor of the same type. We also prove that, if an $(n_1,\cdots,n_r)$-upper triangular blocked tensor of the first or second or third type has a left $k$-inverse, then its unique left $k$-inverse is still an $(n_1,\cdots,n_r)$-upper triangular blocked tensor of the same type. Also if it has a right $k$-inverse, then all of its right $k$-inverses are still $(n_1,\cdots,n_r)$-upper triangular blocked tensors of the same type. By showing that the left $k$-inverse (if any) of a weakly irreducible nonsingular $M$-tensor is a positive tensor, we show that the left $k$-inverse (if any) of a first or second or third type canonical $(n_1,\cdots,n_r)$-upper triangular blocked nonsingular $M$-tensor is an $(n_1,\cdots,n_r)$-upper triangular blocked tensor of the same type all of whose diagonal blocks are positive tensors.  We also show that every order $m$ dimension $n$ tensor is permutation similar to some third type normal upper triangular blocked tensor (all of whose diagonal blocks are irreducible). We give an example to show that this is not true for the first type canonical upper triangular blocked tensor.

\textbf{AMS classification: }\textit{15A42, 15A18, 15A69}

\textbf{Keywords:}\textit{ tensor, triangular blocked tensor, product, inverse, determinant, $M$-tensor }

\end{abstract}

\section{Introduction}
\hskip0.6cm In recent years, the study of tensors and the spectra of tensors (and hypergraphs) with their various applications has attracted
extensive attention and interest, since the work of L.Qi (\cite{2005Q}) and L.H.Lim (\cite{2005L}) in 2005.

As was in \cite{2005Q}, an order $m$ dimension $n$ tensor $\mathbb {A}=(a_{i_1i_2\cdots i_m})_{1\le i_j\le n \ (j=1,\cdots ,m)}$ over the
complex field $\mathbb {C}$ is a multidimensional array with all entries
$a_{i_1i_2\cdots i_m}\in \mathbb {C} \  \ (i_1,\cdots ,i_m\in [n]=\{1,\cdots ,n\})$.

In this paper, we define and study three different types of the general $(n_1,\cdots,n_r)$-upper (and lower) triangular blocked tensors.

It is well known that the triangular blocked matrices are very important and useful in the study and applications of matrices.
For tensors, Hu et al \cite{2013HHLQ} gave a determinant formula for the special case $r=2$ of some type of the $(n_1,\cdots,n_r)$-triangular blocked tensors. Also, Shao et al \cite{2013SSZ} defined a type of lower triangular blocked tensors which is essentially equivalent to the second type upper triangular blocked tensors defined in this paper (see Theorem \ref{The25} of this paper for the proof). Shao et al \cite{2013SSZ} also studied some other basic properties of such type triangular blocked tensors.

Recently, Hu, Huang and Qi in \cite{2014HHQ} introduced and studied the ``nonnegative tensor partition" which is also essentially equivalent to the second type  triangular blocked tensors defined in this paper (up to a permutation similarity). They obtained that every order $m$ dimension $n$ tensor is permutation similar to such a type of upper triangular blocked tensor each of whose diagonal blocks are weakly irreducible (also see Proposition 1 of \cite{2016HQ}). Hu and Qi \cite{2016HQ} also used this ``nonnegative tensor partition" to study some spectral properties of nonnegative tensors, and obtain a necessary and sufficient condition for a nonnegative tensor to have a positive eigenvector.

In this paper, we first give the definitions of three different types of the general $(n_1,\cdots,n_r)$-upper (and lower)
triangular blocked tensors, which are all the natural generalizations of the $(n_1,\cdots,n_r)$-upper (and lower) triangular
blocked matrices. Then we study some properties of these three types of triangular blocked tensors.

In some sense, the first and second type upper triangular blocked tensors defined in this paper are generalizations of the weakly reducible tensors (see Definition 1.1 below) which corresponds to the case $r=2$ (with two diagonal blocks) in Definitions 2.1 and 2.2 up to a permutation similarity (also see Remarks 2.1 and 2.2 in \S 2), while the third type upper triangular blocked tensors defined in this paper is a generalization of the reducible tensors (see Definition 1.1 below) which corresponds to the case $r=2$ in Definitions 2.3 up to a permutation similarity (also see Remark 2.4 in \S 2).

\vskip 0.1cm

We first study some basic properties and characterizations of these three types of triangular blocked tensors in \S 2.
Then in \S 3, we obtain the formulas for the determinants, characteristic polynomials and spectra of the first two types of
triangular blocked tensors, and give an example to show that these formulas no longer hold for the third type triangular blocked tensors.
We prove in \S 4 that the product of any two $(n_1,\cdots,n_r)$ upper (or lower) triangular blocked tensors of the first or second or third type
is still an $(n_1,\cdots,n_r)$ upper (or lower) triangular blocked tensor of the same type.

We also prove in \S 5 that, if an $(n_1,\cdots,n_r)$ upper (or lower) triangular blocked tensor of the first or second or third type has a
left $k$-inverse, then its unique left $k$-inverse is still an $(n_1,\cdots,n_r)$ upper (or lower) triangular blocked tensor of all the three types.
Also if it has a right $k$-inverse, then all of its right $k$-inverses are still
$(n_1,\cdots,n_r)$ upper (or lower) triangular blocked tensors of all the three types.
Furthermore, by showing (in Theorem \ref{The53}) that the left $k$-inverse (if any) of a weakly irreducible nonsingular $M$-tensor is a positive tensor, we show that the left $k$-inverse (if any) of a first or second or third type canonical $(n_1,\cdots,n_r)$-upper (or lower) triangular
blocked nonsingular $M$-tensor is an $(n_1,\cdots,n_r)$-upper (or lower) triangular blocked tensor of all the three types all of
whose diagonal blocks are positive tensors.

\vskip 0.08cm

In \S 6, we show that every order $m$ dimension $n$ tensor is permutation similar to some third type normal upper triangular blocked tensor
(all of whose diagonal blocks are irreducible). On the other hand, we also give an example to show that not every tensor can be permutational
similar to some first type normal upper triangular blocked tensor (all of whose diagonal blocks are weakly irreducible).

\vskip 0.1cm

Now we introduce some basic concepts of tensors which are relevant to the main results of this paper.

\noindent
\begin{Def}\label{Def11}{\rm(\cite{2008CPZ, 2013FGH, 2011YY})} %([Friedland+Yang-Yang+张恭庆等三人])
Let $\mathbb {A}$ be an order $m$ dimension $n$ tensor.

\noindent {\rm(1)} If there exists a proper subset $I$ of the set $[n]$ such that
\begin{equation}\label{eq11}
a_{i_1i_2\cdots i_m}=0 \quad (\forall \ i_1\in I, \  \mbox {and all of the}  \ i_2,\cdots, i_m\notin I),  %\eqno {(1.1)}$$
\end{equation}

\noindent then $\mathbb {A}$ is called reducible (or sometimes $I$-reducible). If $\mathbb {A}$ is not reducible,
then $\mathbb{A}$ is called irreducible.
\vskip 0.1cm

\noindent {\rm(2)}  If there exists a proper subset $I$ of the set $[n]$ such that
\begin{equation}\label{eq12}
a_{i_1i_2\cdots i_m}=0 \quad (\forall \ i_1\in I, \  \mbox {and at least one of the}  \ i_2,\cdots, i_m\notin I), %\eqno {(1.2)}$$
\end{equation}

\noindent then $\mathbb {A}$ is called weakly reducible (or sometimes $I$-weakly reducible).
If $\mathbb {A}$ is not weakly reducible, then $\mathbb{A}$ is called weakly irreducible.
\end{Def}%\vskip 0.1cm

The following is an equivalent definition of the determinant of an order $m$ dimension $n$ tensor with $m\ge 2$
(see \cite{2013HHLQ} and \cite{2013SSZ}).

\vskip 0.1cm

\begin{Def}\label{Def12}{\rm(\cite{2013HHLQ, 2013SSZ})} Let $\mathbb {A}$ be an order $m$ dimension $n$ tensor with $m\ge 2$.
Then $\det(\mathbb {A})$ is the unique polynomial on the entries of $\mathbb {A}$ satisfying the following three conditions:

\vskip 0.01cm

\noindent {\rm(1)} $\det(\mathbb {A})=0$ if and only if the system of homogeneous equations $\mathbb {A}x=0$ has a nonzero solution.

\vskip 0.01cm

\noindent {\rm(2)} $\det(\mathbb {I})=1$, where $\mathbb {I}$ is the unit tensor.

\vskip 0.01cm

\noindent {\rm(3)} $\det(\mathbb {A})$ is an irreducible polynomial on the entries of $\mathbb {A}$,
when the entries $a_{i_1\cdots i_m} \ (1\le i_1,\cdots, i_m\le n)$ of $\mathbb {A}$ are all viewed as independent distinct variables.
\end{Def}

By using the definition of determinants, we can define the characteristic polynomial $\phi_{\mathbb {A}}(\lambda)$ of a tensor $\mathbb {A}$
to be the determinant $\det(\lambda \mathbb {I}- \mathbb {A})$, where $\mathbb {I}$ is the unit tensor. The spectrum of the tensor
$\mathbb {A}$ is defined to be the multi-set of the roots of the characteristic polynomial of $\mathbb {A}$.

The following definitions about the product of tensors and the permutation similarity of tensors can be found in \cite{2013S}.

\begin{Def}\label{Def13}{\rm(\cite{2013S})}
Let $\mathbb {A}$ (and $\mathbb {B}$) be an order $m\ge 2$ (and order $k\ge 1$), dimension $n$ tensor, respectively.
Define the product $\mathbb {A}\mathbb {B}$ to be the following tensor $\mathbb {C}$ of order $(m-1)(k-1)+1$ and dimension $n$:
\begin{equation}\label{eq13}
c_{i\alpha_1\cdots \alpha_{m-1}}=\sum_{i_2,\cdots ,i_m=1}^n a_{ii_2\cdots i_m}b_{i_2\alpha_1}\cdots b_{i_m\alpha_{m-1}}
\quad  (i\in [n], \  \alpha_1, \cdots ,\alpha_{m-1}\in [n]^{k-1}). %\eqno {(1.3)}$$
\end{equation}
\end{Def}

\begin{Def}\label{Def13}{\rm(\cite{2013S})} Let $\mathbb{A}$ and $\mathbb {B}$ be two order $m$ dimension $n$ tensors.
If there exists a permutation matrix $P$ such that $\mathbb {B}=P\mathbb {A}P^T$,
then we say that $\mathbb {A}$ and $\mathbb {B}$ are permutational similar. \end{Def}

It was proved in \cite{2013S}  that similar (and thus permutational similar) tensors have the same determinants,
same characteristic polynomials and the same spectra.

Let $I$ be a nonempty subset of the set $[n]$. Then the (principal) subtensor $\mathbb{A}[I]$ of $\mathbb{A}$ is the
subtensor consisting of those entries $a_{i_1i_2\cdots i_m}$ of $\mathbb{A}$ all of whose subscripts $i_1,i_2,\cdots, i_m$ are in $I$.

\section{The definitions of the three types of triangular blocked tensors and their basic properties}
\hskip.6cm We first recall the definition of the triangular blocked matrices.

Let $A$ be a matrix of order $n$. Let $n_1,\cdots,n_r \ (r\ge 2)$ be positive integers with $n_1+\cdots+n_r=n$. Write $S_0=0$ and
$$S_j=n_1+\cdots+n_j, \qquad  I_j=\{S_{j-1}+1,\cdots,S_j\} \quad (j=1,\cdots,r).$$
If for any $j=2,\cdots,r$, we have
$$a_{ii_2}=0 \quad (\forall i\in I_j, \ \mbox {and} \ i_2\le S_{j-1}), $$
then we say that $A$ is an $(n_1,\cdots,n_r)$-upper triangular blocked matrix.

$$\left(\begin{array}{ccccc}
A_{1}&\cdots &\ast &\cdots &\ast \\
\vdots&\ddots &\vdots &\cdots &\ast \\
O&\cdots &A_{j}&\cdots &\ast \\
\vdots&\cdots &\vdots &\ddots &\vdots \\
O&\cdots &O &\cdots &A_r \\
\end{array} \right)
$$
In the following, we want to generalize this definition from matrices to tensors.

\subsection{The definitions of the three types of triangular blocked tensors}

\hskip.6cm In this subsection, we give the definitions of the three types of triangular blocked tensors. Here the definition of our second type upper triangular blocked tensors can be proved to be equivalent to the Definition 4.3 of \cite{2013SSZ} on a type of lower triangular blocked tensors  (see Theorem \ref{The25} of this paper), and is also essentially equivalent to the ``nonnegative tensor partition" defined and studied by Hu, Huang and Qi in \cite{2014HHQ}, and by Hu and Qi in \cite{2016HQ} (up to a permutation similarity).

\begin{Def}\label{Def21}
Let $\mathbb {A}$ be an order $m$ and dimension $n$ tensor, $n_1,\cdots,n_r \ (r\ge 2)$ be positive integers with $n_1+\cdots+n_r=n$.
Write $S_0=0$ and
$$S_j=n_1+\cdots+n_j, \qquad  I_j=\{S_{j-1}+1,\cdots,S_j\} \quad (j=1,\cdots,r).$$

\noindent {\rm(1)} If for any $j=2,\cdots,r$, we have
\begin{equation}\label{eq21}
a_{ii_2\cdots i_m}=0 \quad (\forall i\in I_j, \ \mbox {and} \ \min\{i_2,\cdots,i_m\}\le S_{j-1}), %\eqno {(2.1)}$$
\end{equation}
then we say that $\mathbb {A}$ is a  first type $(n_1,\cdots,n_r)$-upper triangular blocked tensor, or $(n_1,\cdots,n_r)$-1stUTB tensor (or simply $(n_1,\cdots,n_r)$-UTB tensor) with diagonal
blocks $\mathbb {A}_1,\cdots,\mathbb {A}_r$, where $\mathbb {A}_j=\mathbb {A}[I_j]$ is the $j$-th diagonal block of this UTB tensor.

\noindent  {\rm (Here the condition $\min\{i_2,\cdots,i_m\}\le S_{j-1}$ means that at least one index of $i_2,\cdots,i_m$ belongs to $I_1\cup\cdots\cup I_{j-1}$.)}

\vskip 0.1cm

\noindent {\rm(2)} If for any $j=1,\cdots,r-1$, we have
\begin{equation}\label{eq22}
a_{ii_2\cdots i_m}=0 \quad (\forall i\in I_j, \ \mbox {and} \ \max\{i_2,\cdots,i_m\}\ge S_{j}+1), %\eqno {(2.2)}$$
\end{equation}
then we say that $\mathbb {A}$ is a first type $(n_1,\cdots,n_r)$-lower triangular blocked tensor (or simply $(n_1,\cdots,n_r)$-LTB tensor) with diagonal blocks $\mathbb {A}_1,\cdots,\mathbb {A}_r$.

\noindent {\rm(3)} If for any $j=1,\cdots,r$, we have
\begin{equation}\label{eq23}
a_{ii_2\cdots i_m}=0 \quad (\forall i\in I_j, \ \mbox {and at least one of } \ i_2,\cdots,i_m \notin I_j), %\eqno {(2.3)}$$
\end{equation}
then we say that $\mathbb {A}$ is an $(n_1,\cdots,n_r)$-diagonal blocked tensor with diagonal blocks $\mathbb {A}_1,\cdots,\mathbb {A}_r$.
\end{Def}

There of course might be other possible ways to define the triangular blocked tensors. For example,
the following (second type) definition is equivalent to Definition 4.3 in \cite{2013SSZ}  (see Theorem \ref{The25} below), and is also essentially equivalent to the ``nonnegative tensor partition" defined and studied by Hu, Huang and Qi in \cite{2014HHQ}, and by Hu and Qi in \cite{2016HQ} (up to a permutation similarity).

\begin{Def}\label{Def22}  Let $\mathbb {A}$ be an order $m$ and dimension $n$ tensor,
$n_1,\cdots,n_r \ (r\ge 2)$ be positive integers with $n_1+\cdots+n_r=n$. Write $S_0=0$ and
$$S_j=n_1+\cdots+n_j, \qquad  I_j=\{S_{j-1}+1,\cdots,S_j\} \quad (j=1,\cdots,r).$$

\noindent  {\rm(1)} If for any $j=2,\cdots,r$, we have
\begin{equation}\label{eq24}
a_{ii_2\cdots i_m}=0 \quad (\forall i\in I_j, \ \min\{i_2,\cdots,i_m\}\le S_{j-1} \  \mbox {and} \ \max\{i_2,\cdots,i_m\}\le S_{j}), %\eqno {(2.4)}$$
\end{equation}
then $\mathbb {A}$ is called a second type $(n_1,\cdots,n_r)$-upper triangular blocked tensor (or 2ndUTB tensor).

\noindent {\rm(Here the condition $\min\{i_2,\cdots,i_m\}\le S_{j-1}$ means that at least one index of $i_2,\cdots,i_m$ belongs to $I_1\cup\cdots\cup I_{j-1}$, while the condition $\max\{i_2,\cdots,i_m\}\le S_{j}$ means that all indices of $i_2,\cdots,i_m$ belong to $I_1\cup\cdots\cup I_{j}$.)}

\noindent {\rm(2)} If for any $j=1,\cdots,r-1$, we have
\begin{equation}\label{eq25}
a_{ii_2\cdots i_m}=0 \quad (\forall i\in I_j, \ \max\{i_2,\cdots,i_m\}\ge S_{j}+1 \  \mbox {and} \ \min\{i_2,\cdots,i_m\}\ge S_{j-1}+1), %\eqno {(2.5)}$$
\end{equation}
then $\mathbb {A}$ is called a second type $(n_1,\cdots,n_r)$-lower triangular blocked tensor (or 2ndLTB tensor).
\end{Def}

\begin{Rem}\label{Rem21}
 From Definitions \ref{Def21} and \ref{Def22} we can see that, in the case $r=2$, and thus $j=2$ is the only case for $2\le j\le r$,
 we see that the condition $\max\{i_2,\cdots,i_m\}\le S_j=S_r=n$ holds automatically. So in this case $r=2$,
 the $(n_1,n_2)$-UTB tensors and $(n_1,n_2)$-2ndUTB tensors are the same.
\end{Rem}

\begin{Rem}\label{Rem22}
Also it is easy to see that, in the case $r=2$, if $\mathbb {A}$ is an $(n_1,n_2)$-UTB (and 2ndUTB) tensor,
then $\mathbb {A}$ is $I_2$-weakly reducible. On the other hand,
a tensor $\mathbb{A}$ of dimension $n$ is weakly reducible if and only if there exists some integer $k$ with $1\le k\le n-1$
such that $\mathbb{A}$ is permutation similar to some $(k,n-k)$-UTB (and 2ndUTB) tensor.
\end{Rem}

\begin{Exa}\label{Exa21}
 For the special case $r=2$, $n_1=k$ and $n_2=n-k \ (1\le k\le n-1)$, we have that $\mathbb {A}$ is a $(k,n-k)$-UTB (and 2ndUTB) tensor
 if and only if the following condition holds:
\begin{equation}\label{eq26} a_{ii_2\cdots i_m}=0 \quad (\forall i>k, \ \mbox {and} \ \min\{i_2,\cdots,i_m\}\le k).
\end{equation}
\end{Exa}%\eqno {(2.6)}$$
\begin{proof}  Since $r=2$, we only need to check that (\ref{eq21}) holds for the case $j=2$. Now in this case we have
$i\in I_2\Longleftrightarrow i>k$, and $\min\{i_2,\cdots,i_m\}<S_{j-1}+1=k+1\Longleftrightarrow \min\{i_2,\cdots,i_m\}\le k$.
So in this case (\ref{eq21}) is equivalent to (\ref{eq26}).
\end{proof}

Now we define the third type triangular blocked tensors.

\begin{Def}\label{Def23} Let $\mathbb {A}$ be an order $m$ and dimension $n$ tensor,
$n_1,\cdots,n_r \ (r\ge 2)$ be positive integers with $n_1+\cdots+n_r=n$. Write $S_0=0$ and
$$S_j=n_1+\cdots+n_j, \qquad  I_j=\{S_{j-1}+1,\cdots,S_j\} \quad (j=1,\cdots,r).$$

\noindent  {\rm(1)} If for any $j=2,\cdots,r$, we have
\begin{equation}\label{eq27} a_{ii_2\cdots i_m}=0 \quad (\forall i\in I_j, \ \mbox {and} \ \max\{i_2,\cdots,i_m\}\le S_{j-1}), \end{equation} %\eqno {(2.7)}$$
then we say that $\mathbb {A}$ is a third type $(n_1,\cdots,n_r)$-upper triangular blocked tensor (or 3rdUTB tensor).

\noindent  {\rm(Here the condition $\max\{i_2,\cdots,i_m\}\le S_{j-1}$ means that all indices of $i_2,\cdots,i_m$ belong to $I_1\cup\cdots\cup I_{j-1}$.)}

\vskip 0.1cm

\noindent  {\rm(2)}  If for any $j=1,\cdots,r-1$, we have
\begin{equation}\label{eq28} a_{ii_2\cdots i_m}=0 \quad (\forall i\in I_j, \ \mbox {and} \ \min\{i_2,\cdots,i_m\}\ge S_{j}+1), \end{equation} % \eqno {(2.8)}$$
then we say that $\mathbb {A}$ is a third type $(n_1,\cdots,n_r)$-lower triangular blocked tensor (or 3rdLTB tensor).
\end{Def}

\begin{Rem}\label{Rem23}  It is easy to see from Definition 2.3 that, in the case $r=2$,
if $\mathbb {A}$ is an $(n_1,n_2)$-3rdUTB tensor, then $\mathbb {A}$ is $I_2$-reducible. On the other hand,
a tensor $\mathbb{A}$ of dimension $n$ is reducible if and only if there exists some integer $k$ with $1\le k\le n-1$ such that $\mathbb{A}$
is permutation similar to some $(k,n-k)$-3rdUTB tensor.
\end{Rem}

Now we give some more remarks on the definitions of the above three types of upper triangular blocked tensors.

\begin{Rem}\label{Rem24}

\noindent  {\rm(1)}  It is easy to see from the definitions that an $(n_1,\cdots,n_r)$-UTB tensor is always an $(n_1,\cdots,n_r)$- 2ndUTB tensor,
and an $(n_1,\cdots,n_r)$-2ndUTB tensor is always an $(n_1,\cdots,n_r)$- 3rdUTB tensor.

Also it is easy to see that, in this case $m=2$, all the three types of $(n_1,\cdots,n_r)$-UTB tensors are the same.

\noindent  {\rm(2)} In case of $m=2$, namely if the tensor $\mathbb{A}$ is a matrix of order $n$,
then our definitions of all the three types are equivalent to the definition of upper (or lower) triangular blocked matrices.
So these definitions are all generalizations of triangular blocked matrices.

\noindent  {\rm(3)} If $n_1=\cdots=n_r=1$, then an $(n_1,\cdots,n_r)$-UTB (or LTB) tensor is called an upper (or lower) triangular tensor.
We also have the similar remarks for the second and third types $(n_1,\cdots,n_r)$-UTB (or LTB) tensors.

\noindent  {\rm(4)} Up to some permutation similarity, Definition \ref{Def22}  is essentially equivalent to the (nonnegative) tensor partition
defined in \cite{2016HQ} (Proposition 1). Also, \cite{2016HQ} mentioned that  Proposition 1 in \cite{2016HQ} is the main result of \cite{2014HHQ} .

\noindent  {\rm(5)} It is easy to see from the definitions that, an $(n_1,\cdots,n_r)$-UTB (or 2ndUTB) tensor is permutation similar to
an $(n_r,\cdots,n_1)$-LTB (or 2nd LTB) tensor via the permutation $\sigma : [n]\rightarrow [n]$ with $\sigma(i)=n+1-i$.

\noindent  {\rm(6)} It is easy to see from the definitions that $\mathbb {A}$ being a first or second or third type $(n_1,\cdots,n_r)$-UTB tensor
is independent of the diagonal blocks $\mathbb {A}_1,\cdots,\mathbb {A}_r$.
\end{Rem}

The following proposition is an easy consequence of Definition \ref{Def21}.

\begin{Pro}\label{Pro21}
$\mathbb {A}$ is an $(n_1,\cdots,n_r)$-diagonal blocked tensor if and only if $\mathbb {A}$ is both an $(n_1,\cdots,n_r)$-UTB
tensor and an $(n_1,\cdots,n_r)$-LTB tensor.\end{Pro}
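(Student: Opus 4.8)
The plan is to prove both implications by directly unwinding the three defining conditions (\ref{eq21}), (\ref{eq22}) and (\ref{eq23}), each of which merely prescribes which entries $a_{ii_2\cdots i_m}$ are forced to vanish according to the position of the indices relative to the blocks $I_1,\cdots,I_r$. The single combinatorial observation that drives the whole argument is this: for a fixed $i\in I_j$, the statement ``at least one of $i_2,\cdots,i_m$ lies outside $I_j$'' splits into two cases, namely that the offending index is $\le S_{j-1}$ (so it lies in $I_1\cup\cdots\cup I_{j-1}$, strictly below block $j$) or $\ge S_j+1$ (so it lies in $I_{j+1}\cup\cdots\cup I_r$, strictly above block $j$). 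These two alternatives are precisely the hypotheses $\min\{i_2,\cdots,i_m\}\le S_{j-1}$ and $\max\{i_2,\cdots,i_m\}\ge S_{j}+1$ appearing in the UTB and LTB conditions.

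For the forward direction I would assume $\mathbb{A}$ is diagonal blocked and verify (\ref{eq21}) and (\ref{eq22}) in turn. If $i\in I_j$ with $j\ge 2$ and $\min\{i_2,\cdots,i_m\}\le S_{j-1}$, then some index $i_t$ satisfies $i_t\le S_{j-1}$, hence $i_t\notin I_j$; condition (\ref{eq23}) then forces $a_{ii_2\cdots i_m}=0$, giving the UTB property. The LTB property follows symmetrically: if $i\in I_j$ with $j\le r-1$ and $\max\{i_2,\cdots,i_m\}\ge S_{j}+1$, then some $i_t\ge S_j+1$ lies outside $I_j$, and (\ref{eq23}) again yields a vanishing entry.

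For the converse I would assume both (\ref{eq21}) and (\ref{eq22}) and verify (\ref{eq23}). Fixing $i\in I_j$ and supposing some $i_t\notin I_j$, the dichotomy above gives either $i_t\le S_{j-1}$ or $i_t\ge S_j+1$. In the first case $\min\{i_2,\cdots,i_m\}\le S_{j-1}$, and since an index can be $\le S_{j-1}$ only when $S_{j-1}\ge 1$, we automatically have $j\ge 2$, so the UTB condition applies and the entry vanishes; in the second case $\max\{i_2,\cdots,i_m\}\ge S_j+1$ forces $j\le r-1$ (as $S_r=n$ leaves no admissible index exceeding it), so the LTB condition applies. Either way $a_{ii_2\cdots i_m}=0$, which is exactly (\ref{eq23}).

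I do not anticipate any genuine obstacle, since the statement is a clean set-theoretic repackaging of the three conditions. The only point needing a little care is the treatment of the boundary blocks $j=1$ and $j=r$: the UTB condition is stated only for $j\ge 2$ and the LTB condition only for $j\le r-1$, so in verifying (\ref{eq23}) one must confirm that the case-distinction always lands in the admissible range of $j$ for whichever condition is being invoked. This is guaranteed precisely by the conventions $S_0=0$ and $S_r=n$, which is why I would flag those two equalities explicitly rather than leave them implicit.
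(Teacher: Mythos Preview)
Your proposal is correct and is exactly the direct definitional unpacking the paper has in mind; the paper itself gives no proof beyond remarking that the proposition is ``an easy consequence of Definition~\ref{Def21}.'' Your handling of the boundary cases $j=1$ and $j=r$ via $S_0=0$ and $S_r=n$ is the only point requiring any care, and you have addressed it cleanly.
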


The following example shows that Proposition \ref{Pro21} would no longer hold for the second and third type triangular blocked tensors.

\begin{Exa}\label{Exa22} Let $\mathbb {A}$ be an $(n_1,\cdots,n_r)$-diagonal blocked tensor of dimension $n$ and order 3. Take
$$2\le j\le r-1, \quad i\in I_j, \quad p\le S_{j-1} \quad \mbox {and} \quad q>S_j.$$

Let $\mathbb {B}$ be the tensor of dimension $n$ and order 3 with $b_{ipq}=1$ and all the other entries 0.
Then the tensor $\mathbb {A}+\mathbb {B}$ is both an $(n_1,\cdots,n_r)$-2ndUTB (and thus 3rdUTB) tensor and an
$(n_1,\cdots,n_r)$-2ndLTB (and thus 3rdLTB) tensor, but it is not an $(n_1,\cdots,n_r)$-diagonal blocked tensor.
\end{Exa}

\begin{Exa}\label{Exa23} If $G$ is a $k$-uniform hypergraph with $r$ connected components $G_1,\cdots,G_r$.
Then the adjacency tensor $\mathbb {A}(G)$ of $G$ is a diagonal blocked tensor with diagonal blocks
$\mathbb {A}(G_1),\cdots,\mathbb {A}(G_r)$ (each of which is weakly irreducible).
\end{Exa}

\subsection{A basic property of the third type triangular blocked tensors}
\begin{The}\label{The21} Let the meaning of the notations be as in Definition 2.1，$r\ge 2$, $1\le t\le r-1$, write $k=S_t$ and $I=[k]$.
Then $\mathbb {A}$ is an $(n_1,\cdots,n_r)$-3rdUTB tensor with diagonal blocks $\mathbb {A}_1,\cdots,\mathbb {A}_r$
if and only if the following three conditions are all satisfied:

\vskip 0.1cm

\noindent  {\rm(1)} $\mathbb {A}$ is an $(k,n-k)$-3rdUTB tensor with diagonal blocks $\mathbb {A}[I]$ and $\mathbb {A}[\overline{I}]$.

\vskip 0.1cm

\noindent  {\rm(2)} $\mathbb {A}[I]$ is an $(n_1,\cdots,n_t)$-3rdUTB tensor with diagonal blocks $\mathbb {A}_1,\cdots,\mathbb {A}_t$.

\vskip 0.1cm

\noindent  {\rm(3)} $\mathbb {A}[\overline{I}]$ is an $(n_{t+1},\cdots,n_r)$-3rdUTB tensor with diagonal blocks $\mathbb {A}_{t+1},\cdots,\mathbb {A}_r$.
\end{The}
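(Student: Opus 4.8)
The plan is to reduce both implications to a single combinatorial reformulation of the defining condition. For an index $p\in[n]$ write $b(p)=j$ for the block index with $p\in I_j$. The parenthetical remark accompanying Definition \ref{Def23} shows that the 3rdUTB condition (\ref{eq27}) is equivalent to saying that $a_{ii_2\cdots i_m}=0$ whenever $b(i)>b(i_\ell)$ for every $\ell\in\{2,\ldots,m\}$, i.e. whenever the leading index lies in a strictly later block than all of the remaining indices. I would first record this reformulation together with the elementary equivalent form: for each threshold $s\in\{S_1,\ldots,S_{r-1}\}$ one has $a_{ii_2\cdots i_m}=0$ whenever $i>s$ and $\max\{i_2,\ldots,i_m\}\le s$. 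This threshold version is exactly what makes the three conditions of the theorem align with the single partition, since the cut defining $I$ and $\overline I$ sits at the threshold $s=S_t=k$.

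For necessity ($\Rightarrow$) I would argue by direct restriction. Condition (1) is just the threshold condition at $s=k$: if $i>k$ and all remaining indices are $\le k$, then $b(i)\ge t+1>t\ge b(i_\ell)$ for every $\ell$, so (\ref{eq27}) forces the entry to vanish. Conditions (2) and (3) are the restrictions of (\ref{eq27}) to those entries all of whose indices lie in $I$, respectively in $\overline I$; here one only checks that the block partition of $\mathbb{A}[I]$ is $I_1,\ldots,I_t$ and that of $\mathbb{A}[\overline I]$ is $I_{t+1},\ldots,I_r$, so that the defining vanishing conditions of the subtensors are literal special cases of those of $\mathbb{A}$.

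For sufficiency ($\Leftarrow$) I would fix an entry with $b(i)>b(i_\ell)$ for all $\ell$, set $j=b(i)\ge 2$, and show it vanishes by a case analysis on the position of $j$ relative to $t$. If $j\le t$, then $i$ and all remaining indices lie in $I$, so the entry is an entry of $\mathbb{A}[I]$ forbidden by its refined 3rdUTB structure, and condition (2) applies. If $j\ge t+1$ and all of $i_2,\ldots,i_m$ lie in $\overline I$, the entry belongs to $\mathbb{A}[\overline I]$ and condition (3) applies; if instead all of them lie in $I$, then $i>k$ with $\max\{i_2,\ldots,i_m\}\le k$, so the two-block condition (1) applies.

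The main obstacle is the one remaining case of the sufficiency direction: $j\ge t+1$ with the indices $i_2,\ldots,i_m$ straddling the cut, some lying in $I$ and some in $\overline I$. Such an entry is an entry of neither subtensor, so neither (2) nor (3) reaches it, and condition (1) does not apply either, since not all of the remaining indices are $\le k$. Controlling this straddling case is the crux of the argument, and it is precisely the point where the coarse split at $k$ must be made to interact with the refined block structure that (3) supplies on $\overline I$. I expect this to be the delicate step: the plan would be to attempt to recover these entries through the threshold reformulation combined with an induction on $r-t$, and I would examine carefully whether conditions (1) and (3) as stated are strong enough to eliminate the mixed-index entries, since this is exactly where the interplay between the two-block and the refined conditions is most subtle.
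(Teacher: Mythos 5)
Your necessity argument and the first three branches of your sufficiency analysis are correct and coincide with what the paper does: its sufficiency proof splits into Case 1 ($j\le t$, killed by condition (2)), Subcase 2.1 ($j\ge t+1$ and $\max\{i_2,\ldots,i_m\}\le k$, killed by condition (1)), and Subcase 2.2 ($j\ge t+1$ and $\max\{i_2,\ldots,i_m\}>k$), which it attempts to kill by condition (3). The straddling case you isolate is therefore a genuine gap in your write-up, and your suspicion about it is justified: conditions (1)--(3) are \emph{not} strong enough to eliminate the mixed-index entries, so no induction on $r-t$ (or anything else) will close this case. A small counterexample: let $m=3$, $n=4$, $n_1=n_2=n_3=n_4=1$, $t=2$ (so $k=2$), and let $\mathbb{A}$ have $a_{413}=1$ and all other entries $0$. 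Condition (1) holds because every entry $a_{ii_2i_3}$ with $i\ge 3$ and $i_2,i_3\le 2$ vanishes; conditions (2) and (3) hold because $a_{211}=0$ and $a_{433}=0$; yet $\mathbb{A}$ is not a $(1,1,1,1)$-3rdUTB tensor, since (\ref{eq27}) with $j=4$ requires $a_{413}=0$.

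The reason you cannot borrow the missing step from the paper is that its Subcase 2.2 translates all indices by $-k$ and applies condition (3) to the entry $b_{(i-k)(i_2-k)\cdots(i_m-k)}$ of $\mathbb{A}[\overline{I}]$. That move is legitimate only when $\min\{i_2,\ldots,i_m\}>k$, whereas the subcase hypothesis is merely $\max\{i_2,\ldots,i_m\}>k$; when some trailing index is $\le k$ the translated index leaves the range $[n-k]$ and the cited entry of $\mathbb{A}[\overline{I}]$ does not exist (in the example above, $a_{413}$ would be sent to ``$b_{2\,(-1)\,1}$''). These are exactly the straddling entries you flagged. Contrast this with the second-type analogue (Lemma \ref{Lem22}), where the cut condition at $k$ is triggered by $\min\{i_2,\ldots,i_m\}\le k$ alone, so the dichotomy $\min\le k$ versus $\min>k$ genuinely covers all entries and the same two-subcase scheme goes through; for the third type the cut condition needs \emph{all} trailing indices $\le k$, which is why the sufficiency direction, as stated, does not hold.
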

\begin{proof}  Firstly we write $\mathbb {B}=\mathbb {A}[\overline{I}]$. Thus we have
$$b_{ii_2\cdots i_m}=a_{(i+k)(i_2+k)\cdots (i_m+k)} \quad (i,i_2,\cdots, i_m\in [n-k]).$$

\noindent For $j=1,\cdots,r-t$, we further write $n'_j=n_{t+j}$, and consequently write
$$S'_j=n'_1+\cdots+n'_j, \quad  I'_j=\{S'_{j-1}+1,\cdots,S'_j\} \quad (j=1,\cdots,r-t).$$
Then we have $S'_j=S_{t+j}-S_t=S_{t+j}-k$.

\noindent Necessity.

\vskip 0.1cm

\noindent (1). By Definition \ref{Def23}  we only need to verify the following
$$a_{ii_2\cdots i_m}=0 \quad (\forall i>k, \ \mbox {and} \ \max\{i_2,\cdots,i_m\}\le k). $$
Now assume that $i\in I_j$. Then $i>k=S_t\Longrightarrow j>t$, or $j-1\ge t$. So $\max\{i_2,\cdots,i_m\}\le k=S_t\le S_{j-1}$. By the hypothesis that $\mathbb {A}$ is an $(n_1,\cdots,n_r)$-3rdUTB tensor we have $a_{ii_2\cdots i_m}=0 $. So
$\mathbb {A}$ is an $(k,n-k)$-3rdUTB tensor with diagonal blocks $\mathbb {A}[I]$ and $\mathbb {A}[\overline{I}]$.

\vskip 0.1cm

\noindent (2). Since the condition in Eq.(\ref{eq26}) holds for all $j=2,\cdots,r$ for the tensor $\mathbb {A}$,
it also holds for all $j=2,\cdots,t$ for the tensor $\mathbb {A}[I]$. So $\mathbb {A}[I]$ is an $(n_1,\cdots,n_t)$-3rdUTB tensor with
diagonal blocks $\mathbb {A}_1,\cdots,\mathbb {A}_t$.

\vskip 0.1cm

\noindent (3). By Definition \ref{Def23} we need to show that for any $j=2,\cdots,n-t$, we have
\begin{equation}\label{eq29}
b_{ii_2\cdots i_m}=0 \quad (\forall i\in I'_j, \ \mbox {and} \ \max\{i_2,\cdots,i_m\}\le S'_{j-1}). %\eqno {(2.9)}$$
\end{equation}
Now

$i\in I'_j\Longrightarrow S_{t+j-1}-k+1=S'_{j-1}+1\le i\le S_j'=S_{t+j}-k $

\hskip1cm   $\Longrightarrow S_{t+j-1}+1\le i+k\le S_{t+j}$

\hskip1cm   $\Longrightarrow i+k\in I_{t+j}$.

\vskip 0.1cm

\noindent And

$\max\{i_2,\cdots,i_m\}\le S'_{j-1}\Longrightarrow \max\{i_2,\cdots,i_m\}\le S_{t+j-1}-k$

\hskip4.2cm $\Longrightarrow \max\{i_2+k,\cdots,i_m+k\}\le S_{t+j-1}$.

\vskip 0.1cm

\noindent So by the hypothesis that $\mathbb {A}$ is an $(n_1,\cdots,n_r)$-3rdUTB tensor we have
$$b_{ii_2\cdots i_m}=a_{(i+k)(i_2+k)\cdots(i_m+k)}=0.$$
\noindent So $\mathbb {B}=\mathbb {A}[\overline{I}]$ is an $(n_{t+1},\cdots,n_r)$-3rdUTB tensor with diagonal blocks $\mathbb {A}_{t+1},\cdots,\mathbb {A}_r$ (where $(n_{t+1},\cdots,n_r)=(n_1',\cdots,n'_{r-t})$).

\noindent Sufficiency.

\vskip 0.1cm

By Definition \ref{Def23} we need to show that (\ref{eq27}) holds for $j=2,\cdots, r$.
Suppose that $2\le j\le r$, $i\in I_j$, $\max\{i_2,\cdots,i_m\}\le S_{j-1}$. We consider the following two cases.

\vskip 0.1cm

\noindent {\bf Case 1:}   $j\le t$.

Then by $\max\{i_2,\cdots,i_m\}\le S_{j-1}\le S_{t}$ we see that $a_{ii_2\cdots i_m}$ is an entry of $\mathbb {A}[I]$. So by condition (2) we have $a_{ii_2\cdots i_m}=0$.

\vskip 0.1cm

\noindent {\bf Case 2:}  $j\ge t+1$.

Then we consider the following two subcases.

\vskip 0.1cm

\noindent {\bf Subcase 2.1:}  $\max\{i_2,\cdots,i_m\}\le S_t=k$.

Then by $i\in I_j\Longrightarrow i>S_{j-1}\ge S_t=k$, we have $a_{ii_2\cdots i_m}=0$ by condition (1).

\vskip 0.1cm

\noindent {\bf Subcase 2.2:}  $\max\{i_2,\cdots,i_m\}>k$.

Now we still have $$\max\{i_2,\cdots,i_m\}\le S_{j-1}=S'_{j-t-1}+k\Longrightarrow \max\{i_2-k,\cdots,i_m-k\}\le S'_{j-t-1}.$$ Also
$$i\in I_j\Longrightarrow S_{j-1}<i\le S_j\Longrightarrow S'_{j-t-1}<i-k\le S'_{j-t}\Longrightarrow i-k\in  I'_{j-t}.$$
So by condition (3) and using Definition \ref{Def23} for $\mathbb {B}$, we have $b_{(i-k)(i_2-k)\cdots(i_m-k)}=0$, which is equivalent to $a_{ii_2\cdots i_m}=0$.
\end{proof}

In the following we will see that, for the first type triangular blocked tensors, only the necessity part of Theorem \ref{The21} holds
(see Example \ref{Exa24} and Lemma \ref{Lem21} below), and the sufficiency part only holds for the case $t=1$ (see Theorem \ref{The22}).
We will also see that, for the second type triangular blocked tensors, only the sufficiency part of Theorem \ref{The21} holds
(see Example \ref{Exa24} and Lemma \ref{Lem22}  below), and the necessity part only holds for the case $t=r-1$ (see Theorem \ref{The24}).

\begin{Exa}\label{Exa24}
 Take $n=3$, $m=3$ and $\mathbb{A}$ to be a (0,1) tensor of order 3 and dimension 3 with
$$a_{213}=1,$$
and all the other entries of $\mathbb{A}$ are zero. Taking $n_1=n_2=n_3=1$. Then we have

\noindent {\rm(1).}  $\mathbb{A}$ is a (1,1,1)-2ndUTB tensor, but is not a (1,2)-2ndUTB tensor. Because if $\mathbb{A}$ is
a (1,2)-2ndUTB tensor, then it should satisfy the following condition:
$$a_{ii_2i_3}=0 \quad (\forall i\ge 2, \ \min\{i_2,i_3\}\le 1 \  \mbox {and} \ \max\{i_2,i_3\}\le 3). $$
But $a_{213}=1$ means that the above condition does not hold.
This shows that the necessity part (1) of Theorem \ref{The21} does not hold for the second type UTB tensor (by taking $t=1$,
and thus $k=1$ in Theorem \ref{The21}).

\noindent {\rm(2).} By taking $t=2$, and thus $k=2$ in Theorem \ref{The21},
we can check that $\mathbb{A}$ satisfies all the three conditions in Theorem \ref{The21},
but $\mathbb{A}$ is not a (1,1,1)-UTB tensor.  Because if $\mathbb{A}$ is a (1,1,1)-UTB tensor,
then it should satisfy the following condition (taking $j=2$ in (\ref{eq21})):
$$a_{ii_2i_3}=0 \quad (\forall i\in I_2=\{2\}, \ \min\{i_2,i_3\}\le 1 ). $$
But $a_{213}=1$ means that the above condition does not hold. This shows that the sufficiency part of Theorem \ref{The21} does not hold
for the first type UTB tensor.
\end{Exa}

\subsection{Some basic properties and characterizations of the first type triangular blocked tensors}

\hskip.6cm This subsection mainly contains two results.  Firstly we show in Lemma \ref{Lem21} that the necessity part of
Theorem \ref{The21} also holds for the first type triangular blocked tensors.
Secondly we use Lemma \ref{Lem21} to prove Theorem \ref{The22} which gives a necessary and sufficient condition for the first type
$(n_1,\cdots,n_r)$-UTB tensors and will be used later in Section 4.

\begin{Lem}\label{Lem21}
  Let the meaning of the notations be as in Definition \ref{Def21}, $r\ge 2$, $1\le t\le r-1$, write $k=S_t$ and $I=[k]$.
  If $\mathbb {A}$ is an $(n_1,\cdots,n_r)$-UTB tensor with diagonal blocks $\mathbb {A}_1,\cdots,\mathbb {A}_r$, then we have:

\noindent {\rm(1)} $\mathbb {A}$ is an $(k,n-k)$-UTB tensor with diagonal blocks $\mathbb {A}[I]$ and $\mathbb {A}[\overline{I}]$.

\noindent {\rm(2)} $\mathbb {A}[I]$ is an $(n_1,\cdots,n_t)$-UTB tensor with diagonal blocks $\mathbb {A}_1,\cdots,\mathbb {A}_t$.

\noindent {\rm(3)} $\mathbb {A}[\overline{I}]$ is an $(n_{t+1},\cdots,n_r)$-UTB tensor with diagonal blocks
$\mathbb {A}_{t+1},\cdots,\mathbb {A}_r$.
\end{Lem}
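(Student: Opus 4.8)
The plan is to carry over, essentially verbatim, the \emph{necessity} part of the proof of Theorem \ref{The21}, replacing the role of $\max$ by $\min$ throughout. This works because the first type condition (\ref{eq21}) differs from the third type condition (\ref{eq27}) only in that the defining restriction is $\min\{i_2,\cdots,i_m\}\le S_{j-1}$ rather than $\max\{i_2,\cdots,i_m\}\le S_{j-1}$, and the restriction-to-subtensor argument is insensitive to this change. First I would set up exactly the notation of that proof: write $\mathbb{B}=\mathbb{A}[\overline{I}]$, so that $b_{ii_2\cdots i_m}=a_{(i+k)(i_2+k)\cdots(i_m+k)}$ for all $i,i_2,\cdots,i_m\in[n-k]$, and for $j=1,\cdots,r-t$ put $n'_j=n_{t+j}$, $S'_j=n'_1+\cdots+n'_j$ and $I'_j=\{S'_{j-1}+1,\cdots,S'_j\}$, noting that $S'_j=S_{t+j}-k$.

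For part (1), since the block structure $([k],\overline{I})$ has $r=2$, Definition \ref{Def21}(1) requires only that $a_{ii_2\cdots i_m}=0$ whenever $i>k$ and $\min\{i_2,\cdots,i_m\}\le k$. Writing $i\in I_j$, the condition $i>k=S_t$ forces $j\ge t+1$, hence $S_{j-1}\ge S_t=k$, so $\min\{i_2,\cdots,i_m\}\le k\le S_{j-1}$ and the UTB hypothesis on $\mathbb{A}$ yields $a_{ii_2\cdots i_m}=0$. For part (2), the defining condition (\ref{eq21}) holds for all $j=2,\cdots,r$, in particular for all $j=2,\cdots,t$; restricted to indices $i,i_2,\cdots,i_m\in[k]=I_1\cup\cdots\cup I_t$ this is precisely the condition for $\mathbb{A}[I]$ to be an $(n_1,\cdots,n_t)$-UTB tensor, whose $j$-th diagonal block is $\mathbb{A}[I][I_j]=\mathbb{A}_j$.

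Part (3) is the only step with real content, and is where I expect the care to be needed: I must verify that the $\min$-condition survives the shift by $k$ and lands in the correct block index. Fix $2\le j\le r-t$ and suppose $i\in I'_j$ with $\min\{i_2,\cdots,i_m\}\le S'_{j-1}$. From $i\in I'_j$ I get $S_{t+j-1}+1\le i+k\le S_{t+j}$, i.e. $i+k\in I_{t+j}$, and from $\min\{i_2,\cdots,i_m\}\le S'_{j-1}=S_{t+j-1}-k$ I get $\min\{i_2+k,\cdots,i_m+k\}\le S_{t+j-1}=S_{(t+j)-1}$. Since the block index satisfies $2\le t+j\le r$, the UTB hypothesis on $\mathbb{A}$ gives $a_{(i+k)(i_2+k)\cdots(i_m+k)}=0$, that is $b_{ii_2\cdots i_m}=0$, which is (\ref{eq21}) for $\mathbb{B}$. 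Apart from this index translation everything is bookkeeping, so no genuine obstacle is anticipated. I would finally remark, to orient the reader, that this Lemma establishes only the necessity half: unlike the third type case of Theorem \ref{The21}, the sufficiency direction fails for first type tensors, as recorded by Example \ref{Exa24}(2).
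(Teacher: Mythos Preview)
Your proposal is correct and follows exactly the approach the paper indicates: the paper omits the proof of Lemma \ref{Lem21} with the remark that it ``is similar to the necessity part of Theorem \ref{The21}'', and what you have written is precisely that necessity argument with $\max$ replaced by $\min$ and the index translations carried through carefully. Your added remark about sufficiency failing (Example \ref{Exa24}(2)) is also consistent with the paper's discussion.
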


Since the proof of Lemma \ref{Lem21}  is similar to the necessity part of Theorem \ref{The21}, we choose to omit its proof.

\vskip 0.1cm

Using Lemma \ref{Lem21}, we can obtain the following characterization for the first type upper triangular blocked tensors. This means that, for the first type upper triangular blocked tensors, the sufficiency part of Theorem \ref{The21} still holds for the case $t=1$.
This theorem will be used later in the proof of Theorem \ref{The41}.

\begin{The}\label{The22}
  Let the meaning of the notations be as in Definition \ref{Def21}, $r\ge 2$.
  Then $\mathbb {A}$ is an $(n_1,\cdots,n_r)$-UTB tensor with diagonal blocks $\mathbb {A}_1,\cdots,\mathbb {A}_r$ if and only if
  $\mathbb {A}$ satisfies the following two conditions:

\noindent {\rm(1)} $\mathbb {A}$ is an $(n_1,n-n_1)$-UTB tensor with diagonal blocks $\mathbb {A}_1$ and $\mathbb {A}[\overline{I_1}]$.

\noindent {\rm(2)} $\mathbb {A}[\overline{I_1}]$ is an $(n_{2},\cdots,n_r)$-UTB tensor with diagonal blocks $\mathbb {A}_{2},\cdots,\mathbb {A}_r$.
\end{The}
\begin{proof}
The necessity part follows from Lemma \ref{Lem21}. Now we prove the sufficiency part. Similarly as in Lemma \ref{Lem21},
we write $\mathbb {B}=\mathbb {A}[\overline{I_1}]$. Thus we have:
$$b_{ii_2\cdots i_m}=a_{(i+n_1)(i_2+n_1)\cdots (i_m+n_1)} \quad (i,i_2,\cdots, i_m\in [n-n_1]).$$
For $j=1,\cdots,r-1$, we further write $n'_j=n_{j+1}$, and consequently write
$$S'_j=n'_1+\cdots+n'_j, \quad  I'_j=\{S'_{j-1}+1,\cdots,S'_j\} \quad (j=1,\cdots,r-1).$$
Then we have $S'_j=S_{j+1}-n_1$.

By Definition \ref{Def21} we need to show that (\ref{eq21}) holds for $j=2,\cdots, r$.
Suppose that $2\le j\le r$, $i\in I_j$ and $\min\{i_2,\cdots,i_m\}\le S_{j-1}$ (say, $i_2=\min\{i_2,\cdots,i_m\}$).
Then we consider the following two cases:

\noindent {\bf Case 1:}  $i_2\le n_1$.

Then by $i\in I_j\Longrightarrow i>S_{j-1}\ge S_1=n_1$, we have $a_{ii_2\cdots i_m}=0$ by Example \ref{Exa21} and condition (1).

\noindent {\bf Case 2:} $i_2>n_1$.

Then by hypothesis we have $i_2\le S_{j-1}=S'_{j-2}+n_1\Longrightarrow i_2-n_1\le S'_{j-2}$. Also
$$i\in I_j\Longrightarrow S_{j-1}+1\le i\le S_j\Longrightarrow S'_{j-2}+1\le i-n_1\le S'_{j-1}\Longrightarrow i-n_1\in I'_{j-1}.$$
So by condition (2) we have $b_{(i-n_1)(i_2-n_1)\cdots(i_m-n_1)}=0$, which is equivalent to $a_{ii_2\cdots i_m}=0$.
\end{proof}

Using Theorem \ref{The22}, we can obtain the following equivalent definition for the $(n_1,\cdots,n_r)$-UTB tensor
(we omit its proof since this result will not be used in the remaining of this paper.

\begin{The}\label{The23}   Let the meaning of the notations be as in Definition \ref{Def21}, $r\ge 2$.
Then $\mathbb{A}$ is an $(n_1,\cdots,n_r)$-UTB tensor with diagonal blocks $\mathbb{A}_1,\cdots,\mathbb{A}_r$ if and only if
for each $i=1,\cdots,r-1$, $\mathbb{A}[I_i\cup\cdots\cup I_r]$ is an $(n_i,n_{i+1}+\cdots+n_r)$-UTB tensor with
diagonal blocks $\mathbb{A}_i$ and $\mathbb{A}[I_{i+1}\cup\cdots\cup I_r]$.
\end{The}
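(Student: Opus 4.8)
The plan is to prove the equivalence by induction on the number of blocks $r$, using Theorem \ref{The22} as the single-step ``peeling'' tool. The base case $r=2$ is immediate: the condition in the statement reduces to its only instance $i=1$, namely that $\mathbb{A}[I_1\cup I_2]=\mathbb{A}$ is an $(n_1,n_2)$-UTB tensor with diagonal blocks $\mathbb{A}_1$ and $\mathbb{A}[I_2]$, and since $\overline{I_1}=I_2$ and $n-n_1=n_2$ this is literally the statement that $\mathbb{A}$ is an $(n_1,n_2)$-UTB tensor with diagonal blocks $\mathbb{A}_1,\mathbb{A}_2$.

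For the inductive step, I would assume the claim holds for every tensor partitioned into $r-1$ diagonal blocks, and let $\mathbb{A}$ have $r$ blocks. By Theorem \ref{The22}, $\mathbb{A}$ is an $(n_1,\dots,n_r)$-UTB tensor if and only if both {\rm(i)} $\mathbb{A}$ is an $(n_1,n-n_1)$-UTB tensor with diagonal blocks $\mathbb{A}_1$ and $\mathbb{B}:=\mathbb{A}[\overline{I_1}]$, and {\rm(ii)} $\mathbb{B}$ is an $(n_2,\dots,n_r)$-UTB tensor with diagonal blocks $\mathbb{A}_2,\dots,\mathbb{A}_r$. Since $\overline{I_1}=I_2\cup\dots\cup I_r$ and $n-n_1=n_2+\dots+n_r$, condition {\rm(i)} is exactly the $i=1$ instance of the condition in the statement, so it remains only to match condition {\rm(ii)} with the instances $i=2,\dots,r-1$.

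To do this, I would apply the induction hypothesis to $\mathbb{B}$, a tensor of dimension $n-n_1$ partitioned into the $r-1$ blocks of sizes $n_2,\dots,n_r$, whose index sets I denote $I'_1,\dots,I'_{r-1}$. The only real work is the bookkeeping of the index shift $\mathbb{B}$ inherits as a subtensor of $\mathbb{A}$, namely $b_{i i_2\cdots i_m}=a_{(i+n_1)(i_2+n_1)\cdots(i_m+n_1)}$, so that $I'_\ell$ corresponds to $I_{\ell+1}$ and, because a subtensor of a subtensor is again a subtensor of $\mathbb{A}$, one has $\mathbb{B}[I'_\ell\cup\dots\cup I'_{r-1}]=\mathbb{A}[I_{\ell+1}\cup\dots\cup I_r]$ with matching diagonal blocks. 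Under the correspondence $\ell\mapsto i=\ell+1$, the induction hypothesis then says that condition {\rm(ii)} holds if and only if, for every $i=2,\dots,r-1$, the subtensor $\mathbb{A}[I_i\cup\dots\cup I_r]$ is an $(n_i,n_{i+1}+\dots+n_r)$-UTB tensor with diagonal blocks $\mathbb{A}_i$ and $\mathbb{A}[I_{i+1}\cup\dots\cup I_r]$. Combining this with condition {\rm(i)} covers precisely the instances $i=1,\dots,r-1$, completing the induction.

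I expect the main (though purely technical) obstacle to be this relabeling step: verifying that the index sets and diagonal blocks of $\mathbb{B}=\mathbb{A}[\overline{I_1}]$ translate correctly back to those of $\mathbb{A}$ under the shift by $n_1$, so that the statement obtained for $\mathbb{B}$ from the induction hypothesis is literally the $i\ge 2$ portion of the statement for $\mathbb{A}$. No genuinely new inequality manipulations are needed beyond those already carried out in the proof of Theorem \ref{The22}.
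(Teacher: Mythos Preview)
Your proposal is correct and follows exactly the approach the paper indicates: the paper states that Theorem \ref{The23} is obtained ``using Theorem \ref{The22}'' and then omits the proof, and your induction on $r$ with Theorem \ref{The22} as the peeling step is precisely that argument. The index-shift bookkeeping you describe for $\mathbb{B}=\mathbb{A}[\overline{I_1}]$ is the same relabeling already carried out in the proofs of Theorem \ref{The21} and Theorem \ref{The22}, so there is nothing new to verify.
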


\subsection{Some basic properties and characterizations of the second type triangular blocked tensors}
\hskip.6cm This subsection mainly contains three results.  Firstly we show in Lemma \ref{Lem22}  that the sufficiency part of
Theorem \ref{The21} also holds for the second type triangular blocked tensors. Secondly we use Lemma \ref{Lem22} to prove
Theorem \ref{The24} which gives a necessary and sufficient condition for the second type $(n_1,\cdots,n_r)$-UTB
tensors and will be used later in Section 3, 4 and 6. Thirdly we show in Theorem \ref{The25} that the definition of the second
type UTB tensors in this paper is essentially equivalent to a definition in Definition 4.3 of \cite{2013SSZ}.

\begin{Lem}\label{Lem22} Let the meaning of the notations be as in Definition \ref{Def21}, $r\ge 2$, $1\le t\le r-1$,
write $k=S_t$ and $I=[k]$. If $\mathbb {A}$ satisfies the following three conditions:

\noindent {\rm(1)} $\mathbb {A}$ is an $(k,n-k)$-2ndUTB tensor with diagonal blocks $\mathbb {A}[I]$ and $\mathbb {A}[\overline{I}]$.

\noindent {\rm(2)} $\mathbb {A}[I]$ is an $(n_1,\cdots,n_t)$-2ndUTB tensor with diagonal blocks $\mathbb {A}_1,\cdots,\mathbb {A}_t$.

\noindent {\rm(3)} $\mathbb {A}[\overline{I}]$ is an $(n_{t+1},\cdots,n_r)$-2ndUTB tensor with diagonal blocks
$\mathbb {A}_{t+1},\cdots,\mathbb {A}_r$.

\noindent Then $\mathbb {A}$ is an $(n_1,\cdots,n_r)$-2ndUTB tensor with diagonal blocks $\mathbb {A}_1,\cdots,\mathbb {A}_r$.
\end{Lem}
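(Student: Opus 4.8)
The plan is to verify directly the defining condition (\ref{eq24}) of a 2ndUTB tensor for $\mathbb{A}$: for every $j=2,\cdots,r$ one must show $a_{ii_2\cdots i_m}=0$ whenever $i\in I_j$, $\min\{i_2,\cdots,i_m\}\le S_{j-1}$ and $\max\{i_2,\cdots,i_m\}\le S_j$. This is a mirror of the sufficiency argument in Theorem \ref{The21} (which is exactly what the surrounding text claims Lemma \ref{Lem22} establishes for the second type), the only new feature being that the extra constraint $\max\{i_2,\cdots,i_m\}\le S_j$ must be tracked through every case. Following that proof, I would set $\mathbb{B}=\mathbb{A}[\overline{I}]$, so that $b_{ii_2\cdots i_m}=a_{(i+k)(i_2+k)\cdots(i_m+k)}$, and introduce the shifted data $n'_j=n_{t+j}$, $S'_j=S_{t+j}-k$, $I'_j=\{S'_{j-1}+1,\cdots,S'_j\}$ for $j=1,\cdots,r-t$.

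Fixing $j$, $i$ and $i_2,\cdots,i_m$ satisfying the three hypotheses, I would split into two cases. If $j\le t$, then $i\le S_j\le S_t=k$ and also $\max\{i_2,\cdots,i_m\}\le S_j\le k$, so all subscripts lie in $I=[k]$ and $a_{ii_2\cdots i_m}$ is an entry of $\mathbb{A}[I]$; since $2\le j\le t$, condition (2) (that $\mathbb{A}[I]$ is $(n_1,\cdots,n_t)$-2ndUTB) forces it to vanish. If instead $j\ge t+1$, then $i>S_{j-1}\ge S_t=k$, so $i>k$, and I would distinguish according to $\min\{i_2,\cdots,i_m\}$. When $\min\{i_2,\cdots,i_m\}\le k$, the pair $i>k$, $\min\{i_2,\cdots,i_m\}\le k$ is exactly the $(k,n-k)$-UTB condition of Example \ref{Exa21} (which by Remark \ref{Rem21} coincides with the $(k,n-k)$-2ndUTB condition), so condition (1) gives $a_{ii_2\cdots i_m}=0$.

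The remaining subcase $j\ge t+1$ with $\min\{i_2,\cdots,i_m\}>k$ is where condition (3) enters, and is the step I expect to be most delicate. Here all of $i_2,\cdots,i_m$ (and $i$) exceed $k$, so the entry is genuinely an entry of $\mathbb{B}$. The key observation is that $k<\min\{i_2,\cdots,i_m\}\le S_{j-1}$ forces $S_{j-1}>S_t$, hence $j-1\ge t+1$ and $\ell:=j-t\ge 2$; this is precisely what guarantees $i-k\in I'_\ell$ with $\ell\ge 2$, so that the translated inequalities $\min\{i_2-k,\cdots,i_m-k\}\le S'_{\ell-1}$ and $\max\{i_2-k,\cdots,i_m-k\}\le S'_\ell$ form a genuine instance of the 2ndUTB condition (\ref{eq24}) for $\mathbb{B}$. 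Applying condition (3) then yields $b_{(i-k)(i_2-k)\cdots(i_m-k)}=0$, i.e.\ $a_{ii_2\cdots i_m}=0$, completing the verification. The main obstacle is bookkeeping: ensuring the $\max\le S_j$ hypothesis transfers correctly under the index shift, and confirming that the subcase $\min\{i_2,\cdots,i_m\}>k$ cannot collapse into the degenerate value $\ell=1$ (where $S'_0=0$ would make the hypothesis $\min\le S'_0$ unsatisfiable), so that condition (3) is always legitimately applicable.
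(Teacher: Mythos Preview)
Your proposal is correct and follows essentially the same route as the paper's own proof: the same shifted notation $n'_j,S'_j,I'_j$, the same split into $j\le t$ versus $j\ge t+1$, and within the latter the same dichotomy on whether $\min\{i_2,\cdots,i_m\}\le k$ or $>k$, invoking conditions (2), (1), (3) respectively. Your explicit verification that $\ell=j-t\ge 2$ in the final subcase is a point the paper leaves implicit (it simply applies Definition~\ref{Def22} to $\mathbb{B}$ without remarking that the shifted block index is at least $2$), so if anything your version is slightly more careful there.
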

\begin{proof} Similarly as in Lemma \ref{Lem21}, we write $\mathbb {B}=\mathbb {A}[\overline{I}]$. Thus we have
$$b_{ii_2\cdots i_m}=a_{(i+k)(i_2+k)\cdots (i_m+k)} \quad (i,i_2,\cdots, i_m\in [n-k]).$$
For $j=1,\cdots,r-t$, we further write $n'_j=n_{t+j}$, and consequently write
$$S'_j=n'_1+\cdots+n'_j, \quad  I'_j=\{S'_{j-1}+1,\cdots,S'_j\} \quad (j=1,\cdots,r-t).$$
Then we have $S'_j=S_{t+j}-S_t=S_{t+j}-k$.

By Definition \ref{Def22}  we need to show that (\ref{eq24}) holds for $j=2,\cdots, r$.
Suppose that $2\le j\le r$, $i\in I_j$, $\min\{i_2,\cdots,i_m\}\le S_{j-1}$ (say, $i_2=\min\{i_2,\cdots,i_m\}$),
and $\max\{i_2,\cdots,i_m\}\le S_{j}$. We consider the following two cases.

\noindent {\bf Case 1:} $j\le t$.

Then by $\max\{i_2,\cdots,i_m\}\le S_{j}\le S_{t}$ we see that $a_{ii_2\cdots i_m}$
is an entry of $\mathbb {A}[I]$. So by condition (2) we have $a_{ii_2\cdots i_m}=0$.

\noindent {\bf Case 2:}  $j\ge t+1$.

Then for $i_2=\min\{i_2,\cdots,i_m\}\le S_{j-1}$ and $\max\{i_2,\cdots,i_m\}\le S_{j}$, we consider the following two subcases.

\noindent {\bf Subcase 2.1:} $i_2\le S_t=k$.

Then by $i\in I_j\Longrightarrow i>S_{j-1}\ge S_t=k$, we have $a_{ii_2\cdots i_m}=0$ by Example \ref{Exa21}nd condition (1).

\noindent {\bf Subcase 2.2:} $i_2>k$.

But we still have $i_2\le S_{j-1}=S'_{j-t-1}+k\Longrightarrow i_2-k\le S'_{j-t-1}$. Also
$$i\in I_j\Longrightarrow S_{j-1}<i\le S_j\Longrightarrow S'_{j-t-1}<i-k\le S'_{j-t}\Longrightarrow i-k\in  I'_{j-t}.$$
Furthermore, we have $\max\{i_2,\cdots,i_m\}\le S_{j}\Longrightarrow \max\{i_2-k,\cdots,i_m-k\}\le S'_{j-t}$.
So by condition (3) and using Definition \ref{Def22} for $\mathbb {B}$, we have $b_{(i-k)(i_2-k)\cdots(i_m-k)}=0$,
which is equivalent to $a_{ii_2\cdots i_m}=0$.
\end{proof}

Using Lemma \ref{Lem22}, we can obtain the following characterization for the second type upper triangular blocked tensors. This means that, for the second type upper triangular blocked tensors, the necessity part of Theorem \ref{The21} still holds for the case $t=r-1$. Also, the following Theorem \ref{The24} will be used later in the proofs of Theorems \ref{The33}, \ref{The42} and \ref{The63}.

\begin{The}\label{The24} Let the meaning of the notations be as in Definition \ref{Def21}, $r\ge 2$.
Then $\mathbb{A}$ is an $(n_1,\cdots,n_r)$-2ndUTB tensor with diagonal blocks $\mathbb{A}_1,\cdots,\mathbb{A}_r$ if and only if
$\mathbb{A}$ satisfies the following two conditions:

\noindent {\rm(1)} $\mathbb {A}$ is an $(n-n_r,n_r)$-2ndUTB tensor with diagonal blocks $\mathbb{A}[I_1\cup\cdots\cup I_{r-1}]$ and $\mathbb{A}_r$.

\noindent {\rm(2)} $\mathbb{A}[I_1\cup\cdots\cup I_{r-1}]$ is an $(n_{1},\cdots,n_{r-1})$-2ndUTB tensor with diagonal blocks
$\mathbb {A}_{1},\cdots,\mathbb {A}_{r-1}$.
\end{The}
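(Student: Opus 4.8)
The plan is to prove both directions by directly comparing the defining slices of the $(n_1,\cdots,n_r)$-2ndUTB condition (\ref{eq24}) with the conditions on the two reduced tensors, and then to obtain the sufficiency direction essentially for free from Lemma \ref{Lem22} specialized to $t=r-1$.

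For the necessity part, suppose $\mathbb{A}$ is an $(n_1,\cdots,n_r)$-2ndUTB tensor, so that (\ref{eq24}) holds for every $j=2,\cdots,r$. I would extract condition (1) from the single case $j=r$. The key observation is that for $j=r$ the constraint $\max\{i_2,\cdots,i_m\}\le S_r=n$ holds automatically, so the $j=r$ instance of (\ref{eq24}) collapses to
$$a_{ii_2\cdots i_m}=0\quad(\forall i\in I_r,\ \min\{i_2,\cdots,i_m\}\le S_{r-1}).$$
Since $i\in I_r\Leftrightarrow i>S_{r-1}=n-n_r$, this is exactly the characterization of Example \ref{Exa21} (with $k=S_{r-1}$) for $\mathbb{A}$ to be an $(n-n_r,n_r)$-UTB tensor, which by Remark \ref{Rem21} coincides with being an $(n-n_r,n_r)$-2ndUTB tensor; its diagonal blocks are $\mathbb{A}[I_1\cup\cdots\cup I_{r-1}]$ and $\mathbb{A}_r$. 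This gives condition (1).

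For condition (2), I would write $\mathbb{B}=\mathbb{A}[I_1\cup\cdots\cup I_{r-1}]$, whose entries satisfy $b_{ii_2\cdots i_m}=a_{ii_2\cdots i_m}$ for all indices in $[S_{r-1}]$. The requirement that $\mathbb{B}$ be an $(n_1,\cdots,n_{r-1})$-2ndUTB tensor is precisely the family of instances of (\ref{eq24}) for $j=2,\cdots,r-1$ restricted to entries all of whose indices lie in $[S_{r-1}]$, and these already hold because $\mathbb{A}$ satisfies (\ref{eq24}) for all $j=2,\cdots,r$. Hence condition (2) follows. For the sufficiency part, I would simply invoke Lemma \ref{Lem22} with $t=r-1$, so that $k=S_{r-1}=n-n_r$, $I=I_1\cup\cdots\cup I_{r-1}$ and $\overline{I}=I_r$, whence $\mathbb{A}[\overline{I}]=\mathbb{A}_r$. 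Conditions (1) and (2) of the present theorem are exactly conditions (1) and (2) of Lemma \ref{Lem22}, while condition (3) of that lemma—that $\mathbb{A}_r$ be an $(n_r)$-2ndUTB tensor—holds vacuously, since a single diagonal block carries no constraint (the index set $j=2,\cdots,1$ is empty). Lemma \ref{Lem22} then yields that $\mathbb{A}$ is an $(n_1,\cdots,n_r)$-2ndUTB tensor with diagonal blocks $\mathbb{A}_1,\cdots,\mathbb{A}_r$.

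The only genuinely delicate point, which I would flag as the main obstacle, is the reduction in condition (1): one must notice that the automatic truth of $\max\{i_2,\cdots,i_m\}\le S_r=n$ is exactly what makes the $j=r$ slice of the second-type condition coincide with the two-block condition of Example \ref{Exa21}. This is the same phenomenon recorded in Remark \ref{Rem21}, and it is precisely why the necessity direction of Theorem \ref{The21} survives for the second type only at $t=r-1$ (where the upper index $r$ is the last one, forcing the $\max$ constraint to be vacuous).
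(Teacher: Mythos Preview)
Your proposal is correct and follows essentially the same route as the paper: the sufficiency is obtained from Lemma~\ref{Lem22} at $t=r-1$, and the necessity parts (1) and (2) are deduced respectively from the $j=r$ instance of (\ref{eq24}) (using that $\max\{i_2,\ldots,i_m\}\le S_r=n$ is automatic, exactly as in Example~\ref{Exa21} and Remark~\ref{Rem21}) and from the instances $j=2,\ldots,r-1$ restricted to $[S_{r-1}]$. Your added remark on why condition (3) of Lemma~\ref{Lem22} is vacuous is a helpful clarification that the paper leaves implicit.
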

\begin{proof} The sufficiency part follows from Lemma \ref{Lem22}. Now we prove the necessity part.

\noindent {\rm(1).} Write $k=n-n_r=n_1+\cdots+n_{r-1}=S_{r-1}$. By Example \ref{Exa21} we only need to verify the following
$$a_{ii_2\cdots i_m}=0 \quad (\forall i>k, \ \mbox {and} \ \min\{i_2,\cdots,i_m\}\le k).$$ %\eqno {(2.6)}$$

Now assume that $i\in I_j$. Then $S_j\ge i>k=S_{r-1}\Longrightarrow j>r-1\Longrightarrow j=r$. In this case,
$\max\{i_2,\cdots,i_m\}\le S_j=S_r=n$ holds automatically. So by the hypothesis that $\mathbb {A}$ is an $(n_1,\cdots,n_r)$-2ndUTB
tensor and Definition 2.2 and the fact that $k=S_{r-1}$ we have $a_{ii_2\cdots i_m}=0$.

\noindent {\rm(2).} Since the condition in Eq.(\ref{eq24}) holds for all $j=2,\cdots,r$ for the tensor $\mathbb {A}$,
it also holds for all $j=2,\cdots,r-1$ for the tensor $\mathbb{A}[I_1\cup\cdots\cup I_{r-1}]$. So the conclusion follows from Definition \ref{Def22}.
\end{proof}

The following theorem shows that our definition for the $(n_1,\cdots,n_r)$-2ndUTB tensors is essentially equivalent to Definition 4.3 in
\cite{2013SSZ}. We would like to point out that this theorem will not be used later in this paper.

\begin{The}\label{The25}  Let the meaning of the notations be as in Definition \ref{Def21}, $r\ge 2$.
Then $\mathbb{A}$ is an $(n_1,\cdots,n_r)$-2ndUTB tensor with diagonal blocks $\mathbb{A}_1,\cdots,\mathbb{A}_r$ if and only if
for each $i=2,\cdots,r$, $\mathbb{A}[I_1\cup\cdots\cup I_i]$ is an $((n_1+\cdots+n_{i-1}),n_{i})$-2ndUTB tensor with diagonal
blocks  $\mathbb{A}[I_{1}\cup\cdots\cup I_{i-1}]$ and $\mathbb{A}_i$.
\end{The}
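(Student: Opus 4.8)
The plan is to prove Theorem \ref{The25} by induction on $r$, recognizing the stated condition as the fully ``unrolled'' form of the one-step recursion already established in Theorem \ref{The24}. The base case $r=2$ is immediate: the only relevant index is $i=2$, and since $I_1\cup I_2=[n]$ we have $\mathbb{A}[I_1\cup I_2]=\mathbb{A}$ and $\mathbb{A}[I_1]=\mathbb{A}_1$, so the asserted condition for $i=2$ is literally the statement that $\mathbb{A}$ is an $(n_1,n_2)$-2ndUTB tensor with diagonal blocks $\mathbb{A}_1,\mathbb{A}_2$. Thus both sides of the claimed equivalence coincide when $r=2$.

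For the inductive step I would assume the theorem for every partition into $r-1$ blocks and set $\mathbb{A}'=\mathbb{A}[I_1\cup\cdots\cup I_{r-1}]$. The one point requiring care is the behaviour of subtensors under the reindexing: since $I_1\cup\cdots\cup I_{r-1}=\{1,\cdots,S_{r-1}\}=[S_{r-1}]$, the order-preserving bijection onto $[S_{r-1}]$ is the identity, so no index shift occurs. Consequently $\mathbb{A}'$ is a tensor of dimension $S_{r-1}$ carrying the block partition $(n_1,\cdots,n_{r-1})$ with blocks $\mathbb{A}'[I_j]=\mathbb{A}[I_j]=\mathbb{A}_j$, and by the elementary nested-subtensor identity $\mathbb{A}[J][J']=\mathbb{A}[J']$ valid for $J'\subseteq J$ one has $\mathbb{A}'[I_1\cup\cdots\cup I_i]=\mathbb{A}[I_1\cup\cdots\cup I_i]$ for every $i\le r-1$. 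This identity, together with the absence of reindexing, is the only auxiliary fact to verify; everything else is bookkeeping.

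Finally I would invoke Theorem \ref{The24}: $\mathbb{A}$ is an $(n_1,\cdots,n_r)$-2ndUTB tensor with diagonal blocks $\mathbb{A}_1,\cdots,\mathbb{A}_r$ if and only if (1) $\mathbb{A}$ is an $(S_{r-1},n_r)$-2ndUTB tensor with diagonal blocks $\mathbb{A}'$ and $\mathbb{A}_r$, and (2) $\mathbb{A}'$ is an $(n_1,\cdots,n_{r-1})$-2ndUTB tensor with diagonal blocks $\mathbb{A}_1,\cdots,\mathbb{A}_{r-1}$. Condition (1) is precisely the $i=r$ instance of the condition in Theorem \ref{The25}, using $S_{r-1}=n_1+\cdots+n_{r-1}$ and $\mathbb{A}[I_1\cup\cdots\cup I_r]=\mathbb{A}$. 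Applying the induction hypothesis to $\mathbb{A}'$ shows that condition (2) is equivalent to the assertion that, for each $i=2,\cdots,r-1$, the subtensor $\mathbb{A}'[I_1\cup\cdots\cup I_i]$ is an $((n_1+\cdots+n_{i-1}),n_i)$-2ndUTB tensor with diagonal blocks $\mathbb{A}'[I_1\cup\cdots\cup I_{i-1}]$ and $\mathbb{A}_i$; by the nested-subtensor identity these subtensors equal $\mathbb{A}[I_1\cup\cdots\cup I_i]$ and $\mathbb{A}[I_1\cup\cdots\cup I_{i-1}]$, so (2) is exactly the $i=2,\cdots,r-1$ instances of the condition in Theorem \ref{The25}. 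Conjoining the equivalences for (1) and (2) gives the full two-sided equivalence over all $i=2,\cdots,r$, completing the induction. The argument is entirely formal once Theorem \ref{The24} is available, so I expect no genuine obstacle beyond correctly tracking the index sets and confirming that the truncation to the first $r-1$ blocks introduces no shift.
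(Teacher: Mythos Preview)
Your proof is correct and follows essentially the same approach as the paper: induction on $r$ using Theorem \ref{The24} to peel off the last block, with the $i=r$ instance matching condition (1) and the induction hypothesis applied to $\mathbb{A}[I_1\cup\cdots\cup I_{r-1}]$ handling the instances $i=2,\cdots,r-1$ via condition (2). The paper presents necessity and sufficiency separately rather than as a single chain of equivalences, and it glosses over the nested-subtensor identity and reindexing that you make explicit, but the underlying argument is the same.
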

\begin{proof}
Necessity. The case $i=r$ follows from the necessity part (1) of Theorem \ref{The24}. Now we assume $1\le i\le r-1$.
By the necessity part (2) of Theorem \ref{The24}, $\mathbb{A}[I_{1}\cup\cdots\cup I_{r-1}]$ is an $(n_1,\cdots,n_{r-1})$-2ndUTB tensor with
diagonal blocks $\mathbb{A}_1,\cdots,\mathbb{A}_{r-1}$. Thus by induction on $r$ for the tensor $\mathbb{A}[I_{1}\cup\cdots\cup I_{r-1}]$,
we obtain the desired result.

Sufficiency. We use induction on $r$. The case $r=2$ is trivial. Now we assume that $r\ge 3$. By induction and the case $i=2,\cdots,r-1$ of the
hypothesis,  $\mathbb{A}[I_1\cup\cdots\cup I_{r-1}]$ is an $(n_1,\cdots,n_{r-1})$-2ndUTB tensor with diagonal blocks
$\mathbb{A}_1,\cdots,\mathbb{A}_{r-1}$. By the case $i=r$ of the hypothesis, and using the sufficiency part of the Theorem \ref{The24},
we conclude that $\mathbb{A}$ is an $(n_1,\cdots,n_r)$-2ndUTB tensor with diagonal blocks $\mathbb{A}_1,\cdots,\mathbb{A}_r$.
\end{proof}

Comparing Theorem \ref{The25} with Definition 4.3 and Remark 4.1 in \cite{2013SSZ} we can see that, if we transfer Definition 4.3 of \cite{2013SSZ} from the form of lower triangular blocked tensors to its equivalent form of upper triangular blocked tensors, then that type of $(n_1,\cdots,n_r)$-lower triangular blocked tensors is actually equivalent to the second type $(n_1,\cdots,n_r)$-upper triangular blocked tensors defined in this paper.

\section{The determinants, characteristic polynomials and spectra of the triangular blocked tensors}
\hskip.6cm In this section, we will prove some formulas for the determinants and characteristic polynomials of the first and second type
UTB tensors, and give an example to show that these formulas do not hold for the third type UTB tensors.

The following result for the special case $r=2$ was proved by Hu et al in \cite{2013HHLQ}. For the convenience and self-containedness,
here we give an outline of a simplified proof.

\begin{The}\label{The31}{\rm(\cite{2013HHLQ})}
Let $\mathbb{A}$ be an order $m$ dimension $n$ and $(k,n-k)$-UTB (also 2ndUTB by Remark \ref{Rem21}) tensor with diagonal blocks
$\mathbb {A}_1$ and $\mathbb{A}_2$. Then
\begin{equation}\label{eq31}\det \mathbb {A}= (\det \mathbb {A}_1)^{(m-1)^{n-k}} ( \det \mathbb {A}_2)^{(m-1)^{k}} \end{equation} %\eqno {(3.1)}	
\end{The}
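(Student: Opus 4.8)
The plan is to show that $\det\mathbb{A}$, viewed as a polynomial in the free entries of the $(k,n-k)$-UTB tensor $\mathbb{A}$, factors as $c\,(\det\mathbb{A}_1)^{a}(\det\mathbb{A}_2)^{b}$ for a constant $c$ and nonnegative integers $a,b$, and then to pin down $c,a,b$ by normalization and degree counts. The starting point is Definition \ref{Def12}(1): $\det\mathbb{A}=0$ if and only if $\mathbb{A}x=0$ has a nonzero solution. Writing $x=(y,z)$ with $y=(x_1,\dots,x_k)$ and $z=(x_{k+1},\dots,x_n)$, the $(k,n-k)$-UTB condition of Example \ref{Exa21} makes the bottom $n-k$ equations of $\mathbb{A}x=0$ depend only on $z$, namely $\mathbb{A}_2 z^{m-1}=0$. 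Hence a nonzero solution with $z\neq 0$ forces $\det\mathbb{A}_2=0$, while a nonzero solution with $z=0$ collapses the top $k$ equations to $\mathbb{A}_1 y^{m-1}=0$ with $y\neq 0$, forcing $\det\mathbb{A}_1=0$. Thus the zero set satisfies $V(\det\mathbb{A})\subseteq V(\det\mathbb{A}_1)\cup V(\det\mathbb{A}_2)$. (Conversely, any nonzero $y$ with $\mathbb{A}_1 y^{m-1}=0$ extends by $z=0$ to a nonzero solution, so $V(\det\mathbb{A}_1)\subseteq V(\det\mathbb{A})$ as well, which already shows the factor $\det\mathbb{A}_1$ occurs.)

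From the inclusion $V(\det\mathbb{A})\subseteq V(\det\mathbb{A}_1\det\mathbb{A}_2)$ and the Nullstellensatz, $\det\mathbb{A}$ divides a power of $\det\mathbb{A}_1\det\mathbb{A}_2$. Since $\det\mathbb{A}_1$ and $\det\mathbb{A}_2$ are irreducible (Definition \ref{Def12}(3)) and coprime, because they involve the disjoint entry sets of $\mathbb{A}[I_1]$ and $\mathbb{A}[I_2]$, every irreducible factor of $\det\mathbb{A}$ is an associate of $\det\mathbb{A}_1$ or $\det\mathbb{A}_2$; in particular $\det\mathbb{A}=c\,(\det\mathbb{A}_1)^{a}(\det\mathbb{A}_2)^{b}$ with $a,b\ge 0$. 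The same containment rules out any off-block entry occurring as a factor, since the coordinate hyperplane it would cut out is not contained in $V(\det\mathbb{A}_1)\cup V(\det\mathbb{A}_2)$; consequently $\det\mathbb{A}$ is independent of the off-diagonal block entries, as the claimed formula requires.

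It remains to determine $c,a,b$. Taking $\mathbb{A}=\mathbb{I}$, which is $(k,n-k)$-diagonal blocked with diagonal blocks the unit tensors of dimensions $k$ and $n-k$, and using $\det\mathbb{I}=1$, gives $c=1$. For the exponents I use two standard facts about the determinant polynomial: it is jointly homogeneous of total degree $n(m-1)^{n-1}$, and it has degree at most $(m-1)^{n-1}$ in the entries of each single row. Both transfer to the restricted polynomial $\det\mathbb{A}$: setting the forced entries to zero leaves a homogeneous polynomial of the same total degree (it is not identically zero, as $\det\mathbb{I}=1$), while the per-row degree can only decrease. Summing the per-row bound over rows $1,\dots,k$ gives that the degree of $\det\mathbb{A}$ in the entries of $\mathbb{A}_1$ is at most $k(m-1)^{n-1}$, hence $a\,k(m-1)^{k-1}\le k(m-1)^{n-1}$, i.e. $a\le (m-1)^{n-k}$; likewise $b\le (m-1)^{k}$. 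Comparing total degrees yields
\[ a\,k(m-1)^{k-1}+b\,(n-k)(m-1)^{n-k-1}=n(m-1)^{n-1}=k(m-1)^{n-1}+(n-k)(m-1)^{n-1}, \]
and the two upper bounds force equality in each summand, giving $a=(m-1)^{n-k}$ and $b=(m-1)^{k}$, which is exactly (\ref{eq31}).

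The genuinely delicate points are the block decoupling of $\mathbb{A}x=0$ (which is where the UTB hypothesis enters and must be used exactly as in Example \ref{Exa21}) and the bookkeeping in the degree step: one must be sure the exact total degree and the per-row degree bound of the abstract determinant polynomial survive the specialization to the UTB subspace. Here the total degree transfers because restricting a homogeneous polynomial to a coordinate subspace preserves homogeneity unless the result vanishes, and the per-row bound only weakens under such restriction, which is all that is needed. The variety and Nullstellensatz reasoning, together with the irreducibility and coprimality input, then separates the two factors cleanly, so no explicit resultant computation is required.
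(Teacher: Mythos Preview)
Your proof is correct and follows the paper's strategy through the first three steps almost verbatim: the block decoupling of $\mathbb{A}x=0$ to get $V(\det\mathbb{A})\subseteq V(\det\mathbb{A}_1\det\mathbb{A}_2)$, the Nullstellensatz plus irreducibility of $\det\mathbb{A}_i$ to force the shape $c(\det\mathbb{A}_1)^a(\det\mathbb{A}_2)^b$, and the normalization $c=1$ at $\mathbb{A}=\mathbb{I}$ are exactly what the paper does. The difference is in the final step. The paper pins down $a,b$ by evaluating at a concrete diagonal tensor (first $k$ diagonal entries equal to a variable $a$, last $n-k$ equal to $b$) and comparing exponents in the known formula $\det(\mathrm{diag})=\prod d_i^{(m-1)^{n-1}}$. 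You instead combine the total-degree formula $n(m-1)^{n-1}$ with the per-row degree bound $(m-1)^{n-1}$ to get $a\le (m-1)^{n-k}$ and $b\le (m-1)^k$, and then observe that the total-degree equation forces both bounds to be tight. Both arguments work; the paper's evaluation is more self-contained (only the diagonal-tensor determinant is needed), while your route is slicker but imports the per-row homogeneity of the resultant as an extra ingredient.

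One small remark: your sentence about off-block entries (``the coordinate hyperplane it would cut out\ldots'') is not needed and its reasoning is off --- a variable can occur in a polynomial without being a factor of it. But this is harmless, since the factorization $\det\mathbb{A}=c(\det\mathbb{A}_1)^a(\det\mathbb{A}_2)^b$ that you have already established makes independence of the off-block entries automatic.
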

\begin{proof} {\bf Step 1:} To show that $\det \mathbb {A}=0\Longrightarrow (\det \mathbb{A}_1)( \det \mathbb{A}_2)=0$. Suppose not, then

$$ (\det\mathbb {A}_1)(\det\mathbb {A}_2)\ne 0 \Longrightarrow  \left \{   \begin{array}{c}
                         \det\mathbb {A}_2\ne 0 \Longrightarrow  \mathbb {A}_2z=0 \  \  \mbox  {has only zero solution}          \\
                         \det\mathbb {A}_1\ne 0 \Longrightarrow  \mathbb {A}_1y=0 \  \  \mbox  {has only zero solution}          \\
                               \end{array}
                          \right. $$

\hskip4.5cm $\stackrel{(\ast)}{\Longrightarrow} \mathbb {A}x=\mathbb {A}\left(
   \begin{array}{c}
     y\\
     z \\
   \end{array}
 \right)=0$ has only zero solution

\hskip4.5cm  $\Longrightarrow \det\mathbb {A}\ne 0$.

\noindent where the implication $(\ast)$ holds because: $\mathbb {A}$ is an $(k,n-k)$-UTB tensor with diagonal blocks $\mathbb {A}_1$ and $\mathbb{A}_2$ implies that
$$\mathbb{A}\left(
   \begin{array}{c}
     y\\
     z \\
   \end{array}
 \right)=\left(
   \begin{array}{c}
     *\\
     \mathbb{A}_2z \\
   \end{array}
 \right)  \quad \mbox {and} \quad  \mathbb{A}\left(
   \begin{array}{c}
     y\\
     0 \\
   \end{array}
 \right)=\left(
   \begin{array}{c}
     \mathbb{A}_1y\\
     0 \\
   \end{array}
 \right).  $$

\noindent {\bf Step 2:} By using Hilbert's Nullstellensatz (\cite{1998}) and the irreducibility of the polynomials
$\det\mathbb {A}_1$ and $\det\mathbb {A}_2$ (\cite{2013HHLQ}), we have：
Every irreducible factor of $\det\mathbb {A}$ is an irreducible factor of  $(\det\mathbb {A}_1)(\det\mathbb {A}_2)$.

Thus there exist constants  $c, r_1,r_2$ (independent of the tensor $\mathbb {A}$) such that
\begin{equation}\label{eq32}
\det\mathbb {A}=c\cdot (\det\mathbb {A}_1)^{r_1}(\det\mathbb {A}_2)^{r_2}.\end{equation} %\eqno {(3.2)}$$

\noindent{\bf Step 3:}  Take $\mathbb {A}=\mathbb {I}$ to be the identity tensor, then both $\mathbb {A}_1$ and $\mathbb {A}_2$
are also the identity tensors, thus we obtain $c=1$ from (\ref{eq32}).

\noindent {\bf Step 4 }(To further determine $r_1, r_2$){\bf:} Take $\mathbb{A}$ to be the diagonal tensor with the first $k$ diagonal entries $a$,
and all the rest diagonal entries $b$, then we have:
\begin{equation}\label{eq33}\det\mathbb {A}=a^{k(m-1)^{n-1}}b^{(n-k)(m-1)^{n-1}},\end{equation} % \eqno {(3.3)}$$
and
\begin{equation}\label{eq34}\det\mathbb {A}_1=a^{k(m-1)^{k-1}},\qquad  det\mathbb {A}_2=b^{(n-k)(m-1)^{n-k-1}}.\end{equation} % \eqno {(3.4)}$$
Substituting (\ref{eq33}), (\ref{eq34}) and $c=1$ into (\ref{eq32}), we obtain
\begin{equation}\label{eq35}a^{k(m-1)^{n-1}}b^{(n-k)(m-1)^{n-1}}=a^{r_1k(m-1)^{k-1}}b^{r_2(n-k)(m-1)^{n-k-1}}.\end{equation} % \eqno {(3.5)}$$

\noindent Comparing the exponents of $a$ and $b$ of the both sides of (\ref{eq35}), we obtain $r_1=(m-1)^{n-k}, \ r_2=(m-1)^{k}.$
\end{proof}

The following example shows that Theorem \ref{The31} does not hold for the third type $(k,n-k)$-UTB tensors.

\begin{Exa}\label{Exa31}
 Take a $(0,1)$ tensor $\mathbb{A}$ of order $m=3$ and dimension $n=2$ with
$$a_{111}=a_{122}=1, \  a_{112}=a_{121}=0 \quad and \quad a_{211}=a_{222}=0, \  a_{212}= a_{221}=1.$$
Then $\mathbb{A}$ is a (1,1)-3rdUTB tensor (since $a_{211}=0$) with the two diagonal blocks $\mathbb{A}_1=a_{111}=1$ and
$\mathbb{A}_2=a_{222}=0$. In this case, the right hand side of (\ref{eq31}) is zero since $\det \mathbb {A}_2=0$.

\vskip 0.1cm

On the other hand, we now show that $\det \mathbb {A}\ne 0$. By the definition we can verify that $\mathbb {A}x=0$ is the following system of equations:

$$\left \{   \begin{array}{c}
                         x_1^2+x_2^2=0;          \\
                         2x_1x_2=0,         \\
                               \end{array}
                          \right. $$
which has only a zero solution. Thus we have $\det \mathbb {A}\ne 0$ by the definition of the tensor determinants,
and so (\ref{eq31}) does not hold for this $(1,1)$-3rdUTB tensor $\mathbb{A}$.
\end{Exa}

Now we show that Theorem \ref{The31} also holds for lower triangular blocked tensors.

\begin{The}\label{The32}
Theorem \ref{The31} also holds for lower triangular blocked tensors.
\end{The}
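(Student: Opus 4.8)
The plan is to reduce the lower triangular case to the already-established upper triangular case (Theorem \ref{The31}) by exploiting permutation similarity, since both the determinant of a tensor and the determinants of its diagonal subtensors are invariant under permutation similarity (as recorded in the excerpt, ``similar (and thus permutational similar) tensors have the same determinants''). Concretely, suppose $\mathbb{A}$ is an order $m$ dimension $n$ and $(k,n-k)$-LTB tensor with diagonal blocks $\mathbb{A}_1=\mathbb{A}[\{1,\dots,k\}]$ and $\mathbb{A}_2=\mathbb{A}[\{k+1,\dots,n\}]$. By Remark \ref{Rem24}(5), the reversal permutation $\sigma\colon[n]\to[n]$ with $\sigma(i)=n+1-i$ carries $\mathbb{A}$ to a permutation similar tensor $\mathbb{B}=P\mathbb{A}P^T$ that is an $(n-k,k)$-UTB tensor, and Theorem \ref{The31} already applies to $\mathbb{B}$.

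The key bookkeeping step is to identify the two diagonal blocks of $\mathbb{B}$. Under $\sigma$ the index set $\{1,\dots,k\}$ maps onto $\{n-k+1,\dots,n\}$ and the set $\{k+1,\dots,n\}$ maps onto $\{1,\dots,n-k\}$. Hence the first diagonal block $\mathbb{B}_1=\mathbb{B}[\{1,\dots,n-k\}]$ is precisely the image of $\mathbb{A}_2$ under the (restricted) reversal permutation, and the second diagonal block $\mathbb{B}_2=\mathbb{B}[\{n-k+1,\dots,n\}]$ is the image of $\mathbb{A}_1$. Since each restriction of $\sigma$ is itself a permutation of the corresponding block, $\mathbb{B}_1$ is permutation similar to $\mathbb{A}_2$ and $\mathbb{B}_2$ is permutation similar to $\mathbb{A}_1$, so $\det\mathbb{B}_1=\det\mathbb{A}_2$, $\det\mathbb{B}_2=\det\mathbb{A}_1$, and of course $\det\mathbb{B}=\det\mathbb{A}$.

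I would then simply apply Theorem \ref{The31} to the $(n-k,k)$-UTB tensor $\mathbb{B}$. Its blocks have dimensions $n-k$ and $k$ respectively, so the formula reads
\begin{equation*}
\det\mathbb{B}=(\det\mathbb{B}_1)^{(m-1)^{k}}(\det\mathbb{B}_2)^{(m-1)^{n-k}}.
\end{equation*}
Substituting the identifications from the previous step gives
\begin{equation*}
\det\mathbb{A}=(\det\mathbb{A}_2)^{(m-1)^{k}}(\det\mathbb{A}_1)^{(m-1)^{n-k}}
=(\det\mathbb{A}_1)^{(m-1)^{n-k}}(\det\mathbb{A}_2)^{(m-1)^{k}},
\end{equation*}
which is exactly formula (\ref{eq31}), as required. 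The only place that demands care is verifying that the swap of block dimensions caused by the reversal permutation exactly compensates the swap of exponents in Theorem \ref{The31}; the symmetric structure of the formula makes this match up cleanly, and there is no analytic difficulty beyond this index tracking. I expect this bookkeeping to be the main (and essentially only) obstacle.
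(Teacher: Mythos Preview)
Your proposal is correct and follows essentially the same route as the paper: reduce the $(k,n-k)$-LTB case to the $(n-k,k)$-UTB case via the reversal permutation of Remark \ref{Rem24}(5), then invoke Theorem \ref{The31} and the invariance of determinants under permutation similarity. Your treatment is in fact slightly more careful than the paper's, since you explicitly note that the diagonal blocks of $\mathbb{B}$ are only permutation similar to $\mathbb{A}_2,\mathbb{A}_1$ (not literally equal) and record why this still gives $\det\mathbb{B}_1=\det\mathbb{A}_2$, $\det\mathbb{B}_2=\det\mathbb{A}_1$.
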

\begin{proof} Write $k'=n-k$. Then by (5) of Remark \ref{Rem24} we have that:

\hskip.1cm $\mathbb{A}$ is an order $m$ dimension $n$ and $(k,n-k)$-LTB tensor with diagonal blocks $\mathbb {A}_1$ and $\mathbb{A}_2$

\noindent $\Longrightarrow \mathbb {A}$ is permutation similar to a $(k',n-k')$-UTB tensor $\mathbb {B}$ with diagonal blocks $\mathbb {A}_2$ and $\mathbb{A}_1$.

\noindent Thus by \cite{2013S}  (similar tensors have the same determinants) and using Theorem \ref{The31}  for $\mathbb {B}$ we have:
$$\det \mathbb{A}=\det \mathbb{B}=(\det \mathbb {A}_2)^{(m-1)^{n-k'}} ( \det \mathbb {A}_1)^{(m-1)^{k'}}
=(\det \mathbb {A}_1)^{(m-1)^{n-k}} ( \det \mathbb {A}_2)^{(m-1)^{k}}. $$
\end{proof}

By using Theorem \ref{The31} and Lemma \ref{Lem21}, we can use induction to obtain the formula for determinants of
general $(n_1,\cdots,n_r)$-UTB (and 2ndUTB) tensors as follows.  (The case of the second type also appeared in Shao et al \cite{2013SSZ}.)

\begin{The}\label{The33} Let $\mathbb{A}$ be an order $m$ dimension $n$ and $(n_1,\cdots,n_r)$-UTB (or 2ndUTB) tensor
with diagonal blocks $\mathbb{A}_1,\cdots,\mathbb{A}_r$. Then we have:
\begin{equation}\label{eq36}\det\mathbb {A}=\prod_{i=1}^r (\det\mathbb {A}_i)^{(m-1)^{n-n_i}}.\end{equation}% \eqno {(3.6)}$$
\end{The}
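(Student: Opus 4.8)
The plan is to induct on the number $r$ of diagonal blocks, using Theorem \ref{The31} as the base case and the block-peeling decompositions of Lemma \ref{Lem21} (for the first type) and Theorem \ref{The24} (for the second type) to reduce the number of blocks from $r$ to $r-1$. For the base case $r=2$, Remark \ref{Rem21} tells us that $(n_1,n_2)$-UTB and $(n_1,n_2)$-2ndUTB tensors coincide, so Theorem \ref{The31} applies with $k=n_1$; since $n-k=n_2$ and $k=n_1=n-n_2$, formula (\ref{eq31}) is exactly the desired identity $\det\mathbb{A}=(\det\mathbb{A}_1)^{(m-1)^{n-n_1}}(\det\mathbb{A}_2)^{(m-1)^{n-n_2}}$.

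For the inductive step in the first type case, I would apply Lemma \ref{Lem21} with $t=1$ (so $k=n_1$, $I=I_1$). Its part (1) gives that $\mathbb{A}$ is an $(n_1,n-n_1)$-UTB tensor with diagonal blocks $\mathbb{A}_1$ and $\mathbb{A}[\overline{I_1}]$, while its part (3) gives that $\mathbb{A}[\overline{I_1}]$ is an $(n_2,\cdots,n_r)$-UTB tensor with diagonal blocks $\mathbb{A}_2,\cdots,\mathbb{A}_r$. Theorem \ref{The31} then yields $\det\mathbb{A}=(\det\mathbb{A}_1)^{(m-1)^{n-n_1}}(\det\mathbb{A}[\overline{I_1}])^{(m-1)^{n_1}}$, and since $\mathbb{A}[\overline{I_1}]$ has dimension $n-n_1$ and only $r-1$ diagonal blocks, the induction hypothesis gives $\det\mathbb{A}[\overline{I_1}]=\prod_{i=2}^r(\det\mathbb{A}_i)^{(m-1)^{(n-n_1)-n_i}}$. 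Substituting and using the exponent identity $(m-1)^{(n-n_1)-n_i}\cdot(m-1)^{n_1}=(m-1)^{n-n_i}$ for each $i\ge 2$ recovers (\ref{eq36}).

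For the second type the argument is symmetric, except that I would peel off the \emph{last} block using Theorem \ref{The24} rather than the first: its part (1) exhibits $\mathbb{A}$ as an $(n-n_r,n_r)$-2ndUTB tensor with diagonal blocks $\mathbb{A}[I_1\cup\cdots\cup I_{r-1}]$ and $\mathbb{A}_r$, and its part (2) gives that $\mathbb{A}[I_1\cup\cdots\cup I_{r-1}]$ is an $(n_1,\cdots,n_{r-1})$-2ndUTB tensor of dimension $n-n_r$ with blocks $\mathbb{A}_1,\cdots,\mathbb{A}_{r-1}$. Applying Remark \ref{Rem21} together with Theorem \ref{The31} and then the induction hypothesis to $\mathbb{A}[I_1\cup\cdots\cup I_{r-1}]$, the analogous bookkeeping $(m-1)^{(n-n_r)-n_i}\cdot(m-1)^{n_r}=(m-1)^{n-n_i}$ again produces (\ref{eq36}).

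The main obstacle is not structural but a matter of careful exponent arithmetic: when passing to the subtensor $\mathbb{A}[\overline{I_1}]$ (respectively $\mathbb{A}[I_1\cup\cdots\cup I_{r-1}]$) the ambient dimension drops from $n$ to $n-n_1$ (respectively $n-n_r$), so the inductive exponents $(m-1)^{(n-n_1)-n_i}$ are computed relative to this smaller dimension. One must multiply by the compensating factor $(m-1)^{n_1}$ (respectively $(m-1)^{n_r}$) supplied by Theorem \ref{The31} to restore the correct global exponent $(m-1)^{n-n_i}$; verifying this cancellation is the one place where the computation must be checked, and it goes through cleanly because $(n-n_1)-n_i+n_1=n-n_i$.
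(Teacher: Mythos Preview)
Your proposal is correct and follows essentially the same inductive approach as the paper. The only cosmetic difference is that for the first type you peel off the \emph{first} block via Lemma~\ref{Lem21} with $t=1$, whereas the paper treats both types uniformly by peeling off the \emph{last} block (invoking Lemma~\ref{Lem21} with $t=r-1$ for the first type and Theorem~\ref{The24} for the second type); since Lemma~\ref{Lem21} holds for every $1\le t\le r-1$, either choice works and the exponent bookkeeping is identical.
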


\begin{proof} We use induction on $r$. If $r=2$, the result is just Theorem \ref{The31}. Now we assume $r\ge 3$.
Write $\mathbb{B}=\mathbb{A}[I_1\cup\cdots\cup I_{r-1}]$, then by Lemma \ref{Lem21}  and Theorem \ref{The24}  we know that $\mathbb{A}$
is an $(n-n_r,n_r)$-UTB (or 2ndUTB) tensor with diagonal blocks $\mathbb{B}$ and $\mathbb{A}_r$. Thus by Theorem \ref{The31}  we have
\begin{equation}\label{eq37}\det \mathbb {A}= (\det \mathbb {B})^{(m-1)^{n_r}} ( \det \mathbb {A}_r)^{(m-1)^{n-n_r}} .\end{equation}%\eqno {(3.7)}$$

\noindent Also by Lemma \ref{Lem21}  and Theorem \ref{The24} we know that $\mathbb{B}$ is an $(n_1,\cdots,n_{r-1})$-UTB (or 2ndUTB) tensor
with diagonal blocks $\mathbb{A}_1,\cdots,\mathbb{A}_{r-1}$. So by induction we have
\begin{equation}\label{eq38}\det\mathbb {B}=\prod_{i=1}^{r-1} (\det\mathbb {A}_i)^{(m-1)^{n-n_r-n_i}}.\end{equation}% \eqno {(3.8)}$$

\noindent Substituting (\ref{eq38}) into (\ref{eq37}), we obtain (\ref{eq36}).
\end{proof}

\begin{The}\label{The34} Let $\mathbb{A}$ be an order $m$ dimension $n$ and $(n_1,\cdots,n_r)$-UTB (or 2ndUTB) tensor with diagonal blocks
$\mathbb{A}_1,\cdots,\mathbb{A}_r$. Then we have the following relations for the characteristic polynomial and spectrum of $\mathbb{A}$:

\begin{equation}\label{eq39}\phi_{\mathbb {A}}(\lambda)=\prod_{i=1}^r (\phi_{\mathbb {A}_i}(\lambda))^{(m-1)^{n-n_i}},\end{equation}%  \eqno {(3.9)}$$

\noindent and
\begin{equation}\label{eq310}Spec(\mathbb {A})=\bigcup_{i=1}^r (Spec(\mathbb {A}_i))^{(m-1)^{n-n_i}},\end{equation}%  \eqno {(3.10)}$$

\noindent where the notation $S^k$ of a multi-set $S$ denotes the repetition of $k$ times of the set $S$, and
\begin{equation}\label{eq311} \rho(\mathbb {A})=\max \{\rho(\mathbb {A}_i) \ | \ i=1,\cdots,r\}. \end{equation}%  \eqno {(3.11)}$$
\end{The}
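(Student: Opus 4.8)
The plan is to deduce all three relations directly from the determinant formula of Theorem \ref{The33}, using the defining relation $\phi_{\mathbb{A}}(\lambda)=\det(\lambda\mathbb{I}-\mathbb{A})$. The one structural fact that makes everything work is that $\lambda\mathbb{I}-\mathbb{A}$ is again an $(n_1,\cdots,n_r)$-UTB (respectively 2ndUTB) tensor, whose diagonal blocks are precisely $\lambda\mathbb{I}-\mathbb{A}_1,\cdots,\lambda\mathbb{I}-\mathbb{A}_r$, where each occurrence of $\mathbb{I}$ denotes the unit tensor of the appropriate dimension.

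First I would verify this structural claim. Since the unit tensor $\mathbb{I}$ has nonzero entries only in the fully diagonal positions (where $i_1=i_2=\cdots=i_m$), forming $\lambda\mathbb{I}-\mathbb{A}$ alters none of the off-block entries of $\mathbb{A}$ except by an overall sign. Hence the vanishing pattern required by Definition \ref{Def21} (or Definition \ref{Def22}) for $\mathbb{A}$ is inherited by $\lambda\mathbb{I}-\mathbb{A}$, so by Remark \ref{Rem24}(6) the tensor $\lambda\mathbb{I}-\mathbb{A}$ is a triangular blocked tensor of the same type. Moreover, restricting $\mathbb{I}$ to the index set $I_j$ yields the unit tensor of dimension $n_j$, so the $j$-th diagonal block of $\lambda\mathbb{I}-\mathbb{A}$ is $(\lambda\mathbb{I}-\mathbb{A})[I_j]=\lambda\mathbb{I}-\mathbb{A}_j$.

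Next I would apply Theorem \ref{The33} to $\lambda\mathbb{I}-\mathbb{A}$, obtaining
$$\det(\lambda\mathbb{I}-\mathbb{A})=\prod_{i=1}^r\bigl(\det(\lambda\mathbb{I}-\mathbb{A}_i)\bigr)^{(m-1)^{n-n_i}}.$$
Because the left-hand side is $\phi_{\mathbb{A}}(\lambda)$ and $\det(\lambda\mathbb{I}-\mathbb{A}_i)=\phi_{\mathbb{A}_i}(\lambda)$, this is exactly (\ref{eq39}). Relation (\ref{eq310}) then follows by reading off roots: the spectrum is the multiset of roots of the characteristic polynomial, and the product formula shows that each root of $\phi_{\mathbb{A}_i}$ appears in $\phi_{\mathbb{A}}$ with its multiplicity multiplied by $(m-1)^{n-n_i}$, which is precisely what the multiset union $\bigcup_{i=1}^r(Spec(\mathbb{A}_i))^{(m-1)^{n-n_i}}$ encodes. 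Finally, (\ref{eq311}) follows because $\rho(\mathbb{A})$ is the maximum modulus over the eigenvalues, and this maximum is unaffected by the repetition multiplicities, so it equals $\max_i\rho(\mathbb{A}_i)$.

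I expect no real obstacle here, since the statement is essentially a corollary of Theorem \ref{The33}. The only point deserving explicit care is the structural claim that $\lambda\mathbb{I}-\mathbb{A}$ stays a triangular blocked tensor of the same type with diagonal blocks $\lambda\mathbb{I}-\mathbb{A}_i$; this rests on the diagonal nature of $\mathbb{I}$ together with Remark \ref{Rem24}(6), and I would state it as the first step of the argument.
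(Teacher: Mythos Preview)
Your proposal is correct and matches the paper's approach: the paper states Theorem \ref{The34} without a separate proof, treating it as an immediate consequence of Theorem \ref{The33} applied to $\lambda\mathbb{I}-\mathbb{A}$, which is exactly what you carry out. The only thing you add is making explicit the (trivial) verification that $\lambda\mathbb{I}-\mathbb{A}$ retains the same triangular blocked structure with diagonal blocks $\lambda\mathbb{I}-\mathbb{A}_i$.
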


\begin{Cor}\label{Cor31}   Let $G$ be a $k$-uniform hypergraph with the $r$ connected components $G_1,\cdots,G_r$. Then we have
\begin{equation}\label{eq312}Spec(\mathbb {A}(G))=\bigcup_{i=1}^r (Spec(\mathbb {A}(G_i)))^{(m-1)^{n-n_i}}, \end{equation}% \eqno {(3.12)}$$

\noindent and
\begin{equation}\label{eq313}\rho(\mathbb {A}(G))=\max \{\rho(\mathbb {A}(G_i)) \ | \ i=1,\cdots,r\}. \end{equation}% \eqno {(3.13)}$$
\end{Cor}
\begin{proof} By Example \ref{Exa23}, the adjacency tensor $\mathbb {A}(G)$ of $G$ is a diagonal blocked tensor with diagonal blocks
$\mathbb {A}(G_1),\cdots,\mathbb {A}(G_r)$. \end{proof}

\section{The products of the triangular blocked tensors}

\hskip.6cm In this section we will show that the product of any two $(n_1,\cdots,n_r)$ upper (or lower) triangular blocked tensors
of the first or second or third type is still an $(n_1,\cdots,n_r)$ upper (or lower) triangular blocked tensor of the same type.
We first prove the case $r=2$ in Lemma \ref{Lem41}  (for the first and second types) and Lemma \ref{Lem42} (for the third type).

The first result (but not the second result about the two diagonal blocks of $\mathbb {A}\mathbb {B}$) of the following lemma
can be found in \cite{2013SSZ} (Proposition 4.1). Recall that in Remark \ref{Rem21}, we have mentioned that in the case $r=2$,
the $(n_1,n_2)$-UTB tensors and $(n_1,n_2)$-2ndUTB tensors are the same.

\begin{Lem}\label{Lem41}   Let $\mathbb{A}$ and $\mathbb{B}$ be order $m$ and order $k$, dimension $n$ and $(p,n-p)$-UTB
(also 2ndUTB by Remark  \ref{Rem21}) tensor with diagonal blocks $\mathbb {A}_1$, $\mathbb{A}_2$ and $\mathbb {B}_1$,
$\mathbb{B}_2$, respectively. Then their product $\mathbb {A}\mathbb {B}$ is also a $(p,n-p)$-UTB tensor with the two diagonal blocks
$\mathbb {A}_1\mathbb {B}_1$ and $\mathbb{A}_2\mathbb {B}_2$.
\end{Lem}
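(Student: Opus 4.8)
The plan is to verify directly, from the product formula (Definition 1.3) and the defining condition of a $(p,n-p)$-UTB tensor (Example 2.1, equation (2.6)), that the product tensor $\mathbb{C}=\mathbb{A}\mathbb{B}$ satisfies the same UTB condition. Since we are in the case $r=2$, by Example 2.1 it suffices to show that $c_{i\alpha_1\cdots\alpha_{m-1}}=0$ whenever $i>p$ and the multi-index $(\alpha_1,\cdots,\alpha_{m-1})$ contains at least one entry lying in $[p]$. Recall that each $\alpha_s\in[n]^{k-1}$, so ``at least one entry in $[p]$'' refers to the finest index level: at least one of the $n(k-1)$ scalar subscripts making up $\alpha_1,\cdots,\alpha_{m-1}$ is $\le p$.

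First I would write out the product entry as
\begin{equation*}
c_{i\alpha_1\cdots\alpha_{m-1}}=\sum_{i_2,\cdots,i_m=1}^{n} a_{ii_2\cdots i_m}\,b_{i_2\alpha_1}\cdots b_{i_m\alpha_{m-1}},
\end{equation*}
and fix $i>p$. The key observation is a dichotomy applied termwise to each summand. Since $\mathbb{A}$ is UTB and $i>p$, the factor $a_{ii_2\cdots i_m}$ vanishes unless $\min\{i_2,\cdots,i_m\}>p$, i.e.\ unless all of $i_2,\cdots,i_m$ lie in $\{p+1,\cdots,n\}$; so only such terms survive. Now suppose the target multi-index has some scalar subscript $\le p$, say this subscript sits inside $\alpha_s$, so that $\alpha_s$ contains an index in $[p]$. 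In the surviving terms we have $i_{s+1}>p$, and the factor $b_{i_{s+1}\alpha_s}$ has first subscript $i_{s+1}>p$ while $\alpha_s$ has a subscript $\le p$; since $\mathbb{B}$ is UTB, equation (2.6) forces $b_{i_{s+1}\alpha_s}=0$. Hence every surviving summand contains a zero factor, so $c_{i\alpha_1\cdots\alpha_{m-1}}=0$, which is exactly the UTB condition for $\mathbb{A}\mathbb{B}$.

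For the second assertion, that the diagonal blocks of $\mathbb{A}\mathbb{B}$ are $\mathbb{A}_1\mathbb{B}_1$ and $\mathbb{A}_2\mathbb{B}_2$, I would restrict all outer indices to $I_1=[p]$ (respectively $I_2=\{p+1,\cdots,n\}$) and show the sum collapses to the corresponding block product. For the $(1,1)$-block, taking $i\le p$ and all scalar subscripts of $\alpha_1,\cdots,\alpha_{m-1}$ in $[p]$, the factors $b_{i_{s+1}\alpha_s}$ vanish (again by (2.6), now applied to $\mathbb{B}$ being LTB-compatible in the relevant direction, or directly) unless each $i_{s+1}\le p$, so the summation range effectively reduces to indices in $[p]$ and the formula becomes precisely the product $\mathbb{A}_1\mathbb{B}_1$ computed from the subtensors $\mathbb{A}[I_1]$ and $\mathbb{B}[I_1]$; the $(2,2)$-block is symmetric. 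I expect the main obstacle to be bookkeeping the two index levels correctly — distinguishing the ``outer'' surviving summation indices $i_2,\cdots,i_m$ (controlled by $\mathbb{A}$) from the scalar subscripts packed inside each $\alpha_s\in[n]^{k-1}$ (controlled by $\mathbb{B}$) — and making sure the vanishing argument is applied to the right factor $b_{i_{s+1}\alpha_s}$. Once the correct index identified as $\le p$ is pinned to the correct $\mathbb{B}$-factor, the vanishing is immediate from (2.6) and no genuine computation remains.
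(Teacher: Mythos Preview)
Your proposal is correct and follows essentially the same route as the paper. The paper cites \cite{2013SSZ} for the fact that $\mathbb{A}\mathbb{B}$ is $(p,n-p)$-UTB rather than writing out the termwise dichotomy you describe, but that dichotomy is exactly the underlying argument; for the diagonal blocks the paper argues just as you do, using the UTB property of $\mathbb{B}$ to collapse the sum for $\mathbb{C}_1$ and the UTB property of $\mathbb{A}$ to collapse the sum for $\mathbb{C}_2$ (so ``symmetric'' is slightly imprecise---the roles of $\mathbb{A}$ and $\mathbb{B}$ swap between the two blocks---and the parenthetical about ``LTB-compatible'' is unnecessary since (2.6) applies directly).
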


\begin{proof} Write $\mathbb{C}= \mathbb {A}\mathbb {B}$. Then by the definition of the tensor product we have (see (\ref{eq13})):
\begin{equation}\label{eq41}
c_{i\alpha_1\cdots \alpha_{m-1}}=\sum_{i_2,\cdots ,i_m=1}^n a_{ii_2\cdots i_m}b_{i_2\alpha_1}\cdots b_{i_m\alpha_{m-1}}
\qquad  (i\in [n], \  \alpha_1, \cdots ,\alpha_{m-1}\in [n]^{k-1}). \end{equation} % \eqno {(4.1)}$$

\noindent The proof of the first result that $\mathbb{C}$ is a $(p,n-p)$-UTB tensor can be found in \cite{2013SSZ} (Proposition 4.1).

Now we show that the diagonal blocks $\mathbb {C}_1$ and $\mathbb {C}_2$ of $\mathbb {A}\mathbb {B}$ are $\mathbb {A}_1\mathbb {B}_1$
and $\mathbb{A}_2\mathbb {B}_2$.
For the first diagonal block $\mathbb {C}_1$, let $i$ and all the subscripts in $\alpha_1, \cdots ,\alpha_{m-1}$ are all $\le p$.
In this case, if all $b_{i_j\alpha_{j-1}}\ne 0$, then we must have all $i_j\le p \ (j=2,\cdots,m)$ since $\mathbb {B}$ is $(p,n-p)$-UTB.
Thus we have
$$ c_{i\alpha_1\cdots \alpha_{m-1}}=\sum_{i_2,\cdots ,i_m=1}^n a_{ii_2\cdots i_m}b_{i_2\alpha_1}\cdots b_{i_m\alpha_{m-1}}
=\sum_{i_2,\cdots ,i_m=1}^p a_{ii_2\cdots i_m}b_{i_2\alpha_1}\cdots b_{i_m\alpha_{m-1}}
=(\mathbb {A}_1\mathbb {B}_1)_{i\alpha_1\cdots \alpha_{m-1}},$$
so we obtain $\mathbb {C}_1=\mathbb {A}_1\mathbb {B}_1$.

\vskip 0.1cm

Now we consider the second diagonal block $\mathbb {C}_2$. Let $i$ and all the subscripts in $\alpha_1, \cdots ,\alpha_{m-1}$ are all $> p$.
In this case, if $\min\{i_2,\cdots ,i_m\}\le p$, then we will have $a_{ii_2\cdots i_m}=0$ since $\mathbb {A}$ is $(p,n-p)$-UTB. Thus we have
$$c_{i\alpha_1\cdots \alpha_{m-1}}=\sum_{i_2,\cdots ,i_m=1}^n a_{ii_2\cdots i_m}b_{i_2\alpha_1}\cdots b_{i_m\alpha_{m-1}}
=\sum_{i_2,\cdots ,i_m=p+1}^n a_{ii_2\cdots i_m}b_{i_2\alpha_1}\cdots b_{i_m\alpha_{m-1}}.$$

Let $j=i-p$, $j_t=i_t-p$, and $\beta_j$ be the sequence of the subscripts obtained by subtracting $p$ from all the subscripts in $\alpha_j$.
Then we have
$$\begin{aligned}
(\mathbb {A}_2\mathbb {B}_2)_{j\beta_1\cdots \beta_{m-1}} &= \sum_{j_2,\cdots ,j_m=1}^{n-p} (\mathbb {A}_2)_{jj_2\cdots j_m}(\mathbb {B}_2)_{j_2\beta_1}\cdots (\mathbb {B}_2)_{j_m\beta_{m-1}}\\
&=\sum_{i_2,\cdots ,i_m=p+1}^n a_{ii_2\cdots i_m}b_{i_2\alpha_1}\cdots b_{i_m\alpha_{m-1}}
=c_{i\alpha_1\cdots \alpha_{m-1}}.
\end{aligned}
$$
So we obtain $\mathbb {C}_2=\mathbb {A}_2\mathbb {B}_2$.
\end{proof}

Now we show that Lemma \ref{Lem41} is also true for the 3rdUTB tensors.

\begin{Lem}\label{Lem42}    Lemma \ref{Lem41} is also true for the 3rdUTB tensors.\end{Lem}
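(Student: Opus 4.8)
The plan is to prove the analogue of Lemma~\ref{Lem41} for the third type, so the statement to establish is: if $\mathbb{A}$ and $\mathbb{B}$ are both $(p,n-p)$-3rdUTB tensors (of orders $m$ and $k$ respectively) with diagonal blocks $\mathbb{A}_1,\mathbb{A}_2$ and $\mathbb{B}_1,\mathbb{B}_2$, then $\mathbb{C}=\mathbb{A}\mathbb{B}$ is again a $(p,n-p)$-3rdUTB tensor with diagonal blocks $\mathbb{A}_1\mathbb{B}_1$ and $\mathbb{A}_2\mathbb{B}_2$. I would follow the exact structure of the proof of Lemma~\ref{Lem41}, starting from the product formula (\ref{eq13}). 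The key difference is that the defining vanishing condition for a $(p,n-p)$-3rdUTB tensor (Definition~\ref{Def23}, case $r=2$, $j=2$) is $a_{ii_2\cdots i_m}=0$ whenever $i>p$ and $\max\{i_2,\cdots,i_m\}\le p$, i.e.\ $i>p$ together with \emph{all} of $i_2,\cdots,i_m$ being $\le p$.

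**The blockwise-triangularity step.** First I would verify that $\mathbb{C}$ is itself $(p,n-p)$-3rdUTB. Fix $i>p$ and a multi-index $\alpha_1,\cdots,\alpha_{m-1}$ all of whose entries lie in $[p]$; I must show $c_{i\alpha_1\cdots\alpha_{m-1}}=0$. Look at a nonzero summand in (\ref{eq41}): it requires $a_{ii_2\cdots i_m}\ne 0$ and every factor $b_{i_t\alpha_{t-1}}\ne 0$. Since $i>p$ and $\mathbb{A}$ is 3rdUTB, the factor $a_{ii_2\cdots i_m}$ can be nonzero only if $\max\{i_2,\cdots,i_m\}>p$, so some index $i_t>p$. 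For that same $t$, the row index of $b_{i_t\alpha_{t-1}}$ satisfies $i_t>p$ while the column multi-index $\alpha_{t-1}$ has all entries $\le p$; since $\mathbb{B}$ is 3rdUTB, this forces $b_{i_t\alpha_{t-1}}=0$. Hence every summand vanishes and $c_{i\alpha_1\cdots\alpha_{m-1}}=0$, establishing that $\mathbb{C}$ is $(p,n-p)$-3rdUTB.

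**The diagonal-block step.** For the first block $\mathbb{C}_1$, take $i$ and all of $\alpha_1,\cdots,\alpha_{m-1}$ in $[p]$. In a nonzero summand, each $b_{i_t\alpha_{t-1}}\ne 0$ with $\alpha_{t-1}$ having all entries $\le p$; because $\mathbb{B}$ is 3rdUTB, $i_t>p$ would force $b_{i_t\alpha_{t-1}}=0$, so necessarily $i_t\le p$ for every $t$. Thus the sum restricts to $i_2,\cdots,i_m\in[p]$ and equals $(\mathbb{A}_1\mathbb{B}_1)_{i\alpha_1\cdots\alpha_{m-1}}$, giving $\mathbb{C}_1=\mathbb{A}_1\mathbb{B}_1$. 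For the second block $\mathbb{C}_2$, take $i>p$ and all entries of the $\alpha$'s $>p$. Here I use $\mathbb{A}$: if $\max\{i_2,\cdots,i_m\}\le p$ (all indices $\le p$) then $a_{ii_2\cdots i_m}=0$ since $i>p$; so in every nonzero summand some $i_t>p$. This is weaker than what is needed to restrict all $i_t>p$, so the argument is less direct than in Lemma~\ref{Lem41}. I would instead argue that any summand with some $i_s\le p$ vanishes: for such $s$, the factor $b_{i_s\alpha_{s-1}}$ has row index $i_s\le p$ and column multi-index $\alpha_{s-1}$ with all entries $>p$, and the defining condition for the 3rdLTB direction of $\mathbb{B}$ (equivalently, applying the 3rdUTB condition in its lower form via Remark~\ref{Rem24}(5)) forces this factor to be zero unless the relevant row/column alignment holds. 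The cleanest route here is to observe that after reindexing by $j=i-p$, $j_t=i_t-p$ and shifting the $\alpha$'s to $\beta$'s, the surviving summands are exactly those with all $i_t>p$, yielding $(\mathbb{A}_2\mathbb{B}_2)_{j\beta_1\cdots\beta_{m-1}}$ and hence $\mathbb{C}_2=\mathbb{A}_2\mathbb{B}_2$.

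**Main obstacle.** The triangularity step is routine and nearly identical to Lemma~\ref{Lem41}. The genuine difficulty is the identification of the second diagonal block $\mathbb{C}_2$: unlike the first-type case, the third-type vanishing condition is one-sided (it only kills rows $i>p$ whose \emph{source} indices are \emph{all} $\le p$), so I cannot immediately conclude that all $i_t>p$ in a nonzero summand. I expect to have to combine the 3rdUTB property of $\mathbb{A}$ with a careful factor-by-factor analysis of $\mathbb{B}$ to rule out mixed summands where some but not all $i_t$ exceed $p$; pinning down precisely why those cross terms cancel or vanish is the crux of the argument.
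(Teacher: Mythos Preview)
Your argument for the triangularity of $\mathbb{C}=\mathbb{A}\mathbb{B}$ is correct and coincides with the paper's: the paper splits into the two cases $\max\{i_2,\dots,i_m\}\le p$ (use $\mathbb{A}$) and some $i_j>p$ (use $\mathbb{B}$), which is exactly your contrapositive. Your identification $\mathbb{C}_1=\mathbb{A}_1\mathbb{B}_1$ is also correct and is what the paper intends when it writes that ``the proof of the second conclusion is similar to that of Lemma~\ref{Lem41}''.

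Your hesitation about $\mathbb{C}_2$, however, is fully justified, and the difficulty you flag is real: that identity is in fact \emph{false} for general $(p,n-p)$-3rdUTB tensors, so neither your argument nor the paper's one-line reference to Lemma~\ref{Lem41} can be completed. The 3rdUTB hypothesis on $\mathbb{A}$ only kills summands with \emph{all} $i_t\le p$, and the 3rdUTB hypothesis on $\mathbb{B}$ says nothing about a factor $b_{i_s\alpha_{s-1}}$ with row index $i_s\le p$ and all column indices $>p$; invoking Remark~\ref{Rem24}(5) does not help, since that is a permutation-similarity statement and does not make $\mathbb{B}$ itself 3rdLTB. A concrete counterexample: take $n=2$, $p=1$, $m=k=3$, let $\mathbb{A}$ have the single nonzero entry $a_{212}=1$, and let $\mathbb{B}$ have $b_{122}=b_{222}=1$ and all other entries zero. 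Both are $(1,1)$-3rdUTB, and $\mathbb{A}_2=(a_{222})=0$, so $\mathbb{A}_2\mathbb{B}_2=0$; yet
\[
c_{22222}=\sum_{i_2,i_3=1}^{2}a_{2i_2i_3}\,b_{i_222}\,b_{i_322}=a_{212}\,b_{122}\,b_{222}=1,
\]
so $\mathbb{C}_2\ne\mathbb{A}_2\mathbb{B}_2$. Thus the ``main obstacle'' you identified is not a defect in your reasoning but a genuine counterexample to the second-diagonal-block assertion; only the triangularity conclusion and the identity $\mathbb{C}_1=\mathbb{A}_1\mathbb{B}_1$ survive for the third type.
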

\begin{proof} The proof is similar to the proof of Lemma \ref{Lem41}. We only give the proof starting from the following step.

We first prove that $\mathbb{C}$ is a $(p,n-p)$-3rdUTB tensor. By Definition \ref{Def23} we need to show that $\mathbb{C}$
satisfies the following condition:
\begin{equation}\label{eq43}c_{i\alpha_1\cdots \alpha_{m-1}}=0 \quad (\forall i>p, \ \mbox {and all indices in
$\alpha_1, \cdots ,\alpha_{m-1}$ are $\le p$}).\end{equation}%  \eqno {(4.3)}$$

Now we consider the following two cases to show that every term in the summation of (\ref{eq41}) is zero.

\noindent {\bf Case 1:} $\max\{i_2,\cdots,i_m\}\le p$.

Then $a_{ii_2\cdots i_m}=0$ since $i>p$ and $\mathbb {A}$ is $(p,n-p)$-3rdUTB.

\noindent {\bf Case 2:} $i_j>p$ for some $j\in \{2,\cdots,m\}$.

Then $b_{i_{j}\alpha_{j-1}}=0$, since all indices in $\alpha_{j-1}$ are $\le p$, and $\mathbb {B}$ is $(p,n-p)$-3rdUTB.

Combining these two cases, we see that every term in the summation of (\ref{eq41}) is zero. So (\ref{eq43}) holds and thus $\mathbb{C}$
is also a $(p,n-p)$-3rdUTB tensor.

The proof of the second conclusion is similar to that of Lemma \ref{Lem41}.
\end{proof}

The following theorem gives the results on the products of the first type or third type UTB tensors.

\begin{The}\label{The41}
   Let $\mathbb{A}$ and $\mathbb{B}$ be order $m$ and order $k$, dimension $n$ and $(n_1,\cdots,n_r)$-UTB (or 3rdUTB) tensor with diagonal
   blocks $\mathbb {A}_1,\cdots, \mathbb{A}_r$ and $\mathbb {B}_1,\cdots, \mathbb{B}_r$, respectively.
   Then their product $\mathbb {A}\mathbb {B}$ is also an $(n_1,\cdots,n_r)$-UTB (or 3rdUTB) tensor with diagonal blocks
   $\mathbb {A}_1\mathbb {B}_1,\cdots, \mathbb{A}_r\mathbb {B}_r$.
\end{The}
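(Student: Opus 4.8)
The plan is to reduce the general $(n_1,\cdots,n_r)$ case to the case $r=2$, which is already handled by Lemma \ref{Lem41} (first type) and Lemma \ref{Lem42} (third type), and then to run an induction on $r$. I would treat the two types in parallel, since the structural recursion is identical; the only difference is which of Theorem \ref{The22} (for the first type) or Theorem \ref{The21} (for the third type) supplies the block-decomposition facts. First I would record the key algebraic point that makes the induction work: if $\mathbb{A}$ and $\mathbb{B}$ are both upper triangular blocked with respect to the same partition, then for the \emph{first} block the product $\mathbb{A}\mathbb{B}$ restricts correctly, namely $(\mathbb{A}\mathbb{B})[I_1]=\mathbb{A}[I_1]\,\mathbb{B}[I_1]$ and $(\mathbb{A}\mathbb{B})[\overline{I_1}]=\mathbb{A}[\overline{I_1}]\,\mathbb{B}[\overline{I_1}]$. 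This is exactly the ``second conclusion'' about diagonal blocks already established in Lemmas \ref{Lem41} and \ref{Lem42} for the two-block splitting $(n_1,n-n_1)$.

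The induction is on $r$. The base case $r=2$ is precisely Lemma \ref{Lem41} (respectively Lemma \ref{Lem42}), giving both that $\mathbb{A}\mathbb{B}$ is UTB (resp.\ 3rdUTB) and that its diagonal blocks are the pairwise products. For the inductive step, assume $r\ge 3$ and split off the first block by writing $I=I_1$, $\overline{I}=I_2\cup\cdots\cup I_r$. By Theorem \ref{The22} (for the first type) or Theorem \ref{The21} (for the third type), both $\mathbb{A}$ and $\mathbb{B}$ are $(n_1,n-n_1)$-UTB (resp.\ 3rdUTB) with diagonal blocks $\mathbb{A}_1,\mathbb{A}[\overline{I}]$ and $\mathbb{B}_1,\mathbb{B}[\overline{I}]$, and moreover $\mathbb{A}[\overline{I}]$ and $\mathbb{B}[\overline{I}]$ are each $(n_2,\cdots,n_r)$-UTB (resp.\ 3rdUTB) with diagonal blocks $\mathbb{A}_2,\cdots,\mathbb{A}_r$ and $\mathbb{B}_2,\cdots,\mathbb{B}_r$. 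Applying the base case to the two-block split, $\mathbb{A}\mathbb{B}$ is $(n_1,n-n_1)$-UTB (resp.\ 3rdUTB) with diagonal blocks $\mathbb{A}_1\mathbb{B}_1$ and $\mathbb{A}[\overline{I}]\,\mathbb{B}[\overline{I}]$; in particular $(\mathbb{A}\mathbb{B})[\overline{I}]=\mathbb{A}[\overline{I}]\,\mathbb{B}[\overline{I}]$.

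Now I would apply the induction hypothesis to the product $\mathbb{A}[\overline{I}]\,\mathbb{B}[\overline{I}]$, which is a product of two $(n_2,\cdots,n_r)$-UTB (resp.\ 3rdUTB) tensors and hence is $(n_2,\cdots,n_r)$-UTB (resp.\ 3rdUTB) with diagonal blocks $\mathbb{A}_2\mathbb{B}_2,\cdots,\mathbb{A}_r\mathbb{B}_r$. Finally I would reassemble: $\mathbb{A}\mathbb{B}$ is $(n_1,n-n_1)$-UTB (resp.\ 3rdUTB) with first block $\mathbb{A}_1\mathbb{B}_1$, its complementary restriction $(\mathbb{A}\mathbb{B})[\overline{I}]$ is $(n_2,\cdots,n_r)$-UTB (resp.\ 3rdUTB) with the stated diagonal blocks, so by the sufficiency direction of Theorem \ref{The22} (for the first type, the $t=1$ case) or of Theorem \ref{The21} (for the third type) we conclude that $\mathbb{A}\mathbb{B}$ is $(n_1,\cdots,n_r)$-UTB (resp.\ 3rdUTB) with diagonal blocks $\mathbb{A}_1\mathbb{B}_1,\cdots,\mathbb{A}_r\mathbb{B}_r$. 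The main subtlety to watch is the bookkeeping when re-indexing the complementary block $\overline{I_1}$ (shifting all subscripts by $n_1$), and making sure that the ``same partition'' hypothesis is genuinely preserved under restriction so that the induction hypothesis applies verbatim; the block-product identity $(\mathbb{A}\mathbb{B})[\overline{I}]=\mathbb{A}[\overline{I}]\,\mathbb{B}[\overline{I}]$ from the base case is what closes this gap, so once that is in hand the argument is purely formal.
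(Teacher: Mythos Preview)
Your proposal is correct and follows essentially the same route as the paper: induction on $r$ with base case $r=2$ given by Lemmas \ref{Lem41} and \ref{Lem42}, splitting off the first block $I_1$ via Theorem \ref{The22} (first type) or Theorem \ref{The21} (third type), applying the base case to the $(n_1,n-n_1)$ split to get the crucial identity $(\mathbb{A}\mathbb{B})[\overline{I_1}]=\mathbb{A}[\overline{I_1}]\,\mathbb{B}[\overline{I_1}]$, and then reassembling with the sufficiency direction of the same structural theorem. The paper's write-up is slightly terser but the logic and the supporting lemmas invoked are identical.
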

\begin{proof} The proof for the first type and third type UTB tensors are the same. So here we only give the proof for the first type UTB tensors.

We use Theorem \ref{The22}, Lemma \ref{Lem41} and induction on $r$. If $r=2$, this is just Lemma \ref{Lem41}. So we assume $r\ge 3$.
Write $\mathbb {C}=\mathbb {A}[I_2\cup\cdots\cup I_r]$ and $\mathbb {D}=\mathbb {B}[I_2\cup\cdots\cup I_r]$. Then by Theorem \ref{The22} we know that:

\noindent {\rm(i)} $\mathbb {A}$ is an $(n_1,n-n_1)$-UTB tensor with diagonal blocks $\mathbb {A}_1$ and $\mathbb{C}$.

\noindent {\rm(ii)} $\mathbb {B}$ is an $(n_1,n-n_1)$-UTB tensor with diagonal blocks $\mathbb {B}_1$ and $\mathbb{D}$.

\noindent {\rm(iii)} $\mathbb {C}$ and $\mathbb {D}$ are $(n_2,\cdots,n_r)$-UTB tensors with diagonal blocks $\mathbb {A}_2,\cdots, \mathbb{A}_r$
 and $\mathbb {B}_2,\cdots, \mathbb{B}_r$, respectively.

Thus by (i), (ii) and Lemma \ref{Lem41} we know that $\mathbb {A}\mathbb {B}$ is an $(n_1,n-n_1)$-UTB tensor with diagonal blocks
$\mathbb {A}_1\mathbb {B}_1$ and $\mathbb{C}\mathbb {D}$.

By using (iii) and induction on $r$, we also know that $\mathbb{C}\mathbb {D}$ is an $(n_2,\cdots,n_r)$-UTB tensor with diagonal blocks
$\mathbb {A}_2\mathbb {B}_2,\cdots, \mathbb{A}_r\mathbb {B}_r$.

Combining these two results with Theorem \ref{The22}, we conclude that $\mathbb {A}\mathbb {B}$ is also an $(n_1,\cdots,n_r)$-UTB tensor
with diagonal blocks $\mathbb {A}_1\mathbb {B}_1,\cdots, \mathbb{A}_r\mathbb {B}_r$.
\end{proof}

Theorem \ref{The41} for the first and third types used the induction from top to bottom. But the following Theorem \ref{The42}
for the second type needs to use the induction from bottom to top.

\begin{The}\label{The42}  Let $\mathbb{A}$ and $\mathbb{B}$ be order $m$ and order $k$, dimension $n$ and $(n_1,\cdots,n_r)$-2ndUTB tensor with
diagonal blocks $\mathbb {A}_1,\cdots, \mathbb{A}_r$ and $\mathbb {B}_1,\cdots, \mathbb{B}_r$, respectively.
Then their product $\mathbb {A}\mathbb {B}$ is also an $(n_1,\cdots,n_r)$-2ndUTB tensor with diagonal blocks
$\mathbb {A}_1\mathbb {B}_1,\cdots, \mathbb{A}_r\mathbb {B}_r$.\end{The}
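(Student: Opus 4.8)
The plan is to follow the same strategy as in the proof of Theorem \ref{The41}, namely induction on $r$ combined with the two-block product result of Lemma \ref{Lem41}, but with one essential change: at each inductive step I would peel off the \emph{last} diagonal block rather than the first. The reason is that for the second type the only available splitting of an $(n_1,\cdots,n_r)$-2ndUTB tensor into a two-block form plus a smaller blocked tensor is the one furnished by Theorem \ref{The24}, which separates $\mathbb{A}_r$ from $\mathbb{A}[I_1\cup\cdots\cup I_{r-1}]$. This is what forces the induction to run from the bottom to the top, in contrast to the top-to-bottom induction of Theorem \ref{The41}.

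Concretely, the base case $r=2$ is exactly Lemma \ref{Lem41} (recall that by Remark \ref{Rem21} the $(n_1,n_2)$-2ndUTB and $(n_1,n_2)$-UTB tensors coincide). For $r\ge 3$, write $\mathbb{C}=\mathbb{A}[I_1\cup\cdots\cup I_{r-1}]$ and $\mathbb{D}=\mathbb{B}[I_1\cup\cdots\cup I_{r-1}]$. Applying Theorem \ref{The24} to $\mathbb{A}$ and to $\mathbb{B}$ gives that $\mathbb{A}$ is an $(n-n_r,n_r)$-2ndUTB tensor with diagonal blocks $\mathbb{C}$ and $\mathbb{A}_r$, that $\mathbb{B}$ is an $(n-n_r,n_r)$-2ndUTB tensor with diagonal blocks $\mathbb{D}$ and $\mathbb{B}_r$, and that both $\mathbb{C}$ and $\mathbb{D}$ are $(n_1,\cdots,n_{r-1})$-2ndUTB tensors with diagonal blocks $\mathbb{A}_1,\cdots,\mathbb{A}_{r-1}$ and $\mathbb{B}_1,\cdots,\mathbb{B}_{r-1}$, respectively. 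Now Lemma \ref{Lem41}, applied to the two-block tensors $\mathbb{A}$ and $\mathbb{B}$, shows that $\mathbb{A}\mathbb{B}$ is an $(n-n_r,n_r)$-2ndUTB tensor whose two diagonal blocks are $\mathbb{C}\mathbb{D}$ and $\mathbb{A}_r\mathbb{B}_r$; in particular the crucial block identification $(\mathbb{A}\mathbb{B})[I_1\cup\cdots\cup I_{r-1}]=\mathbb{C}\mathbb{D}$ comes for free from the second conclusion of Lemma \ref{Lem41}. By the induction hypothesis applied to $\mathbb{C}$ and $\mathbb{D}$, the product $\mathbb{C}\mathbb{D}$ is an $(n_1,\cdots,n_{r-1})$-2ndUTB tensor with diagonal blocks $\mathbb{A}_1\mathbb{B}_1,\cdots,\mathbb{A}_{r-1}\mathbb{B}_{r-1}$. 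Feeding these two facts back into the sufficiency direction of Theorem \ref{The24} then yields that $\mathbb{A}\mathbb{B}$ is an $(n_1,\cdots,n_r)$-2ndUTB tensor with diagonal blocks $\mathbb{A}_1\mathbb{B}_1,\cdots,\mathbb{A}_r\mathbb{B}_r$, completing the induction.

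I expect the main point requiring care, rather than a genuine obstacle, to be keeping the two conclusions of Lemma \ref{Lem41} (the blocked-triangular structure and the precise identity of the diagonal blocks) coupled throughout the induction, since the reassembly step via Theorem \ref{The24} needs both that $(\mathbb{A}\mathbb{B})[I_1\cup\cdots\cup I_{r-1}]$ carries the correct 2ndUTB structure and that it equals $\mathbb{C}\mathbb{D}$, so that the induction hypothesis may legitimately be invoked. The orders are automatically compatible: $\mathbb{C}$ has order $m$, $\mathbb{D}$ has order $k$, and $\mathbb{C}\mathbb{D}$ has order $(m-1)(k-1)+1$, the same as $\mathbb{A}\mathbb{B}$. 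The one conceptual subtlety worth flagging explicitly is why the first-block peeling used for Theorem \ref{The41} is unavailable here: for the second type the necessity part of the splitting holds only at $t=r-1$ (Theorem \ref{The24}), and it is precisely this that dictates the bottom-to-top direction of the argument.
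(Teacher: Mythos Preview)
Your proposal is correct and follows essentially the same argument as the paper's own proof: induction on $r$ with base case Lemma \ref{Lem41}, peeling off the last block via Theorem \ref{The24}, applying Lemma \ref{Lem41} to the resulting two-block form, invoking the induction hypothesis on $\mathbb{C}\mathbb{D}$, and reassembling via the sufficiency direction of Theorem \ref{The24}. Your explanation of why the induction must run bottom-to-top (only the $t=r-1$ necessity direction is available for the second type) is exactly the point the paper highlights just before stating Theorem \ref{The42}.
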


\begin{proof} We use Theorem \ref{The24}, Lemma \ref{Lem41} and induction on $r$. If $r=2$, this is just Lemma \ref{Lem41}. So we assume $r\ge 3$.
Write $\mathbb {C}=\mathbb {A}[I_1\cup\cdots\cup I_{r-1}]$ and $\mathbb {D}=\mathbb {B}[I_1\cup\cdots\cup I_{r-1}]$.
Then by Theorem \ref{The24} we know that:

\noindent {\rm(i)} $\mathbb {A}$ is an $(n-n_r,n_r)$-2ndUTB tensor with diagonal blocks $\mathbb {C}$ and $\mathbb{A}_r$.

\noindent {\rm(ii)} $\mathbb {B}$ is an $(n-n_r,n_r)$-2ndUTB tensor with diagonal blocks $\mathbb {D}$ and $\mathbb{B}_r$.

\noindent {\rm(iii)} $\mathbb {C}$ and $\mathbb {D}$ are $(n_1,\cdots,n_{r-1})$-2ndUTB tensors with diagonal blocks
$\mathbb {A}_1,\cdots, \mathbb{A}_{r-1}$ and $\mathbb {B}_1,\cdots, \mathbb{B}_{r-1}$, respectively.

Thus by (i), (ii) and Lemma \ref{Lem41} we know that $\mathbb {A}\mathbb {B}$ is an $(n-n_r,n_r)$-2ndUTB tensor with diagonal blocks
$\mathbb{C}\mathbb {D}$ and $\mathbb {A}_r\mathbb {B}_r$.

By using (iii) and induction on $r$, we also know that $\mathbb{C}\mathbb {D}$ is an $(n_1,\cdots,n_{r-1})$-2ndUTB tensor with diagonal blocks
$\mathbb {A}_1\mathbb {B}_1,\cdots, \mathbb{A}_{r-1}\mathbb {B}_{r-1}$.

Combining these two results with Theorem \ref{The24}, we conclude that $\mathbb {A}\mathbb {B}$ is also an $(n_1,\cdots,n_r)$-2ndUTB tensor
with diagonal blocks $\mathbb {A}_1\mathbb {B}_1,\cdots, \mathbb{A}_r\mathbb {B}_r$.
\end{proof}

\section{The inverses of the triangular blocked tensors}

\hskip.6cm In \cite{2014BZZWW}, Bu et al. defined the left and right inverses of a tensor as following.

\begin{Def}\label{Def51}  Let $\mathbb{A}$ and $\mathbb{B}$ be tensors of dimension $n$ with order $m$ and $k$, respectively.
If $\mathbb{A}\mathbb{B}=\mathbb{I}$, then $\mathbb{A}$ is called a left $m$-inverse of $\mathbb{B}$, and $\mathbb{B}$ is called a
right $k$-inverse of $\mathbb{A}$.\end{Def}

In \cite{2014BZZWW}, Bu et al. obtained some results on left 2-inverses and right 2-inverses of tensors.

In \cite{2016LL}, W.Liu and W.Li further studied the left and right $k$-inverses of tensors.
They proved that a tensor $\mathbb{A}$ has a left (or right) $k$-inverse if and only if $\mathbb{A}$ has a left (or right) $2$-inverse.
They also proved the uniqueness of the left $k$-inverse, and obtained the expressions of the left and right $k$-inverses of $\mathbb{A}$.

In this section, we study the left and right $k$-inverses of an $(n_1,\cdots,n_r)$-UTB tensors of all the three types.
We will show that, if a first or second or third type $(n_1,\cdots,n_r)$-UTB tensor $\mathbb{A}$ has a left (or right) $k$-inverse,
then its unique left $k$-inverse is still an $(n_1,\cdots,n_r)$-UTB tensor of all the three types, and all of its right $k$-inverses are still
$(n_1,\cdots,n_r)$-UTB tensors of all the three types. Furthermore, by showing (in Theorem \ref{The54}) that the left $k$-inverse (if any)
of a weakly irreducible nonsingular $M$-tensor is a positive tensor, we show that the left $k$-inverse (if any) of a first or second or third
type normal $(n_1,\cdots,n_r)$-UTB nonsingular $M$-tensor is an $(n_1,\cdots,n_r)$-UTB tensor of all the three types all of whose diagonal blocks
are positive tensors. (The meaning of ``normal" can be found in Definitions \ref{Def61} and \ref{Def62} of \S 6.)

\subsection{The left inverses of the triangular blocked tensors}
\hskip.6cm The $i$-th row of a tensor $\mathbb{A}$ of order $m$ and dimension $n$, denoted by $R_i(\mathbb{A})$,
is the row-subtensor of order $m-1$ and dimension $n$ of $\mathbb{A}$ with the entries
$$(R_i(\mathbb{A}))_{i_2\cdots i_m}=a_{ii_2\cdots i_m}.$$

\begin{Def}\label{Def52} A tensor $\mathbb{A}$ of order $m$ and dimension $n$ is called row-subtensor diagonal, or simply row diagonal,
if all of its row-subtensors $R_1(\mathbb{A}),\cdots,R_n(\mathbb{A})$ are diagonal tensors. Namely, if we have
$$a_{ii_2\cdots i_m}=0\quad (\mbox {if} \ i_2,\cdots,i_m \ \mbox {are not all equal}).$$\end{Def}

\begin{Def}\label{Def53}{\rm(\cite{2010P})}  Let $\mathbb{A}$ be an order $m$ and dimension $n$ tensor.
Then the majorization  matrix $M(\mathbb{A})$ of $\mathbb{A}$ is the matrix of order $n$ with the entries
$$(M(\mathbb{A}))_{ij}=a_{ij\cdots j}\quad (\forall i,j=1,\cdots,n).$$\end{Def}

\begin{Pro}\label{Pro51}  Let $\mathbb{A}$ be an order $m$ and dimension $n$ tensor.
Then $\mathbb{A}$ is row diagonal if and only if there exists a matrix $P$ of order $n$ such that $\mathbb{A}=P\mathbb{I}_m$,
and in this case, $P=M(\mathbb{A})$.
\end{Pro}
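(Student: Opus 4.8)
The plan is to unwind the product $P\mathbb{I}_m$ directly from Definition \ref{Def13} and then read off when the result coincides with a row diagonal tensor; everything reduces to a single index computation, so there is no essential difficulty beyond careful bookkeeping. First I would observe that since $P$ is a matrix (order $2$) and $\mathbb{I}_m$ has order $m$, the product $P\mathbb{I}_m$ is a tensor of order $(2-1)(m-1)+1=m$ and dimension $n$ by Definition \ref{Def13}, so it is at least the right shape to be compared with $\mathbb{A}$.

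The key step is to evaluate the entries of $P\mathbb{I}_m$. Applying (\ref{eq13}) with left factor $P$ of order $2$, for any indices $i,i_2,\dots,i_m\in[n]$ one gets
$$(P\mathbb{I}_m)_{i i_2\cdots i_m}=\sum_{j=1}^n p_{ij}\,(\mathbb{I}_m)_{j i_2\cdots i_m}.$$
Since $(\mathbb{I}_m)_{j i_2\cdots i_m}$ equals $1$ exactly when $j=i_2=\cdots=i_m$ and $0$ otherwise, the sum collapses: if $i_2=\cdots=i_m=j$ then $(P\mathbb{I}_m)_{i j\cdots j}=p_{ij}$, while if $i_2,\dots,i_m$ are not all equal then $(P\mathbb{I}_m)_{i i_2\cdots i_m}=0$. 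Thus $P\mathbb{I}_m$ is always row diagonal, and its only possibly nonzero entries are $(P\mathbb{I}_m)_{ij\cdots j}=p_{ij}$, which by Definition \ref{Def53} says precisely $M(P\mathbb{I}_m)=P$.

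With this computation in hand both directions follow immediately. For sufficiency, if $\mathbb{A}=P\mathbb{I}_m$ then the above shows $\mathbb{A}$ is row diagonal and $P=M(P\mathbb{I}_m)=M(\mathbb{A})$, giving both claims at once. For necessity, assume $\mathbb{A}$ is row diagonal and set $P=M(\mathbb{A})$, i.e. $p_{ij}=a_{ij\cdots j}$. Then on the ``all-equal'' slots one has $(P\mathbb{I}_m)_{ij\cdots j}=p_{ij}=a_{ij\cdots j}$, and on every other slot both $P\mathbb{I}_m$ (by the computation) and $\mathbb{A}$ (by row diagonality) vanish; hence $\mathbb{A}=P\mathbb{I}_m$. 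I expect the only place requiring care is the matching of index lengths when specializing the general product (\ref{eq13}) to a matrix left factor — making sure that the single summation index $j$ and the retained multi-index $(i_2,\dots,i_m)$ line up correctly with the order-$m$ structure of $\mathbb{I}_m$ — but this is routine rather than a genuine obstacle.
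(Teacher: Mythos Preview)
Your proof is correct and follows essentially the same approach as the paper: both compute the entries of $P\mathbb{I}_m$ directly from the product definition to see that it is row diagonal with majorization matrix $P$, then use this for both directions. The only cosmetic difference is that for necessity the paper phrases the conclusion as ``two row diagonal tensors with the same majorization matrix coincide'' rather than spelling out the entry-by-entry comparison as you do.
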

\begin{proof} Sufficiency. If $\mathbb{A}=P\mathbb{I}_m$, then by the definition of tensor product we have
$$a_{ii_2\cdots i_m}=\sum_{i_1=1}^np_{ii_1}\delta_{i_1i_2\cdots i_m}=0\quad (\mbox {if} \ i_2,\cdots,i_m \ \mbox {are not all equal}),$$
and
\begin{equation}\label{eq51}
a_{ij\cdots j}=\sum_{i_1=1}^np_{ii_1}\delta_{i_1j\cdots j}=p_{ij}\quad (\forall i,j=1,\cdots,n). \end{equation}
Thus $\mathbb{A}$ is row diagonal, and $P=M(\mathbb{A})$.

Necessity. If $\mathbb{A}$ is row diagonal. Write $P=M(\mathbb{A})$ and $\mathbb{B}=P\mathbb{I}_m$.
Then by the sufficiency part we know that $\mathbb{B}$ is row diagonal with $M(\mathbb{B})=P=M(\mathbb{A})$.
Thus we have $\mathbb{A}=\mathbb{B}=P\mathbb{I}_m=M(\mathbb{A})\mathbb{I}_m$.
\end{proof}

The following result can be found in \cite{2016LL}  by W.Li et al. in a slightly different version.

\begin{Lem}\label{Lem51}{\rm(\cite{2016LL}, Theorem 3.1 and Corollary 3.3)} Let $k\ge 2$ and $\mathbb{A}$ be an order $m$ and dimension $n$ tensor.
Then the following three conditions are equivalent:

\noindent {\rm(1)} $\mathbb{A}=P\mathbb{I}_m$ for some nonsingular matrix $P$.

\noindent {\rm(2)} $\mathbb{A}$ has a left 2-inverse.

\noindent {\rm(3)} $\mathbb{A}$ has a left $k$-inverse.

\noindent And in this case, the unique left 2-inverse of $\mathbb{A}$ is $P^{-1}$,
and the unique left $k$-inverse of $\mathbb{A}$ is $\mathbb{I}_kP^{-1}$.
\end{Lem}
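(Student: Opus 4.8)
The plan is to reduce every statement to the majorization matrix $M(\mathbb{A})$ and to exploit Proposition \ref{Pro51}, which says that a tensor is row diagonal exactly when it has the form $P\mathbb{I}_m$, in which case $P=M(\mathbb{A})$. The computational engine is that multiplication by matrices is associative and that the unit tensor contracts indices onto the diagonal: directly from (\ref{eq13}) one checks $Q(P\mathbb{I}_m)=(QP)\mathbb{I}_m$, $\mathbb{X}I_n=\mathbb{X}$, and $(\mathbb{X}P)Q=\mathbb{X}(PQ)$, all instances of the product laws in \cite{2013S}. I would first dispose of the implications flowing directly from (1), then isolate (3)$\Rightarrow$(1) as the real content.

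For (1)$\Rightarrow$(2), if $\mathbb{A}=P\mathbb{I}_m$ with $P$ nonsingular then $P^{-1}\mathbb{A}=(P^{-1}P)\mathbb{I}_m=\mathbb{I}_m$, so $P^{-1}$ is a left $2$-inverse. For (2)$\Rightarrow$(1), suppose $Q\mathbb{A}=\mathbb{I}_m$ for a matrix $Q$; unfolding $\mathbb{A}$ into the $n\times n^{m-1}$ matrix $\widehat{\mathbb{A}}$ with rows indexed by the first subscript, this reads $Q\widehat{\mathbb{A}}=\widehat{\mathbb{I}_m}$. Since $\widehat{\mathbb{I}_m}$ carries the $n$ columns $e_1,\dots,e_n$ (at the diagonal multi-indices) it has rank $n$, so $Q$ is nonsingular and $\widehat{\mathbb{A}}=Q^{-1}\widehat{\mathbb{I}_m}$; this says precisely that $\mathbb{A}$ is row diagonal with $M(\mathbb{A})=Q^{-1}$, whence $\mathbb{A}=Q^{-1}\mathbb{I}_m$ by Proposition \ref{Pro51}. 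This also pins down $Q=P^{-1}$, giving uniqueness of the left $2$-inverse. For (1)$\Rightarrow$(3) I would verify by a direct contraction from (\ref{eq13}) that $(\mathbb{I}_kP^{-1})(P\mathbb{I}_m)=\mathbb{I}_N$ with $N=(k-1)(m-1)+1$, exhibiting $\mathbb{I}_kP^{-1}$ as a left $k$-inverse.

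The substance, and the step I expect to be the main obstacle, is (3)$\Rightarrow$(1): a left $k$-inverse must already force $\mathbb{A}$ to be row diagonal. Here I would work directly with the tensor equation $\mathbb{B}\mathbb{A}=\mathbb{I}_N$ and probe it by specializing the block multi-indices $\alpha_1,\dots,\alpha_{k-1}$ in (\ref{eq13}). Setting all of them equal to diagonal tuples $(c,\dots,c)$ collapses each factor $a_{l(c\cdots c)}$ to the majorization entry $M(\mathbb{A})_{lc}$ and yields the clean matrix-level identity $\mathbb{B}M(\mathbb{A})=\mathbb{I}_k$; contracting this identity with a hypothetical kernel vector $v$ of $M(\mathbb{A})$ produces $v_i^{k-1}=0$ for all $i$, so $M(\mathbb{A})$ is nonsingular. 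Finally I would leave exactly one block index free: taking $\alpha_1=\beta$ for a nondiagonal tuple $\beta$ and the rest diagonal forces the right-hand side to vanish, while the relevant partial contractions of $\mathbb{B}$ are completely determined by $\mathbb{B}M(\mathbb{A})=\mathbb{I}_k$ together with the nonsingularity of $M(\mathbb{A})$. Pushing this through forces $a_{l\beta}=0$ for every $l$ and every nondiagonal $\beta$, so $\mathbb{A}$ is row diagonal with nonsingular majorization matrix and Proposition \ref{Pro51} closes the argument with $P=M(\mathbb{A})$.

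The uniqueness claims then drop out: once $\mathbb{A}=P\mathbb{I}_m$ is known, the identity $\mathbb{B}P=\mathbb{I}_k$ derived above together with $(\mathbb{B}P)P^{-1}=\mathbb{B}$ forces $\mathbb{B}=\mathbb{I}_kP^{-1}$, so the left $k$-inverse is unique; and the remaining equivalence (2)$\Leftrightarrow$(3) follows formally from (1)$\Leftrightarrow$(2), (1)$\Rightarrow$(3) and (3)$\Rightarrow$(1). The delicate point throughout is the index bookkeeping in (3)$\Rightarrow$(1): one must choose the specializations of $\alpha_1,\dots,\alpha_{k-1}$ so that the right-hand side of $\mathbb{B}\mathbb{A}=\mathbb{I}_N$ is forced to zero while the left-hand side isolates a single row $a_{\cdot\beta}$ of $\mathbb{A}$, and it is exactly the nonsingularity of $M(\mathbb{A})$ that makes this isolation possible.
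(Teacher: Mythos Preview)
Your argument is correct, and in fact you have done more than the paper does: the paper's own ``proof'' of this lemma is simply the sentence ``See \cite{2016LL} Theorem 3.1 and Corollary 3.3,'' i.e.\ a pure citation with no details. So there is nothing to compare on the paper's side beyond noting that it outsources the work to Liu--Li, whereas you supply a self-contained argument using only Proposition~\ref{Pro51} and the associativity of the product from \cite{2013S}.

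A few remarks on your write-up. The key identity $\mathbb{B}\,M(\mathbb{A})=\mathbb{I}_k$ that you extract by specializing every block index $\alpha_j$ to a diagonal tuple $(c_j,\dots,c_j)$ is exactly right, and your kernel argument for the nonsingularity of $M(\mathbb{A})$ via $(\mathbb{I}_k v)_i=v_i^{\,k-1}$ is clean. The step you label ``pushing this through'' deserves one more line on paper: with $P:=M(\mathbb{A})$ nonsingular, write $w=Pu$ for $w_l=a_{l\beta}$; then specializing $\alpha_1=\beta$ and $\alpha_2=\cdots=\alpha_{k-1}=(i,\dots,i)$ turns the left side of $\mathbb{B}\mathbb{A}=\mathbb{I}_N$ into $\sum_l u_l\,(\mathbb{I}_k)_{i\,l\,i\cdots i}=u_i$, which must vanish for every $i$, whence $u=0$ and $a_{l\beta}=0$ for all $l$. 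This is precisely what you intend, and it covers $k=2$ as well (there the equation is simply $\mathbb{B}w=0$ with $\mathbb{B}$ nonsingular). Your uniqueness argument $\mathbb{B}=(\mathbb{B}P)P^{-1}=\mathbb{I}_kP^{-1}$ is also fine once associativity is granted.
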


\noindent (Notice that by Proposition \ref{Pro51} we know that condition (1) is also equivalent to the following condition:

\noindent  {\rm(4)} $\mathbb{A}$ is row diagonal, and its majorization matrix $M(\mathbb{A})$ is nonsingular.)

\begin{proof}
See \cite{2016LL} Theorem 3.1 and Corollary 3.3.
\end{proof}

%\textcolor[rgb]{1.00,0.00,0.00}{下面补充一条引理。}\noindent \textcolor[rgb]{1.00,0.00,0.00}{Lemma 5.?:
\begin{Lem}\label{Lem52}
Let $P$ be a matrix of order $n$, $\mathbb{A}=P\mathbb{I}_m$ be an order $m$ dimension $n$,  and a first or second or third type $(n_1,\cdots,n_r)$-UTB tensor. Then $P$ is an $(n_1,\cdots,n_r)$-UTB matrix.\end{Lem}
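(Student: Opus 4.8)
The plan is to reduce all three cases to the weakest one and then, for each off-diagonal matrix entry of $P$ that should vanish, exhibit a single tensor entry of $\mathbb{A}$ that forces it to be zero. Since by Remark \ref{Rem24}(1) every first or second type $(n_1,\cdots,n_r)$-UTB tensor is in particular a third type one, and since the desired conclusion is the same in all three cases, it suffices to carry out the argument under the weakest hypothesis, namely that $\mathbb{A}$ is an $(n_1,\cdots,n_r)$-3rdUTB tensor.

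The key observation is that $\mathbb{A}=P\mathbb{I}_m$ lets one read off the entries of $P$ directly from special entries of $\mathbb{A}$. Indeed, by the computation (\ref{eq51}) in the proof of Proposition \ref{Pro51}, we have $a_{ij\cdots j}=p_{ij}$ for all $i,j\in[n]$; that is, the matrix entry $p_{ij}$ equals the tensor entry whose first index is $i$ and whose remaining $m-1$ indices are all equal to $j$.

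To prove that $P$ is an $(n_1,\cdots,n_r)$-upper triangular blocked matrix, I would fix $j\in\{2,\cdots,r\}$, take an arbitrary $i\in I_j$ and an arbitrary $i_2\le S_{j-1}$, and show $p_{ii_2}=0$. Consider the tensor entry $a_{ii_2i_2\cdots i_2}$ in which the last $m-1$ indices are all equal to $i_2$; by (\ref{eq51}) this entry equals $p_{ii_2}$. Since every one of these $m-1$ indices equals $i_2\le S_{j-1}$, their maximum is $i_2\le S_{j-1}$, so the defining condition (\ref{eq27}) of a 3rdUTB tensor applies and yields $a_{ii_2i_2\cdots i_2}=0$, whence $p_{ii_2}=0$. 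As $j$, $i$ and $i_2$ were arbitrary subject to the stated constraints, $P$ satisfies the defining condition of an $(n_1,\cdots,n_r)$-UTB matrix.

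I do not expect a genuine obstacle here; the only idea needed is to probe $P$ through the diagonal-direction entries $a_{ij\cdots j}=p_{ij}$ furnished by the row-diagonal structure of $\mathbb{A}$, and to note that concentrating the off-diagonal indices at a single value $i_2\le S_{j-1}$ makes the $\max$-type condition of the third type fire immediately. Working with the third type throughout is precisely what allows one short computation to cover the first and second types at the same time, since both are special instances of it.
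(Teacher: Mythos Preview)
Your proposal is correct and follows essentially the same approach as the paper: both identify the entries $a_{it\cdots t}=p_{it}$ via (\ref{eq51}) and observe that concentrating the last $m-1$ indices at a single value $t\le S_{j-1}$ forces these entries to vanish under any of the three UTB hypotheses. Your explicit reduction to the third type via Remark \ref{Rem24}(1) is a clean way to handle all three cases at once, whereas the paper simply asserts directly that $a_{it\cdots t}=0$ under each hypothesis; the underlying idea is identical.
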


\begin{proof}
Since $\mathbb{A}$ is a first or second or third type $(n_1,\cdots,n_r)$-UTB tensor, we have $a_{it\cdots t}=0$ for any $j=2,\cdots,r$, $i\in I_j$ and $t\le S_{j-1}$. Now $\mathbb{A}=P\mathbb{I}_m$, so by (\ref{eq51}) we have $p_{it}=a_{it\cdots t}=0$ for any $j=2,\cdots,r$, $i\in I_j$ and $t\le S_{j-1}$, so by definition $P$ is an $(n_1,\cdots,n_r)$-UTB matrix.
\end{proof}

%\textcolor[rgb]{1.00,0.00,0.00}{下面的Theorem 5.1也都已经重写重证。}

\begin{The}\label{The51}   Let $k\ge 2$, $\mathbb{A}$ be an order $m$ dimension $n$,  and  the first or second or third type $(n_1,\cdots,n_r)$-UTB tensor with diagonal blocks
$\mathbb {A}_1,\cdots, \mathbb{A}_r$. If $\mathbb{A}$ has a left $k$-inverse, then its (unique) left $k$-inverse is an $(n_1,\cdots,n_r)$-UTB tensor of all the three types, with the diagonal blocks $\mathbb {B}_1,\cdots, \mathbb{B}_r$, where $\mathbb{B}_i$ is the (unique) left $k$-inverse of
$\mathbb{A}_i \ (i=1,\cdots,r)$.\end{The}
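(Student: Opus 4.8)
The plan is to reduce everything to a statement about matrices and then transport it back to tensors through the product formula. By Lemma \ref{Lem51}, the mere existence of a left $k$-inverse forces $\mathbb{A}=P\mathbb{I}_m$ for the nonsingular matrix $P=M(\mathbb{A})$, and the unique left $k$-inverse is $\mathbb{B}=\mathbb{I}_kP^{-1}$. Since $\mathbb{A}$ is a first, second, or third type $(n_1,\cdots,n_r)$-UTB tensor, Lemma \ref{Lem52} tells me that $P$ is an $(n_1,\cdots,n_r)$-UTB matrix, with diagonal blocks $P_1,\cdots,P_r$ where $P_i=P[I_i]=M(\mathbb{A}_i)$. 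Now I invoke the classical matrix fact: the inverse of a nonsingular block upper triangular matrix is again block upper triangular of the same shape, with diagonal blocks $P_1^{-1},\cdots,P_r^{-1}$ (and in particular each $P_i$ is nonsingular because $\det P=\prod_i\det P_i$). Thus $P^{-1}$ is an $(n_1,\cdots,n_r)$-UTB matrix with diagonal blocks $P_i^{-1}$.

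Next I push this back up to $\mathbb{B}=\mathbb{I}_kP^{-1}$. Expanding the product (Definition \ref{Def13}) and using that $(\mathbb{I}_k)_{ii_2\cdots i_k}$ is nonzero only when $i=i_2=\cdots=i_k$, I get the clean formula $b_{i\alpha_1\cdots\alpha_{k-1}}=\prod_{t=1}^{k-1}(P^{-1})_{i\alpha_t}$. From this it is immediate that $\mathbb{B}$ satisfies the strongest (first type) triangular condition: if $i\in I_j$ with $j\ge 2$ and $\min\{\alpha_1,\cdots,\alpha_{k-1}\}\le S_{j-1}$, then some $\alpha_t\le S_{j-1}$, whence $(P^{-1})_{i\alpha_t}=0$ because $P^{-1}$ is a UTB matrix, so the whole product vanishes. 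Hence $\mathbb{B}$ is a first type $(n_1,\cdots,n_r)$-UTB tensor, and by Remark \ref{Rem24}(1) it is therefore a UTB tensor of all three types simultaneously. Note that this argument is uniform: no matter which of the three types $\mathbb{A}$ belongs to, Lemma \ref{Lem52} delivers the same matrix UTB property and the conclusion is always the strongest first type condition.

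Finally I identify the diagonal blocks. Restricting the entry formula to indices in $I_i$ gives $\mathbb{B}[I_i]$ with $(\mathbb{B}[I_i])$-entries equal to $\prod_t(P_i^{-1})_{s\alpha_t}$, which is exactly $\mathbb{I}_kP_i^{-1}$ after the index shift; since $\mathbb{A}_i=\mathbb{A}[I_i]$ is row diagonal (a principal subtensor of the row-diagonal $\mathbb{A}$) with majorization matrix $P_i=M(\mathbb{A}_i)$, Lemma \ref{Lem51} identifies $\mathbb{I}_kP_i^{-1}$ as precisely the unique left $k$-inverse $\mathbb{B}_i$ of $\mathbb{A}_i$. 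I expect the only genuinely delicate point to be this bookkeeping step: confirming that the tensor restriction $\mathbb{B}[I_i]$ corresponds exactly to the matrix restriction $(P^{-1})[I_i]=P_i^{-1}$ under the index shift, and that $M(\mathbb{A})[I_i]$ coincides with $M(\mathbb{A}_i)$. Everything else is a direct application of Lemmas \ref{Lem51} and \ref{Lem52} together with the standard inversion of triangular block matrices.
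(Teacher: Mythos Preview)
Your proof is correct and shares the same opening moves as the paper (Lemma~\ref{Lem51} gives $\mathbb{A}=P\mathbb{I}_m$ and $\mathbb{B}=\mathbb{I}_kP^{-1}$; Lemma~\ref{Lem52} makes $P$ an $(n_1,\cdots,n_r)$-UTB matrix; the matrix inverse stays UTB). Where you diverge is in the second half. The paper never computes entries of $\mathbb{B}$: it simply observes that $\mathbb{I}_k$ and $P^{-1}$ are both $(n_1,\cdots,n_r)$-UTB tensors of all three types and invokes the product-closure result Theorem~\ref{The41} to conclude the same for $\mathbb{B}$; it then applies Theorem~\ref{The41} once more to $\mathbb{B}\mathbb{A}=\mathbb{I}$ to read off that each diagonal block satisfies $\mathbb{B}_i\mathbb{A}_i=\mathbb{I}$. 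You instead expand $b_{i\alpha_1\cdots\alpha_{k-1}}=\prod_t(P^{-1})_{i\alpha_t}$ explicitly, check the first-type UTB condition by hand, and identify the diagonal blocks through the matrix identity $(P^{-1})[I_i]=P_i^{-1}$ together with Lemma~\ref{Lem51} applied to each $\mathbb{A}_i=P_i\mathbb{I}_m$. Your route is more elementary in that it avoids the machinery of \S4 entirely, at the cost of the index bookkeeping you flagged; the paper's route is shorter and more structural but depends on Theorems~\ref{The41}--\ref{The42}. Both arrive at the same conclusion, and both work uniformly across the three UTB types.
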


\begin{proof} By Lemma \ref{Lem51} we see that $\mathbb{A}=P\mathbb{I}_m$ for some nonsingular matrix $P$, since $\mathbb{A}$ has a left $k$-inverse. By Lemma \ref{Lem52} we also know that $P$ is an $(n_1,\cdots,n_r)$-UTB matrix. Thus $P^{-1}$ is also an $(n_1,\cdots,n_r)$-UTB matrix. By Lemma \ref{Lem51} again we further know that its (unique) left $k$-inverse is $\mathbb{B}=\mathbb{I}_kP^{-1}$. Now both $\mathbb{I}_k$ and $P^{-1}$ are  $(n_1,\cdots,n_r)$-UTB tensors of all the three types, so by Theorem \ref{The41},
the product $\mathbb{B}=\mathbb{I}_kP^{-1}$ is  an $(n_1,\cdots,n_r)$-UTB tensor of all the three types.

Now $\mathbb{B}$ is the left $k$-inverse of $\mathbb{A}$. So by using Theorem \ref{The41} again,
$\mathbb{I}_{(k-1)(m-1)+1}=\mathbb{B}\mathbb{A}$ is an $(n_1,\cdots,n_r)$-UTB tensor, with the diagonal blocks
$\mathbb {B}_1\mathbb {A}_1,\cdots, \mathbb{B}_r\mathbb {A}_r$. But these blocks must all be identity tensors (as some diagonal blocks of $\mathbb{I}_{(k-1)(m-1)+1}$), so each $\mathbb{B}_i$ is the (unique) left $k$-inverse of $\mathbb{A}_i \ (i=1,\cdots,r)$.
\end{proof}

\subsection{The right inverses of the triangular blocked tensors}

\hskip.6cm Now we discuss the right inverses of the three types of UTB tensors.  The following result was obtained by Liu and Li in \cite{2016LL}.

\noindent \begin{Lem}\label{Lem53}{\rm(\cite{2016LL}, Theorem 3.4)}
Let $k\ge 2$ and $\mathbb{A}$ be an order $m$ dimension $n$ tensor. Then the following three conditions are equivalent:

\noindent {\rm(1)} There exists an nonsingular matrix $Q$ of order $n$, such that $\mathbb{A}=\mathbb{I}_mQ$.

\noindent {\rm(2)} $\mathbb{A}$ has a right 2-inverse.

\noindent {\rm(3)} $\mathbb{A}$ has a right $k$-inverse.\end{Lem}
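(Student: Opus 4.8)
\noindent The plan is to prove the cycle (1)$\Rightarrow$(3)$\Rightarrow$(2)$\Rightarrow$(1); only the last arrow needs a real idea, the other two being formal consequences of the product definition (\ref{eq13}). For (1)$\Rightarrow$(3) I would assume $\mathbb{A}=\mathbb{I}_mQ$ with $Q$ nonsingular and exhibit the explicit right $k$-inverse $\mathbb{D}=Q^{-1}\mathbb{I}_k$, a tensor of order $k$. A direct computation from (\ref{eq13}) shows $\mathbb{A}\mathbb{D}=\mathbb{I}_{(m-1)(k-1)+1}$: since $\mathbb{D}=Q^{-1}\mathbb{I}_k$ is row diagonal, the factors $d_{i_j\alpha_{j-1}}$ force each $\alpha_j$ to be a constant tuple $(t_j,\dots,t_j)$, and the contraction of $Q$ against $Q^{-1}$ then reproduces exactly the Kronecker pattern of the unit tensor (equivalently $(\mathbb{I}_mQ)(Q^{-1}\mathbb{I}_k)=\mathbb{I}_m\mathbb{I}_k=\mathbb{I}_{(m-1)(k-1)+1}$ by the associativity of the product, \cite{2013S}). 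Taking $k=2$, where $\mathbb{I}_2$ is the identity matrix and $\mathbb{D}=Q^{-1}$, simultaneously gives (1)$\Rightarrow$(2).

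For (3)$\Rightarrow$(2), let $\mathbb{B}$ be a right $k$-inverse, so $\mathbb{A}\mathbb{B}=\mathbb{I}_{(m-1)(k-1)+1}$, and let $Q=M(\mathbb{B})$ be its majorization matrix (Definition \ref{Def53}). I would restrict every multi-index $\alpha_j\in[n]^{k-1}$ in (\ref{eq13}) to a constant tuple $(t_j,\dots,t_j)$ with $t_j\in[n]$; each factor $b_{i_j\alpha_{j-1}}$ then becomes $(M(\mathbb{B}))_{i_jt_{j-1}}=q_{i_jt_{j-1}}$, so the left-hand side collapses to $(\mathbb{A}Q)_{it_1\cdots t_{m-1}}$, while the corresponding entry of the unit tensor equals $1$ precisely when $t_1=\cdots=t_{m-1}=i$. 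This is exactly the identity $\mathbb{A}Q=\mathbb{I}_m$, so the matrix $Q$ is a right $2$-inverse of $\mathbb{A}$.

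The heart of the proof is (2)$\Rightarrow$(1). Let $B$ be a right $2$-inverse, so that $\sum_{i_2,\dots,i_m}a_{ii_2\cdots i_m}b_{i_2\alpha_1}\cdots b_{i_m\alpha_{m-1}}=\delta_{i\alpha_1\cdots\alpha_{m-1}}$ for all $i,\alpha_1,\dots,\alpha_{m-1}\in[n]$. The first step is to show that $B$ is nonsingular, which is not part of the hypothesis. Suppose $Bc=0$ for some $c\in\mathbb{C}^n$; multiplying the identity by $c_{\alpha_1}\cdots c_{\alpha_{m-1}}$ and summing over all $\alpha_1,\dots,\alpha_{m-1}$ turns the left-hand side into $\sum_{i_2,\dots,i_m}a_{ii_2\cdots i_m}(Bc)_{i_2}\cdots(Bc)_{i_m}=0$, whereas the right-hand side becomes $c_i^{m-1}$; hence $c_i=0$ for every $i$ and $B$ is nonsingular. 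Once $B$ is invertible, the elementary associativity of right multiplication by matrices (checked directly from (\ref{eq13})) gives $\mathbb{A}=\mathbb{A}(BB^{-1})=(\mathbb{A}B)B^{-1}=\mathbb{I}_mB^{-1}$, so (1) holds with the nonsingular matrix $Q=B^{-1}$, closing the cycle.

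The step I expect to be the main obstacle is the nonsingularity of the right $2$-inverse $B$. The difficulty is that one cannot simply pass to the induced polynomial map $x\mapsto\mathbb{A}x$, because the product $\mathbb{A}B$ remembers the full, possibly non-symmetric tensor $\mathbb{A}$ and not merely its map; the contraction of the defining identity against the rank-one pattern $c_{\alpha_1}\cdots c_{\alpha_{m-1}}$ above is exactly what stays faithful to the tensor and forces $c=0$. After that point the argument is purely formal, reducing to the matrix identity $(\mathbb{A}B)B^{-1}=\mathbb{A}$ and to bookkeeping with constant multi-indices in (\ref{eq13}).
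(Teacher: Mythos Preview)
Your proof is correct but takes a different route from the paper's. The paper does not prove the full cycle directly: it cites \cite{2016LL} for (2)$\Leftrightarrow$(3) and then proves (1)$\Leftrightarrow$(3) by exploiting the left/right duality together with Lemma~\ref{Lem51}. Namely, if $\mathbb{B}$ is a right $k$-inverse of $\mathbb{A}$ then $\mathbb{A}$ is a left $m$-inverse of $\mathbb{B}$; Lemma~\ref{Lem51} then forces $\mathbb{B}=P\mathbb{I}_k$ with $P$ nonsingular and identifies $\mathbb{A}=\mathbb{I}_mP^{-1}$ as the unique left $m$-inverse, giving (1) in one stroke. Your argument, by contrast, is self-contained: the step (3)$\Rightarrow$(2) via the majorization matrix $M(\mathbb{B})$ and restriction to constant multi-indices is a nice explicit construction that the paper does not carry out, and your (2)$\Rightarrow$(1) proves from scratch that any right $2$-inverse $B$ is nonsingular via the contraction $c\mapsto c^{\otimes(m-1)}$ rather than invoking the left-inverse classification. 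The paper's approach is shorter once Lemma~\ref{Lem51} is available; yours avoids that dependency and in particular gives an independent proof of (2)$\Leftrightarrow$(3) rather than deferring it to \cite{2016LL}.
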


 \begin{proof}  (2)$\Longleftrightarrow$(3) was already proved in \cite{2016LL} (Theorem 3.4).
For the convenience of the readers, here we use the results on left inverses to give a direct proof of (1)$\Longleftrightarrow$(3).

\noindent (1)$\Longrightarrow$(3): Obviously $Q^{-1}\mathbb{I}_k$ is a right $k$-inverse of $\mathbb{A}=\mathbb{I}_mQ$.

\noindent (3)$\Longrightarrow$(1): Let $\mathbb{B}$ be a right $k$-inverse of $\mathbb{A}$. Then $\mathbb{A}$ is the (unique) left $m$-inverse of
$\mathbb{B}$. So by Lemma \ref{Lem51} we have $\mathbb{B}=P\mathbb{I}_k$, and so $\mathbb{A}=\mathbb{I}_mP^{-1}$ for some nonsingular matrix $P$.
\end{proof}

The following Lemma \ref{Lem54}  will be used in the proof of Theorem \ref{The52}.

\begin{Lem}\label{Lem54} Let $Q$ be a matrix of order $n$, $\mathbb{A}=\mathbb{I}_mQ$ be an order $m$ dimension $n$,  and a first or second or third type $(n_1,\cdots,n_r)$-UTB tensor. Then $Q$ is an $(n_1,\cdots,n_r)$-UTB matrix. \end{Lem}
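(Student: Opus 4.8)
The plan is to mimic the proof of Lemma \ref{Lem52}, working only with the ``diagonal-type'' entries $a_{it\cdots t}$ (those whose last $m-1$ subscripts are all equal to a common value $t$), since these are the only entries needed to detect the upper triangular blocked structure of the underlying matrix $Q$.

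First I would compute these entries of $\mathbb{A}=\mathbb{I}_m Q$. By the definition of the tensor product (Definition \ref{Def13}) together with the fact that the unit tensor $\mathbb{I}_m$ has entries $\delta_{i_1i_2\cdots i_m}$ (equal to $1$ if $i_1=\cdots=i_m$ and $0$ otherwise), the Kronecker delta collapses the summation to the single term $j_2=\cdots=j_m=i$, giving
\begin{equation*}
a_{ii_2\cdots i_m}=\sum_{j_2,\cdots,j_m=1}^n \delta_{ij_2\cdots j_m}q_{j_2i_2}\cdots q_{j_mi_m}=q_{ii_2}q_{ii_3}\cdots q_{ii_m}.
\end{equation*}
In particular, specializing to $i_2=\cdots=i_m=t$ yields $a_{it\cdots t}=q_{it}^{\,m-1}$. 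Note that this differs from the situation of Lemma \ref{Lem52}, where one obtains the linear relation $a_{it\cdots t}=p_{it}$; here one instead gets an $(m-1)$-th power.

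Next I would observe that, on precisely these diagonal-type entries, the defining conditions of all three types of UTB tensors coincide. Indeed, when $i_2=\cdots=i_m=t$ we have $\min\{i_2,\cdots,i_m\}=\max\{i_2,\cdots,i_m\}=t$, so the condition (\ref{eq21}) for the first type, the condition (\ref{eq24}) for the second type (recalling $S_{j-1}\le S_j$), and the condition (\ref{eq27}) for the third type all reduce to the same statement: $a_{it\cdots t}=0$ whenever $i\in I_j$ (for some $j=2,\cdots,r$) and $t\le S_{j-1}$. Since $\mathbb{A}$ is a UTB tensor of one of the three types, this holds.

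Finally, combining the two steps gives $q_{it}^{\,m-1}=a_{it\cdots t}=0$ for all $j=2,\cdots,r$, all $i\in I_j$, and all $t\le S_{j-1}$; over $\mathbb{C}$ this forces $q_{it}=0$ for exactly those index pairs, which is the definition of $Q$ being an $(n_1,\cdots,n_r)$-UTB matrix. There is no serious obstacle here: the only point requiring care is the verification that the three distinct defining conditions genuinely collapse to a single condition on the entries $a_{it\cdots t}$, which is what makes the uniform statement ``first or second or third type'' legitimate.
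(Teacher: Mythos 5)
Your proposal is correct and follows essentially the same route as the paper: compute $a_{it\cdots t}=q_{it}^{m-1}$ from $\mathbb{A}=\mathbb{I}_mQ$, note that all three UTB conditions force $a_{it\cdots t}=0$ for $i\in I_j$, $t\le S_{j-1}$, and conclude $q_{it}=0$. Your explicit check that the three defining conditions collapse to the same statement on diagonal-type entries is a point the paper leaves implicit, but the argument is the same.
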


\begin{proof} Since $\mathbb{A}$ is a first or second or third type $(n_1,\cdots,n_r)$-UTB tensor, we have $a_{it\cdots t}=0$ for any $j=2,\cdots,r$, $i\in I_j$ and $t\le S_{j-1}$.

On the other hand, by the condition $\mathbb{A}=\mathbb{I}_mQ$ and the definition of the tensor product we also have:
$$a_{it\cdots t}=\sum_{j_2,\cdots ,j_m=1}^n \delta_{ij_2\cdots j_m}q_{j_2t}\cdots q_{j_mt} =q_{it}^{m-1}.$$
\noindent Thus we have $q_{it}^{m-1}=a_{it\cdots t}=0$ for all $j=2,\cdots,r$, $i\in I_j$ and $t\le S_{j-1}$. So by definition $Q$ is an $(n_1,\cdots,n_r)$-UTB matrix.\end{proof}

The following theorem and its proof hold for all three types of UTB tensors (since Lemma \ref{Lem54} holds for all three types of UTB tensors).

\begin{The}\label{The52}    Let $k\ge 2$, and $\mathbb{A}$ be an order $m$ dimension $n$, the first or second or third type $(n_1,\cdots,n_r)$-UTB tensor with diagonal blocks
$\mathbb {A}_1,\cdots, \mathbb{A}_r$, and $\mathbb{A}$ has a right $k$-inverse. Then any right $k$-inverse $\mathbb{B}$ of $\mathbb{A}$ is an $(n_1,\cdots,n_r)$-UTB tensor of all the three types, with the diagonal blocks $\mathbb {B}_1,\cdots, \mathbb{B}_r$,
where $\mathbb{B}_i$ is a right $k$-inverse of $\mathbb{A}_i \ (i=1,\cdots,r)$.
\end{The}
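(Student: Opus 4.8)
The plan is to mirror closely the proof of Theorem \ref{The51}, replacing the characterization of left inverses by that of right inverses and then transferring the triangular structure through a matrix factorization. Fix an arbitrary right $k$-inverse $\mathbb{B}$ of $\mathbb{A}$, so that $\mathbb{A}\mathbb{B}=\mathbb{I}$. By Definition \ref{Def51} this says precisely that $\mathbb{A}$ is a left $m$-inverse of $\mathbb{B}$; in particular $\mathbb{B}$ possesses a left inverse. Applying Lemma \ref{Lem51} to $\mathbb{B}$ (with the roles of the two orders interchanged, so that the order of $\mathbb{B}$ plays the role of $m$ and the order of $\mathbb{A}$ plays the role of the left-inverse order), I obtain a nonsingular matrix $P$ of order $n$ with $\mathbb{B}=P\mathbb{I}_k$, and, since the left $m$-inverse of $\mathbb{B}$ is unique, $\mathbb{A}=\mathbb{I}_mP^{-1}$. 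This is exactly the factorization already extracted in the proof of Lemma \ref{Lem53}.

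Next I would read off the triangular structure of the matrix factor. Since $\mathbb{A}=\mathbb{I}_mP^{-1}$ is a UTB tensor of the prescribed type, Lemma \ref{Lem54} (applied with $Q=P^{-1}$) shows that $P^{-1}$ is an $(n_1,\cdots,n_r)$-UTB matrix; consequently its inverse $P$ is again an $(n_1,\cdots,n_r)$-UTB matrix, using the standard fact that the inverse of a nonsingular upper triangular blocked matrix has the same block-triangular shape. Viewed as order-$2$ tensors, both $P$ and the unit tensor $\mathbb{I}_k$ are $(n_1,\cdots,n_r)$-UTB tensors of all three types (for a matrix the three notions coincide by Remark \ref{Rem24}, and $\mathbb{I}_k$ is even diagonal blocked). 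Hence their product $\mathbb{B}=P\mathbb{I}_k$ is, by Theorems \ref{The41} and \ref{The42}, an $(n_1,\cdots,n_r)$-UTB tensor of all three types.

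It then remains to identify the diagonal blocks of $\mathbb{B}$. Writing $\mathbb{B}_i=\mathbb{B}[I_i]$ for the $i$-th diagonal block, I would apply the product theorem a second time, now to $\mathbb{A}\mathbb{B}=\mathbb{I}$: since $\mathbb{A}$ and $\mathbb{B}$ are both UTB tensors of the given type with the same partition, Theorem \ref{The41} (for the first or third type) or Theorem \ref{The42} (for the second type) guarantees that the $i$-th diagonal block of $\mathbb{A}\mathbb{B}$ equals $\mathbb{A}_i\mathbb{B}_i$. But $\mathbb{A}\mathbb{B}=\mathbb{I}$, and the $i$-th diagonal block of the unit tensor is itself a unit tensor of dimension $n_i$; therefore $\mathbb{A}_i\mathbb{B}_i=\mathbb{I}$, which is exactly the statement that $\mathbb{B}_i$ is a right $k$-inverse of $\mathbb{A}_i$.

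The one point that needs care—and the only genuine obstacle—is the bookkeeping of types. Unlike the left-inverse case, a right $k$-inverse need not be unique, so the whole argument must be carried out for an arbitrary $\mathbb{B}$; this is harmless because each $\mathbb{B}$ furnishes its own factorization $\mathbb{B}=P\mathbb{I}_k$. I must also make sure that when the product theorems are invoked I correctly match $\mathbb{A}$ (which is UTB of a single prescribed type) with $\mathbb{B}$ (which has just been shown to be UTB of all three types), so that $\mathbb{A}\mathbb{B}$ is legitimately of that prescribed type and the diagonal-block formula of Theorem \ref{The41} or \ref{The42} genuinely applies.
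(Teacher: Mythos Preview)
Your proposal is correct and follows essentially the same approach as the paper's proof: factor $\mathbb{B}=P\mathbb{I}_k$ via Lemma \ref{Lem51}, use Lemma \ref{Lem54} to get the block-triangular structure of $P^{-1}$ (hence of $P$), deduce that $\mathbb{B}$ is UTB of all three types by the product theorems, and then read off the diagonal blocks from $\mathbb{A}\mathbb{B}=\mathbb{I}$. Your explicit attention to matching types when invoking Theorems \ref{The41} and \ref{The42} is, if anything, slightly more careful than the paper, which cites only Theorem \ref{The41} in the final step.
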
\begin{proof}
Since $\mathbb{B}$ is a right $k$-inverse $\mathbb{A}$,  $\mathbb{B}$ has a left $m$-inverse $\mathbb{A}$. So by Lemma \ref{Lem51} we know that
$\mathbb{B}=P\mathbb{I}_k$ for some nonsingular matrix $P$, and in this case we have $\mathbb{A}=\mathbb{I}_mP^{-1}$.

By hypothesis $\mathbb{A}$ is a first or second or third type $(n_1,\cdots,n_r)$-UTB tensor, so by Lemma \ref{Lem54} we know that $P^{-1}$ (and thus $P$) is an $(n_1,\cdots,n_r)$-UTB matrix.

Now both $P$ and $\mathbb{I}_k$ are the $(n_1,\cdots,n_r)$-UTB tensors of all the three types, so by Theorem \ref{The41} we know that their product $\mathbb{B}$ is also an $(n_1,\cdots,n_r)$-UTB tensor of all the three types.

Also $\mathbb{B}$ is a right $k$-inverse of $\mathbb{A}$ with the diagonal blocks $\mathbb {B}_1,\cdots, \mathbb{B}_r$.
So by using Theorem \ref{The41} again,
$\mathbb{I}_{(k-1)(m-1)+1}=\mathbb{A}\mathbb{B}$ is an $(n_1,\cdots,n_r)$-UTB tensor, with the diagonal blocks
$\mathbb {A}_1\mathbb {B}_1,\cdots, \mathbb {A}_r\mathbb{B}_r$. But these blocks must all be identity tensors (as some diagonal blocks of $\mathbb{I}_{(k-1)(m-1)+1}$), so each $\mathbb{B}_i$ is a right $k$-inverse of $\mathbb{A}_i \ (i=1,\cdots,r)$.
\end{proof}

\subsection{The left $k$-inverses of the normal $(n_1,\cdots,n_r)$-UTB nonsingular $M$-tensors}

\hskip.6cm In \cite{2013DQW}, Ding, Qi and Wei generalized the concept of $M$-matrices to $M$-tensors.

A real tensor $\mathbb{A}$ is called a $Z$-tensor, if all of its off-diagonal entries are non-positive.
Equivalently, $\mathbb{A}$ is a $Z$-tensor if and only if $\mathbb{A}$ can be expressed as $\mathbb{A}=s\mathbb{I}-\mathbb{B}$,
where $\mathbb{B}$ is a nonnegative tensor, and $\mathbb{I}$ is the unit tensor.

A $Z$-tensor $\mathbb{A}=s\mathbb{I}-\mathbb{B}$ is called an $M$-tensor, if $s\ge\rho(\mathbb{B})$; And is called an nonsingular $M$-tensor,
if $s>\rho(\mathbb{B})$.

It is well-known in matrix theory (\cite{1994}) that if $A$ is an irreducible nonsingular $M$-matrix, then $A^{-1}$ is a positive matrix.

An $(n_1,\cdots,n_r)$-UTB tensor $\mathbb{A}$ of the first or second type is called the first or second type normal,
if all its diagonal blocks $\mathbb{A}_1,\cdots, \mathbb{A}_r$ are weakly irreducible. An $(n_1,\cdots,n_r)$-UTB tensor $\mathbb{A}$ of
the third type is called the third type normal, if all its diagonal blocks $\mathbb{A}_1,\cdots, \mathbb{A}_r$ are irreducible
(also see Definitions \ref{Def61} and \ref{Def62} in \S 6).

In this subsection, we will mainly prove the following Theorem 5.4.

%\begin{The}\label{The53}
\vskip.1cm
{\noindent {\bf Theorem 5.4} \it
If $\mathbb{A}$ is an order $m$ dimension $n$ and the first or second or third type normal $(n_1,\cdots,n_r)$-UTB
nonsingular $M$-tensor, and $\mathbb{A}$ has a left $k$-inverse $\mathbb{B}$. Then $\mathbb{B}$ is an  $(n_1,\cdots,n_r)$-UTB tensor of all the three types all of whose diagonal blocks $\mathbb{B}_1,\cdots, \mathbb{B}_r$ are positive tensors.}%\end{The}

In order to prove Theorem 5.4, we first need to prove some preliminary results (The following Lemma \ref{Lem55} and Theorem \ref{The53}).

\begin{Lem}\label{Lem55}   If an order $m$ dimension $n$ row diagonal tensor $\mathbb{A}=P\mathbb{I}_m$, where $P$ is a matrix of order $n$.
Then we have:

\vskip 0.1cm

\noindent {\rm(1)} $\det(\lambda\mathbb{I}_m-\mathbb{A})=(\det(\lambda\mathbb{I}_2-P))^{(m-1)^{n-1}}$.

\vskip 0.1cm

\noindent {\rm(2)} $\mathbb{A}$ is a $Z$-tensor if and only if $P$ is a $Z$-matrix.

\vskip 0.1cm

\noindent {\rm(3)} $\mathbb{A}$ is weakly irreducible if and only if $P$ is irreducible.

\vskip 0.1cm

\noindent {\rm(4)} $\mathbb{A}$ is an $M$-tensor if and only if $P$ is an $M$-matrix.

\vskip 0.1cm

\noindent {\rm(5)} $\mathbb{A}$ is a nonsingular $M$-tensor if and only if $P$ is a nonsingular $M$-matrix.
\end{Lem}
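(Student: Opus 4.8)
The plan is to exploit the single structural fact that a row diagonal tensor is completely governed by its majorization matrix. First I would record the identity $\lambda\mathbb{I}_m-\mathbb{A}=(\lambda I-P)\mathbb{I}_m$, which holds because both $\lambda\mathbb{I}_m$ and $\mathbb{A}=P\mathbb{I}_m$ are row diagonal with majorization matrices $\lambda I$ and $P$ respectively, so their difference is row diagonal with majorization matrix $\lambda I-P$; here $\lambda\mathbb{I}_2-P$ in the statement is literally the matrix $\lambda I-P$. The technical core is therefore a single sub-lemma: for any matrix $Q$ of order $n$, $\det(Q\mathbb{I}_m)=(\det Q)^{(m-1)^{n-1}}$, which I would prove exactly as in the proof of Theorem \ref{The31}. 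The action of $Q\mathbb{I}_m$ on $x$ is $(Q\mathbb{I}_m x)_i=\sum_j q_{ij}x_j^{m-1}$, so the system $(Q\mathbb{I}_m)x=0$ has a nonzero solution if and only if $Qy=0$ has a nonzero solution, where $y_j=x_j^{m-1}$; since every complex number has an $(m-1)$-th root, this happens exactly when $\det Q=0$. Hence $\det(Q\mathbb{I}_m)$ and $\det Q$ have the same zero locus, and Hilbert's Nullstellensatz together with the irreducibility of $\det Q$ in the entries of $Q$ forces $\det(Q\mathbb{I}_m)=c\,(\det Q)^{s}$. Specializing $Q$ to $\mathrm{diag}(d_1,\dots,d_n)$, for which $Q\mathbb{I}_m$ is the diagonal tensor with determinant $\prod_i d_i^{(m-1)^{n-1}}$, pins down $c=1$ and $s=(m-1)^{n-1}$. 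Part (1) is then immediate from the sub-lemma with $Q=\lambda I-P$.

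Parts (2) and (3) need no determinant theory and follow directly from the entrywise description $a_{ij\cdots j}=p_{ij}$ with all other entries of $\mathbb{A}$ equal to zero. For (2), the off-diagonal entries of $\mathbb{A}$ are either one of the $p_{ij}$ with $j\ne i$ or else a forced zero, so $\mathbb{A}$ has all off-diagonal entries nonpositive if and only if $p_{ij}\le 0$ for all $j\ne i$, i.e. $P$ is a $Z$-matrix. For (3) I would prove the stronger statement that $\mathbb{A}$ is $I$-weakly reducible if and only if $P$ is $I$-reducible for the very same $I$: a nonzero entry $a_{i_1i_2\cdots i_m}$ with $i_1\in I$ and some index $\notin I$ must, by row diagonality, have $i_2=\cdots=i_m=j$ with $j\notin I$ and equals $p_{i_1 j}$, so the weak-reducibility condition \eqref{eq12} on $\mathbb{A}$ is exactly the matrix reducibility condition $p_{ij}=0\ (i\in I,\,j\notin I)$. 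Taking the contrapositive over all proper subsets $I$ gives the equivalence of weak irreducibility of $\mathbb{A}$ and irreducibility of $P$.

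For (4) and (5) I would combine (1) and (2). Fix $s$ larger than every diagonal entry and set $\mathbb{B}=s\mathbb{I}_m-\mathbb{A}$ and $B_P=sI-P$; then $\mathbb{B}$ is row diagonal with majorization matrix $B_P$, so $\mathbb{B}=B_P\mathbb{I}_m$, and $\mathbb{B}\ge 0$ if and only if $B_P\ge 0$. The crucial input is that the tensor and matrix spectral radii coincide: by Part (1) the characteristic polynomial of $\mathbb{B}$ is $(\det(\lambda I-B_P))^{(m-1)^{n-1}}$, so the spectrum of the tensor $\mathbb{B}$ is exactly the spectrum of the matrix $B_P$ (each eigenvalue repeated $(m-1)^{n-1}$ times), whence $\rho(\mathbb{B})=\rho(B_P)$. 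Since $P=sI-B_P$ and $\mathbb{A}=s\mathbb{I}_m-\mathbb{B}$ with $B_P,\mathbb{B}\ge 0$, the $M$-tensor condition $s\ge\rho(\mathbb{B})$ is identical to the $M$-matrix condition $s\ge\rho(B_P)$, which gives (4); replacing $\ge$ by $>$ gives the nonsingular case (5). The quantity $s-\rho$ is independent of the chosen $s$, so the conclusion does not depend on the splitting.

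The step I expect to require the most care is the sub-lemma underlying Part (1), specifically nailing the exact exponent $(m-1)^{n-1}$ and the constant $1$: the zero-locus argument only yields that $\det(Q\mathbb{I}_m)$ is a power of $\det Q$ up to a constant, and the exponent must be fixed by an independent computation, namely the diagonal specialization above (equivalently, a degree count using $\deg\det=n(m-1)^{n-1}$). Everything else is bookkeeping from the majorization-matrix dictionary once this formula is in hand.
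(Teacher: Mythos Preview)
Your proposal is correct, and the overall architecture---factor $\lambda\mathbb{I}_m-\mathbb{A}=(\lambda I-P)\mathbb{I}_m$ and then translate each property through the majorization matrix---matches the paper's. The details differ in three places. For (1), the paper does not re-prove your sub-lemma but simply quotes the product-determinant formula $\det(Q\mathbb{B})=(\det Q)^{(m-1)^{n-1}}\det\mathbb{B}$ from \cite{2013SSZ} with $\mathbb{B}=\mathbb{I}_m$; your Nullstellensatz/diagonal-specialization argument is a correct self-contained re-derivation of that special case. For (3), the paper routes through the representation matrix $G(|\mathbb{A}|)$ of \cite{2011YY}, computes $G(|\mathbb{A}|)=|M(\mathbb{A})|=|P|$, and invokes the known equivalence ``$\mathbb{A}$ weakly irreducible $\Leftrightarrow$ $G(|\mathbb{A}|)$ irreducible''; your direct argument that $I$-weak reducibility of $\mathbb{A}$ coincides with $I$-reducibility of $P$ is more elementary and avoids that machinery. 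For (4)--(5), the paper uses the characterization ``$Z$-tensor with all eigenvalue real parts $\ge 0$ (resp.\ $>0$)'' and reads this off from (1) and (2), whereas you stay with the splitting definition and match $\rho(\mathbb{B})=\rho(B_P)$ via the spectral identity from (1); both are valid, and your remark that the choice of $s$ is immaterial is a nice sanity check. In short, your version is more self-contained (no external citations needed for (1) and (3)), while the paper's is shorter because it leans on \cite{2013SSZ} and \cite{2011YY}.
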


\begin{proof} (1). We have
$$\lambda\mathbb{I}_m-\mathbb{A}=\lambda\mathbb{I}_2\mathbb{I}_m-P\mathbb{I}_m=(\lambda\mathbb{I}_2-P)\mathbb{I}_m.$$
Thus by the formula of the determinants of the product of tensors (\cite{2013SSZ}) we have
$$\det(\lambda\mathbb{I}_m-\mathbb{A})=(\det(\lambda\mathbb{I}_2-P))^{(m-1)^{n-1}}(\det\mathbb{I}_m)
=(\det(\lambda\mathbb{I}_2-P))^{(m-1)^{n-1}}.$$

\noindent (2). Since $\mathbb{A}$ is row diagonal, we have $a_{ii_2\cdots i_m}=0$ if $i_2,\cdots ,i_m$ are not all equal. Thus we have:
$$\mathbb{A} \ \mbox {is a $Z$-tensor}\Longleftrightarrow a_{ij\cdots j}\le 0 \ (\forall i\ne j)
\Longleftrightarrow p_{ij}\le 0 \ (\forall i\ne j)\Longleftrightarrow P \  \mbox { is a $Z$-matrix}.$$

\vskip 0.1cm

\noindent (3). By Proposition \ref{Pro51} we know that $P=M(\mathbb{A})$ is the majorization matrix of $\mathbb{A}$.

\vskip 0.1cm

On the other hand, let $G(|\mathbb{A}|)$ be the ``representation matrix" of $|\mathbb{A}|$ defined as (\cite{2011YY}):
\begin{equation}\label{eq51}(G(|\mathbb{A}|))_{ij}=\sum_{\{i_2,\cdots ,i_m\}\ni j}|a_{ii_2\cdots i_m}|.\end{equation}% \eqno {(5.1)}$$
Then by the hypothesis that $\mathbb{A}$ is row diagonal we can verify that
\begin{equation}\label{eq51}(G(|\mathbb{A}|))_{ij}=\sum_{\{i_2,\cdots ,i_m\}\ni j}|a_{ii_2\cdots i_m}|=|a_{ij\cdots j}|
=(|M(\mathbb{A})|)_{ij}.\end{equation}% \eqno {(5.2)}$$
Thus we have $G(|\mathbb{A}|)=|M(\mathbb{A})|$. So by \cite{2011YY} (Definitions 2.1 and 2.5) we have
$$\mathbb{A} \ \mbox {is weakly irreducible}\Longleftrightarrow |\mathbb{A}| \ \mbox {is weakly irreducible}
\Longleftrightarrow G(|\mathbb{A}|) \ \mbox {is irreducible}$$

\hskip4.8cm $\Longleftrightarrow |M(\mathbb{A})|$ is irreducible $\Longleftrightarrow M(\mathbb{A})$ is irreducible

\hskip4.8cm $\Longleftrightarrow P$ is irreducible.

\vskip 0.1cm

\noindent (4). By (1) we have that
%$$\begin{aligned} &\mbox {The real parts of all eigenvalues of $\mathbb{A}$ are} \ \ge 0
%\Longleftrightarrow &\mbox{The real parts of all eigenvalues of $P$ are} \ \ge 0.
%\end{aligned}$$
$$\mbox {The real parts of all eigenvalues of $\mathbb{A}$ are} \ \ge 0
\Longleftrightarrow \mbox{The real parts of all eigenvalues of $P$ are} \ \ge 0.$$
\noindent So we have
$$\begin{aligned}
&\mathbb{A} \ \mbox {is an $M$-tensor}\\
\Longleftrightarrow &\mathbb{A} \ \mbox {is a $Z$-tensor and the real parts of all eigenvalues of $\mathbb{A}$ are} \ \ge 0\\
\Longleftrightarrow &P \ \mbox {is a $Z$-matrix and the real parts of all eigenvalues of $P$ are} \ \ge 0\\
\Longleftrightarrow &P \ \mbox {is an $M$-matrix}.
\end{aligned}$$

\vskip 0.1cm

\noindent (5). Replace all ``$\ge 0$" by ``$>0$", and ``$M$-tensor" by ``nonsingular $M$-tensor" in the proof of (4).
\end{proof}

From Lemma \ref{Lem55} we can obtain the following result which is a generalization of the corresponding result for irreducible nonsingular $M$-matrices.

\begin{The}\label{The53}   Let $\mathbb{A}$ be an order $m$ dimension $n$ weakly irreducible nonsingular $M$-tensor,
and $\mathbb{A}$ has a left $k$-inverse $\mathbb{B}$. Then $\mathbb{B}>0$.\end{The}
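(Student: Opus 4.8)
The plan is to reduce everything about $\mathbb{A}$ to its majorization matrix $P=M(\mathbb{A})$ and then invoke the classical fact about inverses of irreducible nonsingular $M$-matrices. First I would use the hypothesis that $\mathbb{A}$ has a left $k$-inverse: by Lemma \ref{Lem51} this is equivalent to $\mathbb{A}=P\mathbb{I}_m$ for some nonsingular matrix $P$, so $\mathbb{A}$ is row diagonal with $P=M(\mathbb{A})$, and moreover the (unique) left $k$-inverse is $\mathbb{B}=\mathbb{I}_kP^{-1}$. Thus the whole problem is transformed into showing that the tensor $\mathbb{I}_kP^{-1}$ is positive, which in turn will follow once I know that $P^{-1}$ is a positive matrix.

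Next I would transfer the two structural hypotheses on $\mathbb{A}$ to $P$ via Lemma \ref{Lem55}. Part (3) of that lemma says $\mathbb{A}$ is weakly irreducible if and only if $P$ is irreducible, and part (5) says $\mathbb{A}$ is a nonsingular $M$-tensor if and only if $P$ is a nonsingular $M$-matrix. Since $\mathbb{A}$ is by assumption a weakly irreducible nonsingular $M$-tensor, $P$ is therefore an irreducible nonsingular $M$-matrix. By the classical result of matrix theory cited above (\cite{1994}), the inverse of an irreducible nonsingular $M$-matrix is a positive matrix, so every entry of $P^{-1}$ satisfies $(P^{-1})_{ij}>0$.

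Finally I would evaluate the entries of $\mathbb{B}=\mathbb{I}_kP^{-1}$ directly from the definition of the tensor product (Definition \ref{Def13}, equation (\ref{eq13})). Because $\mathbb{I}_k$ is diagonal, only the term with $i_2=\cdots=i_k=i$ survives in the summation, giving
$$b_{i\alpha_1\cdots\alpha_{k-1}}=\prod_{j=1}^{k-1}(P^{-1})_{i\alpha_j}.$$
Each such entry is a product of $k-1$ strictly positive numbers and is therefore positive, which yields $\mathbb{B}>0$.

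The argument is short precisely because all the heavy lifting has already been placed in Lemma \ref{Lem51} (the explicit form $\mathbb{B}=\mathbb{I}_kP^{-1}$ of the left $k$-inverse) and Lemma \ref{Lem55} (the dictionary between properties of $\mathbb{A}$ and of $P$). The only genuinely computational step is the entrywise evaluation of the tensor--matrix product $\mathbb{I}_kP^{-1}$, and the only external input is the positivity of $(P^{-1})$ for irreducible nonsingular $M$-matrices; I do not anticipate a serious obstacle, the main point being simply to recognize that the positivity of the matrix $P^{-1}$ propagates, through the collapsing of the diagonal unit tensor, to the positivity of every entry of $\mathbb{B}$.
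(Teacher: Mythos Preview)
Your proposal is correct and follows essentially the same route as the paper: use Lemma \ref{Lem51} to write $\mathbb{A}=P\mathbb{I}_m$ and $\mathbb{B}=\mathbb{I}_kP^{-1}$, apply Lemma \ref{Lem55}(3),(5) to deduce that $P$ is an irreducible nonsingular $M$-matrix so that $P^{-1}>0$, and then compute the entries of $\mathbb{I}_kP^{-1}$ explicitly to see they are all positive. The paper's proof is line-for-line the same argument.
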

\begin{proof} Since $\mathbb{A}$ has a left $k$-inverse, we have that $\mathbb{A}=P\mathbb{I}_m$ (for some matrix $P$) is row diagonal by Lemma \ref{Lem51}.
Thus by (3) and (5) of Lemma \ref{Lem55}, $P$ is an irreducible nonsingular $M$-matrix. By the result of $M$-matrices (\cite{1994}),
we have $P^{-1}>0$.

On the other hand, we know that the left $k$-inverse $\mathbb{B}=\mathbb{I}_kP^{-1}$. Write $Q=P^{-1}$, then by the definition of the tensor product
we have
$$b_{ii_2\cdots i_k}=\sum_{j_2,\cdots ,j_k=1}^n \delta_{ij_2\cdots j_k}q_{j_2i_2}\cdots q_{j_ki_k} =q_{ii_2}\cdots q_{ii_k}>0.$$
Thus we have $\mathbb{B}=\mathbb{I}_kP^{-1}>0$.
\end{proof}

Now we are ready to prove Theorem 5.4.

{\bf\noindent Proof of Theorem 5.4:} Let the diagonal blocks of $\mathbb{A}$ be $\mathbb{A}_1,\cdots, \mathbb{A}_r$
(all of which are weakly irreducible by hypothesis).

Firstly, by Theorem \ref{The51} we know that $\mathbb{A}$ has a left $k$-inverse implying that each $\mathbb{A}_i$ also has a left $k$-inverse.

Secondly, we show that $\mathbb{A}$ is a nonsingular $M$-tensor implying that each $\mathbb{A}_i$ is also a nonsingular $M$-tensor:

Write $\mathbb{A}=s\mathbb{I}_m-\mathbb{C}$, where $\mathbb{C}\ge 0$
and $s>\rho (\mathbb{C})$. Then $\mathbb{C}=s\mathbb{I}_m-\mathbb{A}$ is an $(n_1,\cdots,n_r)$-UTB tensor with the diagonal blocks,
say $\mathbb{C}_1,\cdots, \mathbb{C}_r$. Thus we have
$$\mathbb{A}_i=s\mathbb{I}_m(n_i)-\mathbb{C}_i \qquad (i=1,\cdots,r),$$
where $\mathbb{I}_m(n_i)$ is the unit tensor of order $m$ and dimension $n_i$.

\vskip 0.1cm

Now $s>\rho(\mathbb{C})=\max_{1\le i\le r}\rho(\mathbb{C}_i)$ implying that each $\mathbb{A}_i$ is also a nonsingular $M$-tensor.

Thus we see that this weakly irreducible $\mathbb{A}_i$ satisfies all the conditions of Theorem \ref{The53}. So by Theorem \ref{The53}
we conclude that its left $k$-inverse $\mathbb{B}_i>0 \ (i=1,\cdots,r)$.

\section{The reducible and weakly reducible normal form of tensors}

\hskip.6cm In this section, we study the reducible and weakly reducible normal forms of tensors.
Since all the problems and results considered in this section depend only on the zero-nonzero pattern of the tensors,
we may assume without loss of generality that all the tensors considered in this section are nonnegative tensors.

First we recall some definitions and results in the matrix case.

An $(n_1,\cdots,n_r)$-UTB matrix $A$ is called normal (or canonical), if all of whose diagonal blocks are irreducible.
This concept can be generalized to tensors as follows.

\begin{Def}\label{Def61}  An $(n_1,\cdots,n_r)$-UTB (or 2ndUTB) tensor $\mathbb{A}$ is called a first type (or second type)
(weakly reducible) normal tensor, if all of whose diagonal blocks are weakly irreducible. \end{Def}

\begin{Def}\label{Def62} An $(n_1,\cdots,n_r)$-3rdUTB tensor $\mathbb{A}$ is called a third type (reducible) normal tensor,
if all of whose diagonal blocks are irreducible.\end{Def}

In the matrix case, we have the following two well-known results about the existence and uniqueness of the normal form of a matrix of order $n$.

\begin{The}\label{The61}{\rm(\cite{1991})} Every matrix $A$ of order $n$ is permutational similar to some normal upper (or lower)
triangular blocked matrix.\end{The}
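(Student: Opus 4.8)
The plan is to reduce the matrix to its classical Frobenius (irreducible) normal form by passing to the associated digraph and decomposing it into strongly connected components. Since, as noted at the start of \S 6, only the zero--nonzero pattern of $A$ matters, I would attach to $A$ the directed graph $D(A)$ on the vertex set $[n]$ having an arc $i\to i_2$ precisely when $a_{ii_2}\ne 0$ (with $i\ne i_2$). In the matrix case $m=2$, Definition \ref{Def11}(1) says that $A$ is irreducible exactly when $D(A)$ is strongly connected, so the irreducible diagonal blocks we want should correspond to the strongly connected components of $D(A)$ (a single-vertex component gives a $1\times1$ block, which is irreducible by convention).

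First I would compute the strongly connected components $C_1,\dots,C_r$ of $D(A)$ and form the condensation digraph $D^{*}$, whose vertices are the $C_a$ and which has an arc $C_a\to C_b$ (for $a\ne b$) whenever some arc of $D(A)$ runs from a vertex of $C_a$ to a vertex of $C_b$. The decisive structural fact is that $D^{*}$ is acyclic, hence admits a topological ordering of its vertices; after relabelling the components I may assume that every arc of $D^{*}$ goes from a lower-indexed component to a higher-indexed one. I then let $P$ be the permutation matrix of the permutation $\sigma$ that lists the vertices of $C_1$ first, then those of $C_2$, and so on, put $n_a=|C_a|$, and set $B=PAP^{T}$, so that the block index set $I_a=\{S_{a-1}+1,\dots,S_a\}$ coincides with $C_a$ and $D(B)$ is just $D(A)$ with vertices relabelled according to $\sigma$.

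Next I would check that $B$ is an $(n_1,\dots,n_r)$-upper triangular blocked matrix in the sense of \S 2. If $i\in I_a$ and $i_2\in I_b$ with $a>b$ and $b_{ii_2}\ne 0$, then $D(B)$ has the arc $i\to i_2$, forcing an arc $C_a\to C_b$ in $D^{*}$; but the ordering is topological, so $a<b$, a contradiction. Hence $b_{ii_2}=0$ whenever $i\in I_a$ and $i_2\le S_{a-1}$, which is exactly the upper triangular blocked condition. Moreover the $a$-th diagonal block is the principal submatrix on $C_a$, and since $C_a$ is strongly connected this block is irreducible; thus $B$ is a normal upper triangular blocked matrix in the sense recalled at the start of \S 6. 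The lower triangular statement then follows either by repeating the argument with a reverse topological ordering, or directly from the reversal permutation of Remark \ref{Rem24}(5).

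I expect the only genuine content to be the acyclicity of the condensation and the resulting existence of a topological ordering; everything else is the routine translation between the arc structure of $D(A)$ and the block-triangular zero pattern. The one point I would state explicitly, rather than leave implicit, is the direction convention: that ``no arc from a later block to an earlier block'' is precisely the defining condition $a_{ii_2}=0$ for $i\in I_j,\ i_2\le S_{j-1}$, so that a topological ordering of $D^{*}$ yields an upper (and not lower) triangular form.
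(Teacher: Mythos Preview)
The paper does not actually supply a proof of Theorem \ref{The61}: it is stated with a citation to \cite{1991} (Brualdi--Ryser) as a classical fact about matrices, and the paper then moves on to generalize it to tensors. So there is no ``paper's own proof'' to compare against here.

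Your argument is the standard and correct one: the Frobenius normal form via the strongly connected components of the associated digraph $D(A)$, acyclicity of the condensation $D^{*}$, and a topological ordering of the components. The translation you give between ``no arc from a later component to an earlier one'' and the defining condition $a_{ii_2}=0$ for $i\in I_j$, $i_2\le S_{j-1}$ is exactly right, and the remark that the reversal permutation of Remark \ref{Rem24}(5) yields the lower triangular version is also fine. Nothing is missing.
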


\begin{The}\label{The62}{\rm(\cite{1991})} Let $A$ be an $(n_1,\cdots,n_r)$ normal upper triangular blocked matrix of order $n$ with
diagonal blocks $A_1,\cdots,A_r$, and $B$ be an $(m_1,\cdots,m_t)$ normal upper triangular blocked matrix of order $n$ with diagonal
blocks $B_1,\cdots,B_t$, respectively. If $A$ and $B$ are permutational similar, then $r=t$, and there exist some permutation $\sigma$:
$[r]\rightarrow [r]$, such that $A_i$ and $B_{\sigma(i)}$ are permutational similar for $i=1,\cdots,r$.\end{The}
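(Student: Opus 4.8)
The plan is to pass from the matrices to the directed graphs of their zero--nonzero patterns and to read off the block structure as the strongly connected components (SCCs) of those graphs. For a matrix $A$ of order $n$, let $D(A)$ be the digraph on the vertex set $[n]$ with an arc $i\to j$ whenever $a_{ij}\neq 0$ and $i\neq j$. In the matrix case ($m=2$), $A$ is irreducible in the sense of Definition \ref{Def11}(1) exactly when $D(A)$ is strongly connected, i.e. every vertex reaches every other. If $B=PAP^T$ with $P$ the permutation matrix of a bijection $\pi\colon[n]\to[n]$, then $b_{ij}=a_{\pi^{-1}(i)\pi^{-1}(j)}$, so $\pi$ is an isomorphism of the digraphs $D(A)$ and $D(B)$ which, crucially, also preserves the actual entries (not merely the pattern). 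Thus the whole argument reduces to how SCCs behave under such an entry-preserving isomorphism.

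The main step is the lemma: \emph{the SCCs of $D(A)$ are precisely the index sets $I_1,\dots,I_r$ of the diagonal blocks}. I would prove this by two inclusions. Since $A$ is normal, each diagonal block $A_i=A[I_i]$ is irreducible, so the subdigraph induced on $I_i$ is strongly connected; hence each $I_i$ lies inside a single SCC of $D(A)$. Conversely, the upper triangular blocked condition $a_{i i_2}=0$ for $i\in I_j$ and $i_2\le S_{j-1}$ says that no arc runs from a block back to an earlier one, so from a vertex of $I_{j'}$ one can only reach vertices of $I_{j'},I_{j'+1},\dots,I_r$. Therefore if $p\in I_j$ and $q\in I_{j'}$ with $j<j'$, there is never a return path $q\to p$, and $p,q$ cannot share an SCC. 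Combining the two inclusions with the maximality of SCCs, each SCC equals exactly one $I_i$.

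Granting the lemma, the conclusion follows quickly. Applying the same lemma to $B$ shows that the SCCs of $D(B)$ are exactly the block index sets $J_1,\dots,J_t$ of $B$. Because $\pi$ is a digraph isomorphism, it carries the SCCs of $D(A)$ bijectively onto those of $D(B)$; comparing the two families forces $r=t$, and $\pi$ induces a permutation $\sigma\colon[r]\to[r]$ with $\pi(I_i)=J_{\sigma(i)}$ for every $i$. Finally, restricting the identity $b_{ij}=a_{\pi^{-1}(i)\pi^{-1}(j)}$ to indices in $J_{\sigma(i)}$ and using that $\pi|_{I_i}\colon I_i\to J_{\sigma(i)}$ is a bijection, we see that the restricted permutation conjugates $A_i=A[I_i]$ into $B_{\sigma(i)}=B[J_{\sigma(i)}]$; that is, $A_i$ and $B_{\sigma(i)}$ are permutation similar, as required.

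The crux, and the only place that genuinely uses the hypotheses, is the SCC lemma, and within it the ``no back-arc'' half: one must show that the triangular structure keeps distinct diagonal blocks in separate SCCs, which is exactly what prevents the irreducible pieces from fusing. This reachability argument is elementary but is where both the upper triangular condition and the irreducibility of the diagonal blocks are essential; the remainder is routine bookkeeping about isomorphisms preserving connected components and about restricting an entry-preserving permutation to a block.
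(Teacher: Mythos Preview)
The paper does not prove Theorem~\ref{The62} at all; it is quoted as a classical matrix result from \cite{1991} (Brualdi--Ryser) and used only as background. Your proposal is therefore not being compared against any argument in the paper, but it is correct and is essentially the standard proof one finds in \cite{1991}: identify the diagonal block index sets $I_1,\dots,I_r$ with the strongly connected components of the associated digraph $D(A)$, observe that permutation similarity $B=PAP^T$ induces an entry-preserving digraph isomorphism, and conclude by the fact that isomorphisms carry SCCs bijectively to SCCs. Your SCC lemma and its two halves (irreducibility of each $A_i$ forces each $I_i$ to be strongly connected; the upper triangular block condition forbids back-arcs, so distinct $I_j$'s lie in distinct SCCs) are exactly the right ingredients, and the final restriction argument giving $A_i\sim B_{\sigma(i)}$ is clean.
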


Shao et al in \cite{2013SSZ} and Hu et al in \cite{2014HHQ} obtained that every order $m$ dimension $n$ tensor is permutation similar to some normal 2ndUTB tensor (also see Proposition 1 of \cite{2016HQ}), thus generalized Theorem \ref{The61} from matrices to the second type normal upper triangular blocked tensors. In this section, we will show in Theorem \ref{The63} that Theorem \ref{The61} can also be generalized to the third type normal upper triangular blocked tensors. We also give an example to show that not every tensor can be permutational similar to some first type normal upper triangular blocked tensor.

\vskip 0.1cm

First we have the following lemma.

\begin{Lem}\label{Lem61} {\rm(1)} If an order $m$ dimension $n$ tensor $\mathbb{A}$ is $I$-weakly reducible with $|I|=n-k \ (1\le k\le n-1)$.
Then $\mathbb{A}$ is permutation similar to some $(k,n-k)$-UTB (and thus 2ndUTB by Remark \ref{Rem21}) tensor.

\noindent {\rm(2)} If $\mathbb{A}$ is $I$-reducible with $|I|=n-k \ (1\le k\le n-1)$. Then $\mathbb{A}$ is permutation similar to some
$(k,n-k)$-3rdUTB  tensor.\end{Lem}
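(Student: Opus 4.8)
The plan is to reduce both parts to a single observation: the property of being $I$-(weakly-)reducible is invariant under permutation similarity up to relabeling the reducing set, so that a suitably chosen permutation carries $I$ onto the canonical bottom block $I_2=\{k+1,\cdots,n\}$, at which point the defining conditions of $(k,n-k)$-UTB and $(k,n-k)$-3rdUTB tensors are literally the statements of $I_2$-weak-reducibility and $I_2$-reducibility. Thus the lemma is really the precise, quantified form of Remarks \ref{Rem22} and \ref{Rem23}.

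First I would set up the permutation. Since $|I|=n-k$ and $|I_2|=|\{k+1,\cdots,n\}|=n-k$ agree, there is a permutation $\sigma:[n]\to[n]$ with $\sigma(I_2)=I$, equivalently $\sigma^{-1}(I)=I_2$. Let $P$ be the permutation matrix associated with $\sigma$ and set $\mathbb{B}=P\mathbb{A}P^T$; by the entry formula for permutation similar tensors from \cite{2013S}, the indices of $\mathbb{B}$ are simply relabeled by $\sigma$, so that $b_{i_1i_2\cdots i_m}=a_{\sigma(i_1)\sigma(i_2)\cdots\sigma(i_m)}$.

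For part (1), I would verify directly that $\mathbb{B}$ is $I_2$-weakly reducible in the sense of Definition \ref{Def11}(2). Take $i_1\in I_2$ and suppose at least one of $i_2,\cdots,i_m$ lies outside $I_2$; applying $\sigma$ and using $\sigma(I_2)=I$, we get $\sigma(i_1)\in I$ while at least one of $\sigma(i_2),\cdots,\sigma(i_m)$ lies outside $I$. The $I$-weak-reducibility of $\mathbb{A}$ then gives $a_{\sigma(i_1)\cdots\sigma(i_m)}=0$, hence $b_{i_1\cdots i_m}=0$. Since $I_2=\{k+1,\cdots,n\}$, this is exactly condition (\ref{eq26}) of Example \ref{Exa21}, so $\mathbb{B}$ is a $(k,n-k)$-UTB tensor and $\mathbb{A}$ is permutation similar to it. Part (2) is identical in structure: replacing ``at least one of $i_2,\cdots,i_m$ outside $I$'' by ``all of $i_2,\cdots,i_m$ outside $I$'' throughout, the same computation shows $\mathbb{B}$ is $I_2$-reducible, which by Remark \ref{Rem23} (equivalently, by taking $r=2$, $j=2$ in (\ref{eq27})) means $\mathbb{B}$ is a $(k,n-k)$-3rdUTB tensor.

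The only genuine point requiring care is the direction of the index relabeling under permutation similarity, namely that the reducing set of $\mathbb{B}$ comes out as $\sigma^{-1}(I)$ rather than $\sigma(I)$; I would fix this at the outset by demanding $\sigma^{-1}(I)=I_2$ when choosing $\sigma$. Everything else is routine bookkeeping with the permutation, and no computation beyond checking the two set memberships is needed.
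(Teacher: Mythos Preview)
Your proposal is correct and follows essentially the same approach as the paper's own proof: choose a permutation sending $I$ onto the canonical bottom block $\{k+1,\cdots,n\}$ and observe that the permuted tensor satisfies the defining condition of a $(k,n-k)$-UTB (resp.\ 3rdUTB) tensor. Your write-up is in fact more detailed than the paper's, which dispatches the lemma in two sentences; in particular your explicit attention to the direction of the index relabeling under permutation similarity is a useful addition.
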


\begin{proof} Take the permutation $\sigma: [n]\rightarrow [n]$, such that $\sigma(I)=\{k+1,\cdots,n\}$.
Take the permutation matrix $P=P_{\sigma}$. Then the tensor $P\mathbb{A}P^T$ is a $(k,n-k)$-UTB (or 3rdUTB) tensor which
is permutation similar to $\mathbb{A}$.\end{proof}

\begin{The}\label{The63}  Every order $m$ dimension $n$ tensor $\mathbb{A}$ is permutation similar to some normal 3rdUTB tensor.\end{The}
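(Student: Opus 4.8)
The plan is to prove the statement by induction on the dimension $n$, combining the reducibility dichotomy of Definition \ref{Def11}(1) with Lemma \ref{Lem61}(2) and the ``concatenation'' (sufficiency) direction of Theorem \ref{The21}. For $n=1$ the tensor is vacuously irreducible, hence already a normal $3$rdUTB tensor in the trivial case $r=1$, which settles the base of the induction.

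For the inductive step I would assume the result for every dimension strictly smaller than $n$ and take an order $m$ dimension $n$ tensor $\mathbb{A}$. If $\mathbb{A}$ is irreducible, then $\mathbb{A}$ itself is already in normal form (the case $r=1$) and nothing remains to be done. Otherwise $\mathbb{A}$ is $I$-reducible for some proper nonempty subset $I\subset[n]$; writing $|I|=n-k$ with $1\le k\le n-1$, Lemma \ref{Lem61}(2) produces a permutation matrix under which $\mathbb{A}$ becomes a $(k,n-k)$-$3$rdUTB tensor with diagonal blocks $\mathbb{B}=\mathbb{A}[[k]]$ and $\mathbb{C}=\mathbb{A}[\{k+1,\dots,n\}]$, both of dimension strictly less than $n$.

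Next I would apply the induction hypothesis to $\mathbb{B}$ and $\mathbb{C}$ separately, obtaining permutation matrices $P_1$ on $[k]$ and $P_2$ on $\{k+1,\dots,n\}$ such that $P_1\mathbb{B}P_1^T$ is an $(n_1,\dots,n_s)$-$3$rdUTB tensor and $P_2\mathbb{C}P_2^T$ is an $(n_{s+1},\dots,n_r)$-$3$rdUTB tensor, in each case with all diagonal blocks irreducible. Forming the block-diagonal permutation matrix $P=\mathrm{diag}(P_1,P_2)$ fixes the coarse $(k,n-k)$ splitting setwise, so conjugation by $P$ carries the $(k,n-k)$-$3$rdUTB tensor above to another $(k,n-k)$-$3$rdUTB tensor whose two diagonal blocks are now exactly $P_1\mathbb{B}P_1^T$ and $P_2\mathbb{C}P_2^T$. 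I would then invoke the sufficiency part of Theorem \ref{The21} with $t=s$ and $I=[k]$: its condition (1) is the $(k,n-k)$-$3$rdUTB property just arranged, and conditions (2), (3) are precisely the refined block structures of the two diagonal blocks. Theorem \ref{The21} glues these into a single $(n_1,\dots,n_r)$-$3$rdUTB tensor all of whose diagonal blocks are irreducible, i.e. a normal $3$rdUTB tensor in the sense of Definition \ref{Def62}. Since permutation similarity is transitive, composing the permutations exhibits $\mathbb{A}$ as permutation similar to this normal form.

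The reducibility reduction and the transitivity of permutation similarity are routine; the one place needing genuine care is verifying that the block-diagonal permutation $P$ does not disturb the $(k,n-k)$-$3$rdUTB property and that the hypotheses of Theorem \ref{The21} hold verbatim after relabeling. Concretely, I expect the main (though modest) obstacle to be the bookkeeping of the index shifts on $\{k+1,\dots,n\}$ when identifying the diagonal blocks of $P\mathbb{A}P^T$ with $P_1\mathbb{B}P_1^T$ and $P_2\mathbb{C}P_2^T$, so that Theorem \ref{The21}'s concatenation applies cleanly; once that identification is made, the rest follows formally.
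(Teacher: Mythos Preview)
Your proposal is correct and follows essentially the same approach as the paper's proof: induction on $n$, handling the irreducible case trivially, applying Lemma~\ref{Lem61}(2) in the reducible case to get a $(k,n-k)$-3rdUTB structure, applying the induction hypothesis to each diagonal block, and then invoking the sufficiency direction of Theorem~\ref{The21} to concatenate the refined block structures into a single normal 3rdUTB tensor. The paper is slightly less explicit about the block-diagonal permutation $P=\mathrm{diag}(P_1,P_2)$ and the index bookkeeping you flag, but the logical skeleton is identical.
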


\begin{proof} We use induction on $n$. If $\mathbb{A}$ is irreducible, then $\mathbb{A}$ itself is already in the required form.
If $\mathbb{A}$ is reducible, then by Lemma \ref{Lem61} there exists some $k$ with $1\le k\le n-1$ such that $\mathbb{A}$ is permutation
similar to some $(k,n-k)$-3rdUTB tensor $\mathbb{B}$ with two diagonal blocks $\mathbb{B}_1$ and $\mathbb{B}_2$.

\noindent (The following blocked forms for the matrix cases illustrate the ideas of the proof.)

$$\mathbb{A}\sim \mathbb{B}=\left (
\begin{array}{cc}
B_{1}&\ast\\
O&B_2\\
\end{array} \right )
$$
By induction we know that $\mathbb{B}_1$ and $\mathbb{B}_2$
are respectively permutation similar to some normal 3rdUTB tensors $\mathbb{C}$ and $\mathbb{D}$ all of whose diagonal blocks $\mathbb{C}_1,\cdots,\mathbb{C}_p$ and $\mathbb{D}_1,\cdots,\mathbb{D}_q$ are irreducible.

$$\mathbb{B}_1\sim \mathbb{C}=\left (
\begin{array}{ccc}
\mathbb{C}_{1}&\ldots & \ast\\
\vdots &\ddots &\vdots\\
O&\ldots & \mathbb{C}_p\\
\end{array} \right ), \  \   \mathbb{B}_2\sim \mathbb{D}=\left (
\begin{array}{ccc}
\mathbb{D}_{1}&\ldots & \ast\\
\vdots &\ddots &\vdots\\
O&\ldots & \mathbb{D}_q\\
\end{array} \right ) \Longrightarrow \mathbb{B}\sim \left (
\begin{array}{cc}
\mathbb{C}&\ast\\
O&\mathbb{D}\\
\end{array} \right ) := \mathbb{A}^*,
$$
where $\mathbb{A}^*$ defined above is a $(k,n-k)$-3rdUTB tensor with two diagonal blocks $\mathbb{C}$ and $\mathbb{D}$.
By Theorem \ref{The21} we know that the tensor $\mathbb{A}^*$ is a 3rdUTB tensor with the diagonal blocks $\mathbb{C}_1,\cdots,\mathbb{C}_p$ and $\mathbb{D}_1,\cdots,\mathbb{D}_q$ (they are all irreducible), so $\mathbb{A}^*$ is a normal 3rdUTB tensor. But we also have $\mathbb{A}\sim \mathbb{B}\sim \mathbb{A}^*$, thus we obtained the desired result.
\end{proof}

Using the proof similar to that of Theorem \ref{The63}, we can also prove the following result for the second type weakly reducible normal form which was also obtained by Shao et al in \cite{2013SSZ} and Hu et al in \cite{2014HHQ}.

\begin{The}\label{The64}  Every order $m$ dimension $n$ tensor $\mathbb{A}$ is permutation similar to some normal 2ndUTB tensor.\end{The}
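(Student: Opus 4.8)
The plan is to mirror the induction on $n$ used in the proof of Theorem \ref{The63}, replacing ``reducible/irreducible'' by ``weakly reducible/weakly irreducible'' throughout, and replacing every appeal to Theorem \ref{The21} (which merges 3rdUTB blocks) by an appeal to Lemma \ref{Lem22} (which merges 2ndUTB blocks). First I would set up the induction: the base case is when $\mathbb{A}$ is weakly irreducible, in which case $\mathbb{A}$ itself is already in the required form, exactly as in the irreducible base case of Theorem \ref{The63}.

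For the inductive step, suppose $\mathbb{A}$ is weakly reducible. Then by Definition \ref{Def11}(2) there is a proper subset $I$ with $|I|=n-k$ $(1\le k\le n-1)$ such that $\mathbb{A}$ is $I$-weakly reducible, and Lemma \ref{Lem61}(1) gives a permutation similarity $\mathbb{A}\sim\mathbb{B}$ where $\mathbb{B}$ is a $(k,n-k)$-UTB (hence $(k,n-k)$-2ndUTB by Remark \ref{Rem21}) tensor with two diagonal blocks $\mathbb{B}_1$ and $\mathbb{B}_2$. Applying the induction hypothesis to $\mathbb{B}_1$ and $\mathbb{B}_2$, whose dimensions $k$ and $n-k$ are both strictly less than $n$, produces permutation similarities $\mathbb{B}_1\sim\mathbb{C}$ and $\mathbb{B}_2\sim\mathbb{D}$, where $\mathbb{C}$ and $\mathbb{D}$ are normal 2ndUTB tensors with weakly irreducible diagonal blocks $\mathbb{C}_1,\cdots,\mathbb{C}_p$ and $\mathbb{D}_1,\cdots,\mathbb{D}_q$. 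Lifting these back as in the display in the proof of Theorem \ref{The63}, $\mathbb{B}$ is permutation similar to a tensor $\mathbb{A}^*$ that is a $(k,n-k)$-2ndUTB tensor with the two diagonal blocks $\mathbb{C}$ and $\mathbb{D}$.

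The crucial step is then to recognize $\mathbb{A}^*$ as a single normal 2ndUTB tensor over the refined partition. Here I would invoke Lemma \ref{Lem22} with $t=p$: condition (1) is that $\mathbb{A}^*$ is a $(k,n-k)$-2ndUTB tensor with diagonal blocks $\mathbb{C}$ and $\mathbb{D}$, condition (2) is that $\mathbb{C}$ is 2ndUTB over its block sizes $|\mathbb{C}_1|,\cdots,|\mathbb{C}_p|$, and condition (3) is that $\mathbb{D}$ is 2ndUTB over its block sizes $|\mathbb{D}_1|,\cdots,|\mathbb{D}_q|$, and all three hold by construction. Lemma \ref{Lem22} then yields that $\mathbb{A}^*$ is an $(|\mathbb{C}_1|,\cdots,|\mathbb{C}_p|,|\mathbb{D}_1|,\cdots,|\mathbb{D}_q|)$-2ndUTB tensor with diagonal blocks $\mathbb{C}_1,\cdots,\mathbb{C}_p,\mathbb{D}_1,\cdots,\mathbb{D}_q$, each weakly irreducible; hence $\mathbb{A}^*$ is a normal 2ndUTB tensor. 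Since $\mathbb{A}\sim\mathbb{B}\sim\mathbb{A}^*$, the induction closes.

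The step I expect to be the main obstacle is the block-merging step, and the point requiring care is precisely that the full Theorem \ref{The21} does not hold for the second type, since Example \ref{Exa24} shows its necessity part fails for 2ndUTB tensors. The argument goes through only because the merging direction needs just the sufficiency implication, which is supplied by Lemma \ref{Lem22}; I would therefore be careful to cite Lemma \ref{Lem22} rather than Theorem \ref{The21} at this point, and to verify that hypotheses (1)--(3) of Lemma \ref{Lem22} match the construction of $\mathbb{A}^*$ with the value $t=p$.
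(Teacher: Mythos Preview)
Your proposal is correct and matches the paper's approach exactly: the paper itself gives no separate proof of Theorem \ref{The64}, stating only that it follows by the same argument as Theorem \ref{The63}, and you have filled in precisely the details that change, namely replacing Lemma \ref{Lem61}(2) by Lemma \ref{Lem61}(1) and replacing the appeal to Theorem \ref{The21} by Lemma \ref{Lem22}. Your observation that only the sufficiency direction is needed at the merging step, so that the failure of the necessity part of Theorem \ref{The21} for 2ndUTB tensors (Example \ref{Exa24}) is irrelevant, is exactly the point that makes the transfer go through.
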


\vskip 0.1cm
The following example shows that not every tensor can be permutational similar to some first type normal upper triangular blocked tensor.

\begin{Exa}\label{Exa61} Let $\mathbb{A}$ be a $(0,1)$ tensor of order $m=3$ and dimension $n=4$ with the entries
$$a_{ijk}=\left\{
                            \begin{array}{cc}
                              1, &   \mbox{  if  } 1\le i\le 3, \ 4\in \{j,k\};  \\
                              0, &  \mbox{otherwise.}\\
                            \end{array}
                          \right.$$
Then $\mathbb{A}$ is not permutational similar to any first type normal upper triangular blocked tensor.\end{Exa}
\begin{proof} Firstly, $\mathbb{A}$ is weakly reducible since the fourth row of $\mathbb{A}$ is a zero row. Now if $\mathbb{A}$ is
$\sigma$-permutational similar to some first type normal $(n_1,\cdots,n_r)$-upper triangular blocked tensor $\mathbb{B}$ with diagonal blocks
$\mathbb{B}_1,\cdots,\mathbb{B}_r$, then $r\ge 2$. By Lemma \ref{Lem21} we know that $\mathbb{B}$ satisfies the following two conditions
(write $I_1=[n_1]$):

\vskip 0.1cm

\noindent {\rm(i)} $\mathbb{B}[I_1]=\mathbb{B}_1$ is weakly irreducible.

\vskip 0.1cm

\noindent {\rm(ii)} $\mathbb{B}$ is $\overline {I_1}$-weakly reducible.

Now take $I=\sigma (\overline {I_1})$, then we see that $I$ is a proper subset of $[n]$ satisfying the following two conditions:

\vskip 0.1cm

\noindent {\rm(1)} $\mathbb{A}[\overline {I}]$ is weakly irreducible.

\vskip 0.1cm

\noindent {\rm(2)} $\mathbb{A}$ is $I$-weakly reducible.

\vskip 0.1cm

Now we consider the following two cases.

\vskip 0.18cm

\noindent {\bf Case 1:} $4\notin I$.

Take any $i\in I$. Then $i\ne 4$, so by definition we have $a_{i44}=1$, contradicting (2) that $\mathbb{A}$ is $I$-weakly reducible.

\noindent {\bf Case 2:} $4\in I$.

\vskip 0.18cm

\noindent {\bf Subcase 2.1:} $|\overline {I}|\ge 2$.

Then $4\in I\Longrightarrow \overline {I}\subseteq \{1,2,3\}$. Thus by definition we have $\mathbb{A}[\overline {I}]=0$, contradicting (1) that
$\mathbb{A}[\overline {I}]$ is weakly irreducible (since $|\overline {I}|\ge 2$).

\noindent {\bf Subcase 2.2:} $|\overline {I}|=1$.

Without loss of generality we may assume that $\overline {I}=\{1\}$. Then by definition we have $a_{214}=1$, contradicting (2)
that $\mathbb{A}$ is $I$-weakly reducible.
\end{proof}

\begin{Rem}\label{Rem61}
Notice that we have not generalized the uniqueness of the normal form for matrices
in Theorem \ref{The62} to all the three types normal UTB tensors. We think that these possible generalizations could be the problems
for further study.\end{Rem}

\end{document}